\documentclass[10pt,reqno]{amsart}

\pdftrailerid{}
\usepackage[T1]{fontenc}
\usepackage{lmodern}
\usepackage[margin=1.08in]{geometry}
\usepackage{microtype}
\usepackage{amsmath,amssymb,mathtools,mathrsfs}
\usepackage{enumitem}
\usepackage[hidelinks]{hyperref}
\hypersetup{
  pdftitle={Sharp Deficiency Bounds for Meromorphic Functions in the Unit Disc},
  pdfauthor={Sina Nadi},
  pdfsubject={Nevanlinna Theory in the Unit Disc},
  pdfcreator={},
  pdfproducer={},
  pdfkeywords={},
}

\setlist[enumerate]{label=(\roman*),leftmargin=2.15em,itemsep=0.15em,topsep=0.35em}
\setlist[itemize]{leftmargin=1.8em,itemsep=0.15em,topsep=0.35em}
\numberwithin{equation}{section}

\newtheorem{theorem}{Theorem}[section]
\newtheorem{lemma}[theorem]{Lemma}
\newtheorem{proposition}[theorem]{Proposition}
\newtheorem{corollary}[theorem]{Corollary}

\newcommand{\C}{\mathbb C}
\newcommand{\D}{\mathbb D}
\newcommand{\Pone}{\mathbb P^1}
\newcommand{\Nbar}{\overline N}
\newcommand{\dd}{\,\mathrm d}
\newcommand{\Real}[1]{\mathit{Re}\{#1\}}
\newcommand{\Imag}[1]{\mathit{Im}\{#1\}}

\newcommand{\Ram}{\mathrm{ram}}
\newcommand{\eps}{\varepsilon}
\newcommand{\lemref}[1]{\hyperref[#1]{Lemma~\ref*{#1}}}
\newcommand{\propref}[1]{\hyperref[#1]{Proposition~\ref*{#1}}}
\newcommand{\thmref}[1]{\hyperref[#1]{Theorem~\ref*{#1}}}
\newcommand{\corref}[1]{\hyperref[#1]{Corollary~\ref*{#1}}}
\newcommand{\secref}[1]{\hyperref[#1]{Section~\ref*{#1}}}

\makeatletter
\def\@secnumfont{\bfseries}
\def\section{\@startsection{section}{1}%
  \z@{.7\linespacing\@plus\linespacing}{.5\linespacing}%
  {\normalfont\bfseries\centering}}
\makeatother

\title[Sharp Deficiency Bounds in the Unit Disc]
{Sharp Deficiency Bounds for Meromorphic Functions in the Unit Disc}
\author[Sina Nadi]{Sina Nadi}
\date{}

\begin{document}

\begin{abstract}
In 1986, Shea and Sons obtained the following bound for a meromorphic
function $f$ of finite order $\rho$ in the unit disc, under the hypothesis
$0<\lambda(f)\leq+\infty$, and for every positive integer $n$:
\[
 \sum_{a\ne\infty}\delta(a,f)
 \leq \delta(0,f^{(n)})\bigl(1+n k(f)\bigr)
       +\frac{2n(\rho+1)}{\lambda(f)}.
\]
Under the condition $0<\alpha(f)\leq+\infty$, they also obtained
\[
 \sum_{a\ne\infty}\delta(a,f)
 \leq \Delta(0,f^{(n)})\bigl(1+n k(f)\bigr)
       +\frac{2n}{\alpha(f)}.
\]
Shea and Sons asked whether the factor $2$ could be eliminated.  We prove that it can.  In fact,
whenever $0<\lambda(f)\leq+\infty$, one has
\[
 \sum_{a\ne\infty}\delta(a,f)
 \leq \delta(0,f^{(n)})\bigl(1+n k(f)\bigr)
       +\frac{n(\rho+1)}{\lambda(f)},
\]
and under the condition $0<\alpha(f)\leq+\infty$ one has
\[
 \sum_{a\ne\infty}\delta(a,f)
 \leq \Delta(0,f^{(n)})\bigl(1+n k(f)\bigr)
       +\frac{n}{\alpha(f)}.
\]
The coefficient $n$ in each of $n(\rho+1)/\lambda(f)$ and $n/\alpha(f)$ is best possible for every $n$, and the dependence on $\rho$ and $\alpha(f)$ in
these terms is also essential.
At the end, we extend both estimates to finite subsets $A$ of an
$n$-dimensional complex vector space $V\subset\C(z)$, with
$f^{(n)}$ replaced by the monic differential operator $D_Vf$ of order $n$
whose kernel is $V$.  We prove that both extensions are sharp for every $V$.
\end{abstract}

\maketitle

\section{Introduction}\label{sec:introduction}

The relation between the value distribution of a meromorphic function and the zeros of its derivatives is a classical theme in Nevanlinna theory.  The present paper concerns deficiency sums of meromorphic functions in the unit disc and their relation to deficiencies of differential operators.

Let
\[
 \D=\{z\in\C:|z|<1\},
 \qquad
 \Pone=\C\cup\{\infty\},
\]
where $\Pone$ is the Riemann sphere.  We use the standard notation $T(r,f)$,
$m(r,a,f)$, $N(r,a,f)$, and $\Nbar(r,a,f)$ of Nevanlinna theory.
See, for example, \cite{CherryYe2001}.  For real $x$, we use the convention
$\log^+x=\log\max\{1,x\}$.  The order of a meromorphic function $f$ in
$\D$ is
\[
 \rho(f)=\limsup_{r\to1^-}
 \frac{\log^+T(r,f)}{\log(1/(1-r))}.
\]
If $T(r,f)>0$ for all sufficiently large $r$, its Nevanlinna deficiency at
$a\in\Pone$ is
\[
 \delta(a,f)=\liminf_{r\to1^-}\frac{m(r,a,f)}{T(r,f)}.
\]
For $a\in\Pone$, the First Main Theorem gives
\[
 T(r,f)=m(r,a,f)+N(r,a,f)+O(1),
\]
where the bounded term is independent of $r$.  Consequently, when
$T(r,f)\to\infty$,
\[
 \delta(a,f)=1-\limsup_{r\to1^-}\frac{N(r,a,f)}{T(r,f)}.
\]
When $T(r,f)>0$ for all sufficiently large $r$, we write
\[
 k(f)=\limsup_{r\to1^-}
 \frac{\Nbar(r,\infty,f)}{T(r,f)+1},
 \qquad
 \lambda(f)=\liminf_{r\to1^-}
 \frac{T(r,f)}{\log(1/(1-r))},
 \qquad
 \alpha(f)=\limsup_{r\to1^-}
 \frac{T(r,f)}{\log(1/(1-r))}.
\]
If $T(r,f)>0$ for all sufficiently large $r$, its Valiron deficiency at
$a\in\Pone$ is
\[
 \Delta(a,f)=\limsup_{r\to1^-}\frac{m(r,a,f)}{T(r,f)}.
\]
We use the conventions $1/0=+\infty$ and $1/(+\infty)=0$.

A classical theorem of Ullrich states that if $f$ is a non-affine entire
function of finite order in $\C$, then
\[
 \sum_{a\in\C}\delta(a,f)\leq\delta(0,f').
\]
See \cite{Ullrich1929}.  The results below concern the corresponding
deficiency sum for meromorphic functions in $\D$.

In 1983, Sons recorded the following result
\cite[Theorem~2.14]{Sons1983}.  If $f$ is meromorphic in $\D$ and has
finite order and positive lower order, then, for every positive integer $n$,
\begin{equation}\label{eq:Sons-derivative}
 \frac{1}{n+1}\sum_{a\ne\infty}\delta(a,f)
 \leq \delta(0,f^{(n)}).
\end{equation}
If $f$ is analytic in $\D$, then the factor $1/(n+1)$ may be omitted.

In 1986, Shea and Sons proved that a meromorphic function of finite order
$\rho$ in the unit disc with $0<\lambda(f)\leq+\infty$ satisfies
\begin{equation}\label{eq:SheaSons}
 \sum_{a\ne\infty}\delta(a,f)
 \leq
 \delta(0,f')\bigl(1+k(f)\bigr)
 +\frac{2(\rho+1)}{\lambda(f)}.
\end{equation}
For every positive integer $n$, they further proved
\begin{equation}\label{eq:SheaSons-43}
 \sum_{a\ne\infty}\delta(a,f)
 \leq
 \delta(0,f^{(n)})\bigl(1+n k(f)\bigr)
 +\frac{2n(\rho+1)}{\lambda(f)}.
\end{equation}
For a meromorphic function with $0<\alpha(f)\leq+\infty$, they also proved
\begin{equation}\label{eq:SheaSons-42}
 \sum_{a\ne\infty}\delta(a,f)
 \leq
 \Delta(0,f^{(n)})\bigl(1+n k(f)\bigr)
 +\frac{2n}{\alpha(f)}.
\end{equation}
See \cite{SheaSons1986}.  In 1989, Brannan and Hayman recorded the following question of Sons as Problem~1.40 in their collection of research problems in complex analysis \cite{BrannanHayman1989}.  It was later reproduced as Problem~1.40 in the fiftieth-anniversary edition of \emph{Research Problems in Function Theory} \cite{HaymanLingham2019}.

\medskip
\noindent\textbf{Sons' problem.}
Can the factor $2$ in \eqref{eq:SheaSons} be eliminated?

\medskip
The problem states that an affirmative answer would be best possible.

Our first result answers this question.

\begin{theorem}\label{thm:main}
Let $f$ be meromorphic in $\D$ and of finite order $\rho$.  Suppose that
$0<\lambda(f)\leq+\infty$.
\begin{enumerate}[label=\textup{(\alph*)}]
\item One has
\begin{equation}\label{eq:main}
 \sum_{a\ne\infty}\delta(a,f)
 \leq
 \delta(0,f')\bigl(1+k(f)\bigr)
 +\frac{\rho+1}{\lambda(f)}.
\end{equation}
\item The coefficient $1$ of $(\rho+1)/\lambda(f)$ in
\eqref{eq:main} is best possible.
\end{enumerate}
\end{theorem}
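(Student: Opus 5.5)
The plan is to follow the Shea--Sons argument but to arrange the estimates so that only \emph{one} logarithmic-derivative error term of size $(\rho+1)\log\frac{1}{1-r}$ appears, instead of two. Fix distinct finite values $a_1,\dots,a_q$ and put $F=\sum_{j}1/(f-a_j)$. The standard pointwise inequality gives
\[
 \sum_{j=1}^q m(r,a_j,f)\le m\Bigl(r,\frac1{f'}\Bigr)+m\Bigl(r,\sum_j\frac{f'}{f-a_j}\Bigr)+O(1).
\]
The First Main Theorem gives $m(r,1/f')=T(r,f')-N(r,0,f')+O(1)$. The characteristic of $f'$ satisfies
\[
 T(r,f')\le m(r,f)+m(r,f'/f)+N(r,\infty,f)+\Nbar(r,\infty,f).
\]
Chaining these naively produces two error terms, $m(r,f'/f)$ and $m(r,\sum f'/(f-a_j))$, and each is of size $(\rho+1+\eps)\log\frac1{1-r}$; this is exactly the source of the factor $2$.

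To avoid the doubling, I will not estimate the two logarithmic derivatives separately. At each point $re^{i\theta}$, at most one of the events ``$f$ is close to some $a_j$'' and ``$f$ is large'' is relevant, up to a bounded error. The combined quantity
\[
 \sum_j\log^+\frac1{|f-a_j|}+\log^+|f|
\]
can therefore be dominated pointwise by
\[
 \log^+\frac1{|f'|}+\log^+|f'|+\log^+ G+O(1),
 \qquad
 G=\Bigl|\frac{f'}{f}\Bigr|+\sum_j\Bigl|\frac{f'}{f-a_j}\Bigr|.
\]
This produces $T(r,f')$ directly and carries a single error term $m(r,G)$. I then estimate $m(r,G)$ by the Gol'dberg--Grinshtein averaging argument, that is, differentiated Poisson--Jensen followed by concavity of the logarithm, applied to the whole finite family at once. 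This gives
\[
 m(r,G)\le \log^+T(R,f)+\log^+\frac1{R-r}+O(1),
\]
with coefficient $1$ in front of each logarithm. I choose $1-R=(1-r)^{1+\eps}$ in one variant, or $R=r+(1-r)/2$ in the other, and use the finite-order bound $T(R,f)\le (1-R)^{-\rho-\eps}$, which holds for \emph{all} $R$ near $1$. The result is $m(r,G)\le(\rho+1+\eps')\log\frac1{1-r}$ with no exceptional set. The absence of an exceptional set matters, because the deficiencies are defined via $\liminf$ along arbitrary sequences.

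With this in hand, I take a sequence $r_\nu\to1$ along which $m(r_\nu,0,f')/T(r_\nu,f')\to\delta(0,f')$. Dividing by $T(r_\nu,f)$ and using
\[
 T(r,f')\le T(r,f)+\Nbar(r,\infty,f)+\text{(the same single error)}
\]
only inside the $m(r,1/f')=(\delta(0,f')+o(1))T(r,f')$ term, I obtain the bound. The remaining error term contributes at most $(\rho+1)\log\frac1{1-r}/T(r,f)$, and its $\limsup$ is at most $(\rho+1)/\lambda(f)$. This yields \eqref{eq:main} after letting $q\to\infty$ and $\eps\to0$. The delicate bookkeeping here is to ensure that the error entering through $T(r,f')$ is absorbed into the same pointwise bound rather than added a second time. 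I expect this single-error combination to be the main obstacle. My first attempt will be to write $\log^+|f'|\le\log^+|f|+\log^+|f'/f|$ and $\log^+(1/|f'|)$ against $\sum_j\log^+(1/|f-a_j|)$ on complementary arcs, so that only $\log^+ G$ survives.

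For part (b), I will construct $f$ of finite order with $k(f)=0$ (for instance $f$ analytic), $\delta(0,f')=0$, and $\sum\delta(a,f)=(\rho+1)/\lambda(f)$. The natural candidates are functions with $T(r,f)\sim\lambda\log\frac1{1-r}$ (so $\rho=0$) and a single deficient value of deficiency $1/\lambda$. Such an $f$ would come from an infinite product or Blaschke-type construction whose zeros of $f'$ are spread out so that $N(r,0,f')\sim T(r,f')$. I would then test whether the bound is attained; if not, I would adjust to a family of examples in which the ratio tends to the bound.
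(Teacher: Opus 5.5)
Your proposal has three gaps: the combination step in (a) reproduces the doubling, the displayed pointwise inequality is false as written, and (b) contains no example.

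\textbf{The combination step in (a).} Your final step is the Shea--Sons chain, and it does not remove the factor $2$. Along $r_\nu$ you use $M_A(r_\nu)\le m(r_\nu,0,f')+E_1=(\delta(0,f')+o(1))T(r_\nu,f')+E_1$ and then insert $T(r_\nu,f')\le T(r_\nu,f)+\Nbar(r_\nu,\infty,f)+E_2$. The errors add to $E_1+\delta(0,f')E_2$, so you only get the coefficient $1+\delta(0,f')$ in front of $(\rho+1)/\lambda(f)$. It does not help that $E_1$ and $E_2$ are both dominated by $m(r,G)$, because they enter through two separately integrated inequalities. You call this ``the main obstacle'' and leave it open, but it is the whole content of the theorem.

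The missing idea is never to bound $T(r,f')$ from above. Instead take the two single-error estimates
\[
 M_A(r)\le m(r,0,f')+E(r),\qquad
 M_A(r)\le T(r,f)+\Nbar(r,\infty,f)-N(r,0,f')+E(r).
\]
Multiply them by $1-\alpha(r)$ and $\alpha(r)$ respectively, where $\alpha(r)=\min\{m(r,0,f')/T(r,f'),1\}$, and add. Jensen's formula $m(r,0,f')+N(r,0,f')=T(r,f')+O(1)$ cancels $T(r,f')$ and leaves $M_A(r)\le\alpha(r)\bigl(T(r,f)+\Nbar(r,\infty,f)\bigr)+E(r)+O(1)$, with $E$ appearing once. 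Evaluating along your sequence $r_\nu$ then gives the estimate in (a). A case split does the same job: use the first estimate when $T(r,f')\le T(r,f)+\Nbar(r,\infty,f)$ and the second otherwise.

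\textbf{The pointwise inequality.} The second estimate above is where your pointwise idea belongs, but your displayed domination is false: $\log^+|f|$ and $\log^+|f'|$ are on the wrong sides. At a point where $|f|$ is very large and $|f'|\asymp1$, the left side is about $\log|f|$. On the right, $\log^+\frac1{|f'|}$ and $\log^+|f'|$ are $O(1)$, and $G<1$. What your case analysis actually proves, splitting into $f$ near some $a_j$, $|f|$ large, and otherwise, is
\[
 \sum_{j}\log^+\frac1{|f-a_j|}+\log|f'|\le\log^+|f|+\log^+G+O_A(1).
\]
Integrate this using Jensen's formula for $f'$ and $N(r,\infty,f')=N(r,\infty,f)+\Nbar(r,\infty,f)$. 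You get exactly the second estimate above, with $E=m(r,G)$. The paper takes this estimate from the sharp Second Main Theorem of Cherry--Ye, and uses your elementary pointwise route itself in the rational-target section, so this ingredient is salvageable. However, it produces $-N(r,0,f')$, not $T(r,f')$, and it gives the theorem only when combined with the interpolation step.

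\textbf{Part (b).} You describe a search for a function with $\delta(0,f')=0$, $k(f)=0$ and $\sum\delta(a,f)=(\rho+1)/\lambda(f)$, but you exhibit no candidate and verify nothing, so (b) is not proved. The paper's extremal function goes the other way, with $\delta(0,f')=1$. Take the modular covering $h\colon\D\to\Pone\setminus\{0,1,\infty\}$ and $F=ch$ normalized so that $|F'(0)|=1$.
\begin{itemize}
\item By Tsuji's theorem, $T(r,h)=\log\frac1{1-r}+O(1)$, so $\rho(F)=0$ and $\lambda(F)=1$.
\item $F$ has no poles, so $k(F)=0$.
\item $F'$ is zero-free because $h$ is a covering map, so Jensen's formula gives $m(r,0,F')=T(r,F')$, hence $\delta(0,F')=1$.
\item $F$ omits the finite values $0$ and $c$, so $\sum_{a\ne\infty}\delta(a,F)\ge2$.
\end{itemize}
If the coefficient $1$ were replaced by some $\kappa<1$, the right side would be $1+\kappa<2$, which is impossible.
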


The sum in \eqref{eq:main} is the supremum of the sums over finite subsets of
$\C$.

The dependence on $\rho$ is also essential.  \propref{prop:positive-order}
shows that $(\rho+1)/\lambda(f)$ cannot in general be replaced by
$1/\lambda(f)$.

When $\lambda(f)=0$, the term $(\rho+1)/\lambda(f)$ is $+\infty$ and the
formula cannot give any information.  We therefore state
\thmref{thm:main} only in the positive-$\lambda$ regime.  This also avoids undefined deficiency quotients for derivatives of constant or affine
functions.

The extension of defect relations and Second Main Theorems from fixed values to rational or small meromorphic functions has a long history.  Frank and Weissenborn proved a defect relation for rational functions as targets in the plane \cite{FrankWeissenborn1986}.  Steinmetz proved a weak form of the Second Main Theorem for small functions in the plane, without the ramification term,
using a Wronskian differential polynomial \cite{Steinmetz1986}.  Yamanoi
subsequently established the corresponding theorem with counting functions truncated at level one for small functions \cite{Yamanoi2004} and, in a separate paper, for rational functions as targets \cite{Yamanoi2005}.  In 2016, Ciechanowicz proved a unit-disc analogue of Ullrich's theorem for admissible meromorphic functions and finite families of small functions
\cite[Theorem~2.8]{Ciechanowicz2016}.

Our second and third results concern finite-dimensional subspaces of $\C(z)$.  For an
$n$-dimensional complex vector space $V\subset\C(z)$ with $n\geq1$,
\secref{sec:rational-targets} defines the monic differential operator $D_V$ of
order $n$ whose kernel is $V$, as well as the deficiency $\delta(a,f)$ for
$a\in V$.

\begin{theorem}\label{thm:rational-target-extension}
Let $f$ be meromorphic in $\D$, of finite order $\rho$, and suppose that
$0<\lambda(f)\leq+\infty$.  If $V\subset\C(z)$ is an $n$-dimensional complex vector space with $n\geq1$ and $T(r,D_Vf)>0$ for all sufficiently large
$r$, then, for every finite subset $A\subset V$,
\begin{equation}\label{eq:rational-target-extension}
 \sum_{a\in A}\delta(a,f)
 \leq
 \delta(0,D_Vf)\bigl(1+n k(f)\bigr)
 +\frac{n(\rho+1)}{\lambda(f)}.
\end{equation}
\end{theorem}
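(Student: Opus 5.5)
The plan follows the classical Ullrich–Shea–Sons scheme, with two changes: a careful accounting of the pole contribution, and the use of $D_V$ in place of $f^{(n)}$. Write $A=\{a_1,\dots,a_q\}\subset V$ and put
\[
 F(z)=\sum_{j=1}^{q}\frac{1}{f-a_j}.
\]
The first step is a small-functions analogue of the standard lemma on $m(r,F)$. Because the $a_j$ are rational, hence bounded on $\D$ and of characteristic $O(1)$, and the sets where $f$ is close to different $a_j$ are essentially disjoint, one gets
\[
 \sum_j m(r,a_j,f)\le m(r,F)+O(1).
\]
Next write
\[
 F=\frac{1}{D_Vf}\sum_j\frac{D_V(f-a_j)}{f-a_j},
\]
which uses $D_Va_j=0$ since $a_j\in V=\ker D_V$. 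This gives
\[
 m(r,F)\le m\bigl(r,1/D_Vf\bigr)+\sum_j m\Bigl(r,\frac{D_V(f-a_j)}{f-a_j}\Bigr).
\]
Expanding $D_V=\partial^n+c_{n-1}\partial^{n-1}+\dots+c_0$ with rational coefficients (which are bounded near $\partial\D$, apart from finitely many poles inside), each quotient is a combination of $g^{(k)}/g$ with $g=f-a_j$. The logarithmic derivative lemma in the disc then gives $m(r,g^{(k)}/g)\le k(\rho+1+o(1))\log\frac1{1-r}$ outside an exceptional set. This is where the constant $\rho+1$ enters, and the top order $k=n$ dominates.

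The decisive point is to avoid the crude estimate behind the factor $2$. One should \emph{not} bound $m(r,D_Vf/(f-a))$ by a sum over $k\le n$ of logarithmic derivatives of successive derivatives. That is what produced $2n$, since $m(r,f^{(k+1)}/f^{(k)})$ involves $T(r,f^{(k)})$, which can be about $(k+1)T(r,f)$. Instead, $g^{(n)}/g$ should be handled directly via the refined lemma $m(r,g^{(n)}/g)\le n\log^+ T(r,g)+n\log\frac{1}{1-r}+O(1)$ off an exceptional set. Since $T(r,g)\le T(r,f)+O(1)$ and $\log T(r,f)\le(\rho+o(1))\log\frac1{1-r}$, this yields the error $n(\rho+1)\log\frac1{1-r}$ with coefficient exactly $n$.

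Next, use the first main theorem for $D_Vf$:
\[
 m(r,1/D_Vf)=T(r,D_Vf)-N(r,0,D_Vf)+O(1).
\]
Bound $T(r,D_Vf)\le T(r,f)+n\Nbar(r,\infty,f)+n(\rho+1+o(1))\log\frac1{1-r}$. Here $N(r,D_Vf)\le N(r,f)+n\Nbar(r,f)+O(\log\frac1{1-r})$, the last term coming from the finitely many poles of the $c_k$, and $m(r,D_Vf)\le m(r,f)+m(r,D_Vf/f)$. Combining these and dividing by $T(r,f)$, the sum of deficiencies is at most
\[
 \liminf\frac{m(r,1/D_Vf)}{T(r,f)}+\frac{n(\rho+1)}{\lambda(f)}.
\]
One then converts $m(r,1/D_Vf)/T(r,f)$ into $\bigl(m(r,1/D_Vf)/T(r,D_Vf)\bigr)\cdot\bigl(T(r,D_Vf)/T(r,f)\bigr)$. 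Taking $\limsup$ of the second factor gives $1+nk(f)$, while choosing a sequence $r_m\to1$ realizing $\delta(0,D_Vf)$ bounds the first. The hypothesis $T(r,D_Vf)>0$ is what makes that quotient meaningful.

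I expect the main obstacle to be the exceptional sets. The logarithmic derivative estimate holds only off a set $E$ with $\int_E dr/(1-r)<\infty$, and $\delta$ is a $\liminf$, so a sequence realizing $\delta(0,D_Vf)$ must avoid $E$. I would handle this with the standard Borel-type lemma in $\D$: replace $r$ by $r'=r+(1-r)/\log^2\frac{1}{1-r}$, and use finite order to see that $T(r',\cdot)\le(1+o(1))T(r,\cdot)$ along a suitable sequence. Some care is needed, and positivity of $\lambda(f)$ is what guarantees that the $O(\log\frac{1}{1-r})$ terms are comparable to $T(r,f)$.
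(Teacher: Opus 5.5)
There is a genuine gap in your final combination step, and it is exactly the point the theorem is about. Your own bound $T(r,D_Vf)\le T(r,f)+n\Nbar(r,\infty,f)+n(\rho+1+o(1))\log\frac1{1-r}$ gives $\limsup T(r_m,D_Vf)/T(r_m,f)\le 1+nk(f)+n(\rho+1)/\lambda(f)$, not $1+nk(f)$. When $\lambda(f)<\infty$ that error term is comparable to $T(r,f)$, and nothing in your argument makes it $o(T(r,f))$ along $r_m$. Carried through, your scheme gives
\[
 \sum_{a\in A}\delta(a,f)\le\delta(0,D_Vf)\bigl(1+nk(f)\bigr)+\bigl(1+\delta(0,D_Vf)\bigr)\frac{n(\rho+1)}{\lambda(f)},
\]
which is essentially the Shea--Sons factor $2$. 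The logarithmic-derivative error is paid twice. It is paid once in $\log^+\frac1{|g|}\le\log^+\frac1{|D_Vf|}+\log^+\bigl|\frac{D_Vg}{g}\bigr|$ and again in $\log^+|D_Vf|\le\log^+|g|+\log^+\bigl|\frac{D_Vf}{f}\bigr|$ when you bound $T(r,D_Vf)$. The exact identity $\log^+\frac1{|g|}=\log^+|g|+\log\bigl|\frac{D_Vg}{g}\bigr|-\log|D_Vf|$ (with $g=f-a$ and $D_Vg=D_Vf$) pays it only once. Your diagnosis of the source of the $2$ is also off. Iterating logarithmic derivatives is harmless in finite order, because $\log T(p,f^{(k)})=\log T(p,f)+O(1)$; the paper does iterate them.

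The missing idea is to keep two estimates, each with a single error $E(r)=n(\rho+1+\eps)\log\frac1{1-r}$:
\begin{gather*}
 M_A(r)\le m(r,0,D_Vf)+E(r)+O(1),\\
 M_A(r)\le T(r,f)+n\Nbar(r,\infty,f)-N(r,0,D_Vf)+E(r)+O(1).
\end{gather*}
The second comes from the identity above together with Jensen's formula for $D_Vf$. Then take their convex combination with weight $\alpha_V(r)=\min\{m(r,0,D_Vf)/T(r,D_Vf),1\}$. Since $m(r,0,D_Vf)+N(r,0,D_Vf)=T(r,D_Vf)+O(1)$, the cross terms are bounded above. This leaves $M_A(r)\le\alpha_V(r)\bigl(T(r,f)+n\Nbar(r,\infty,f)\bigr)+E(r)+O(1)$, and evaluating along a sequence realizing $\delta(0,D_Vf)$ gives the theorem.

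There is a second loss earlier in your argument. The inequality $m(r,F)\le m(r,1/D_Vf)+\sum_j m\bigl(r,\frac{D_V(f-a_j)}{f-a_j}\bigr)$ contains $q=|A|$ separate error terms, so it yields $qn(\rho+1)/\lambda(f)$. The theorem requires coefficient $n$ for every finite $A$. For $n=1$ the sum collapses to a single logarithmic derivative, which is how \secref{sec:main-proof} proceeds, but for $n\ge2$ it does not, and per-$j$ proximity bounds cannot repair this. What you need is
\[
 \frac1{2\pi}\int_0^{2\pi}\log^+\max_{j}\left|\frac{D_V(f-a_j)}{f-a_j}(re^{i\theta})\right|\dd\theta\le n(\rho+1+\eps)\log\frac1{1-r}+O(1).
\]
This follows from the $L^\gamma$ estimate of \thmref{thm:GG-source}. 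Bound the integral of $\max_{j,k}|g_j^{(k)}/g_j|^{\gamma/k}$ by the sum of the finitely many individual integrals; the number of terms costs only $O(1)$ after taking logarithms. This is \lemref{lem:simultaneous-finite-order}, applied after choosing pointwise the index minimizing $|f-a_j|$.

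Applied with $p=(1+r)/2$ and $\log^+T(p,\cdot)\le(\rho+\eps)\log\frac1{1-r}+O(1)$, this estimate holds for every $r$ near $1$. So no exceptional set arises, and the sequence realizing $\delta(0,D_Vf)$ can be used as it is. Your Borel-type remedy, which picks ``a suitable sequence'', is unnecessary, and it conflicts with having to evaluate along a prescribed sequence. Finally, the poles of the coefficients $c_k$ in $\D$ contribute $O(1)$ to $N(r,\infty,D_Vf)$. An unspecified $O(\log\frac1{1-r})$ there would by itself spoil the constant.
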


For every $n\geq1$ and every $n$-dimensional subspace $V\subset\C(z)$,
equality in \eqref{eq:rational-target-extension} can be attained.  Both terms on the right can be strictly positive,
and the dependence on $\rho$ cannot be omitted.  See
\propref{prop:all-V-nevanlinna-sharpness}.

The assumption $T(r,D_Vf)>0$ for all sufficiently large $r$ is imposed only to ensure that $\delta(0,D_Vf)$ is well defined.

\begin{theorem}\label{thm:valiron-rational-target-extension}
Let $f$ be meromorphic in $\D$ and suppose that
$0<\alpha(f)\leq+\infty$.  If $V\subset\C(z)$ is an $n$-dimensional complex vector space with $n\geq1$ and $T(r,D_Vf)>0$ for all sufficiently large
$r$, then, for every finite subset $A\subset V$,
\begin{equation}\label{eq:valiron-rational-target-extension}
 \sum_{a\in A}\delta(a,f)
 \leq
 \Delta(0,D_Vf)\bigl(1+n k(f)\bigr)
 +\frac{n}{\alpha(f)}.
\end{equation}
\end{theorem}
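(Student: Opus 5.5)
We prove \thmref{thm:valiron-rational-target-extension} by the same auxiliary-function argument used for \thmref{thm:rational-target-extension}. The only changes are that we work along a sequence $r_j\to1^-$ on which $T(r_j,f)/\log(1/(1-r_j))\to\alpha(f)$, and that the logarithmic-derivative error is controlled through $\alpha(f)$ rather than through $\rho$ and $\lambda(f)$.

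\emph{Setup.} Fix distinct $a_1,\dots,a_q\in A$ and put $F=D_Vf$. Since $D_V$ is linear with kernel $V$, we have $D_V(f-a_\nu)=F$ for each $\nu$. Following the standard Ullrich/Shea--Sons device, we set
\[
 \frac{1}{\sum_\nu 1/(f-a_\nu)}\text{-type identity:}\qquad
 \sum_{\nu}\frac{1}{f-a_\nu}=\frac{1}{F}\sum_\nu\frac{D_V(f-a_\nu)}{f-a_\nu}.
\]
Write $D_V=\sum_{k\le n}c_k\,d^k/dz^k$ with $c_k\in\C(z)$. The rational coefficients contribute only $O(\log\frac1{1-r})$ near $|z|=1$, since their poles lie at fixed points. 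Using the usual partition of the circle according to which $a_\nu$ is closest to $f$ (the $a_\nu$ being distinct rational functions), this gives
\[
 \sum_\nu m(r,a_\nu,f)\le m(r,0,F)+\sum_\nu m\Bigl(r,\tfrac{D_V(f-a_\nu)}{f-a_\nu}\Bigr)+O(\log\tfrac1{1-r}).
\]

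\emph{Key steps.} First, we rewrite $m(r,0,F)=T(r,F)-N(r,0,F)+O(1)$. Next, we bound $T(r,F)\le T(r,f)+n\Nbar(r,\infty,f)+S(r)$. This holds because each pole of $f$ of order $p$ gives a pole of $F$ of order at most $p+n$ (up to the fixed poles of the $c_k$), and $m(r,F)\le m(r,f)+S(r)$. Combining the two, we arrive at
\[
 \sum_\nu m(r,a_\nu,f)\le \bigl(T(r,f)+n\Nbar(r,\infty,f)\bigr)\frac{m(r,0,F)}{T(r,F)}+S(r).
\]
This follows by writing $m(r,0,F)=T(r,F)\cdot\bigl(m(r,0,F)/T(r,F)\bigr)$ and bounding $T(r,F)$ above. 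We then divide by $T(r,f)$ and take $\limsup$ along all $r$. The quotient $m(r,0,F)/T(r,F)$ has $\limsup$ equal to $\Delta(0,F)$, and $\Nbar/T\le k(f)+o(1)$. Since $\liminf_r m(r,a_\nu,f)/T(r,f)=\delta(a_\nu,f)$ is at most the value along any sequence, this yields $\sum\delta(a_\nu,f)\le\Delta(0,F)(1+nk(f))+\limsup S(r)/T(r,f)$. One can choose $r$ freely, which is exactly why the Valiron deficiency appears on the right.

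\emph{Main obstacle: the error term.} We must show that along a suitable sequence, $S(r)\le n\log\frac1{1-r}(1+o(1))$, with constant exactly $n$ rather than $2n$. Dividing by $T(r_j,f)\sim\alpha(f)\log\frac1{1-r_j}$ then gives $n/\alpha(f)$. The term $S(r)$ consists of the logarithmic-derivative terms $m(r,(f-a)^{(k)}/(f-a))$ for $k\le n$, together with the $O(\log\frac1{1-r})$ from the coefficients.

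The classical disc lemma gives $m(r,g'/g)\le \log^+T(R,g)+2\log\frac1{R-r}+O(1)$, and it is the factor $2$ here that produced the $2$ in Shea--Sons. We would first replace it by a sharpened form. Iterating along the chain $(f-a)^{(k)}/(f-a)=\prod_{i<k}(f-a)^{(i+1)}/(f-a)^{(i)}$ is wasteful, so instead we would use the Gol'dberg--Grinshtein-type estimate directly for the $k$-th derivative:
\[
 m\Bigl(r,\tfrac{g^{(k)}}{g}\Bigr)\le k\log^+\frac{T(R,g)}{R-r}+O(1).
\]
With $R=r+(1-r)/\log^2 T(r)$, this becomes $k\log\frac1{1-r}+O(\log T(r)+\log\log)$. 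The trouble is that it requires comparing $T(R,f)$ with $T(r,f)$. For the Valiron version we must choose $r_j$ so that $T(R_j)\le T(r_j)^{1+o(1)}$ while keeping $T(r_j)/\log\frac1{1-r_j}$ near $\alpha(f)$. If $\alpha(f)<\infty$ the growth is logarithmic, so $\log T=o(\log\frac1{1-r})$ and any sequence works. If $\alpha(f)=\infty$, we need only $S=o(T)$ along some sequence, which follows from a Borel-type lemma applied on a set of $r$ where the ratio stays unbounded. Because the operator has order $n$, the top-order term carries the worst coefficient, $n$. Combining the sharpened lemma with the partition of the circle is the step where the care lies.
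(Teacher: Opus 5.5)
\textbf{There is a genuine gap: in your scheme the logarithmic-derivative error is counted twice.} Write $M_A(r)=\sum_\nu m(r,a_\nu,f)$. Your first inequality $M_A(r)\le m(r,0,F)+S_1(r)$ carries $S_1=\sum_\nu m\bigl(r,D_V(f-a_\nu)/(f-a_\nu)\bigr)$. You then write $m(r,0,F)=\beta(r)T(r,F)$ and bound $T(r,F)\le T(r,f)+n\Nbar(r,\infty,f)+m(r,F/f)+O(1)$. This adds a second, independent error $\beta(r)\,m(r,D_Vf/f)$, so your $S(r)$ is really $S_1(r)+\beta(r)\,m(r,D_Vf/f)$.

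The required bound $S(r)\le n(1+o(1))\log\frac1{1-r}$ then fails. Take the function of \propref{prop:valiron-sharpness} with $A=\{0\}$. There $S_1(r)=m(r,f^{(n)}/f)=n\log\frac1{1-r}+O(1)$, while $\beta(r)\to\Delta(0,f^{(n)})>0$ as $r\to1^-$. Hence $S(r)=(1+\Delta(0,f^{(n)})+o(1))\,n\log\frac1{1-r}$ for all $r$, and your argument only gives a right-hand side exceeding $1$ by $n\Delta(0,f^{(n)})/\alpha(f)$. The logarithmic-derivative estimate is already attained in this example, so no sharpening of it can help.

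The missing idea is to never bound $T(r,D_Vf)$ from above. On the part of the circle where $g_\nu=f-a_\nu$ is smallest, the paper uses the pointwise identity $\log^+\frac1{|g_\nu|}=\log^+|g_\nu|+\log\bigl|\frac{D_Vf}{g_\nu}\bigr|-\log|D_Vf|$, together with $D_Vf=D_Vg_\nu$ and Jensen's formula for $D_Vf$. This gives $M_A(r)\le T(r,f)+n\Nbar(r,\infty,f)-N(r,0,D_Vf)+\mathcal E_A(r)+O(1)$, with the \emph{same} error $\mathcal E_A$ as in $M_A(r)\le m(r,0,D_Vf)+\mathcal E_A(r)+O(1)$. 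The convex combination with weight $\min\{m(r,0,D_Vf)/T(r,D_Vf),1\}$ and the computation of \lemref{lem:interpolation} then cost only one copy of $\mathcal E_A$; this is \eqref{eq:valiron-interpolated}.

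Even that single copy must be measured correctly. Summing separate proximity estimates over $\nu$ gives the constant $qn$, and summing over the derivative orders $k$ inside $D_Vg/g$ gives $n(n+1)/2$. Your remark that ``the top-order term carries the worst coefficient'' amounts to exchanging $\int\max$ with $\max\int$, which bounds on $\log^+$-means do not permit. The right error is $\mathcal E_A(r)=\frac1{2\pi}\int_0^{2\pi}\log^+\max_\nu|D_Vg_\nu/g_\nu|\dd\theta$, which is what the partition \eqref{eq:target-separation} actually produces. \lemref{lem:simultaneous-finite-order} controls it through the $L^\gamma$ form of the Gol'dberg--Grinshtein estimate (\thmref{thm:GG-source}), using three ingredients:
\begin{itemize}
\item H\"older's inequality along $g^{(j)}/g=\prod_{k<j}g^{(k+1)}/g^{(k)}$;
\item the majorant $Y=\max_{\nu,j}|g_\nu^{(j)}/g_\nu|^{1/j}$, with $|g_\nu^{(j)}/g_\nu|\le1+Y^n$;
\item Jensen's inequality $\frac1{2\pi}\int\log^+Y\le\gamma^{-1}\log\bigl(1+\frac1{2\pi}\int Y^\gamma\bigr)$.
\end{itemize}
With this, the number of functions affects only an additive constant.

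Also, the rational coefficients of $D_V$ contribute $O(1)$, not $O(\log\frac1{1-r})$, to proximity functions, since rational functions have bounded characteristic. An unquantified $O(\log\frac1{1-r})$ would by itself spoil the constant $n$. Your treatment of $\alpha(f)=+\infty$ via an exceptional set matches the paper and is unaffected by these issues, since there every error term is $o(T(r,f))$ along the chosen radii.
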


For every $n\geq1$ and every $n$-dimensional subspace $V\subset\C(z)$,
equality in \eqref{eq:valiron-rational-target-extension} can be attained.
Both terms on the right can be strictly positive, and the term
$n/\alpha(f)$ cannot be omitted.  See
\propref{prop:all-V-valiron-sharpness}.

The positivity assumption on $T(r,D_Vf)$ in this theorem serves only to make
$\Delta(0,D_Vf)$ well defined.

For constant targets and $D_V=D^n$, Theorems~\ref{thm:rational-target-extension}
and~\ref{thm:valiron-rational-target-extension} improve
\eqref{eq:SheaSons-43} and \eqref{eq:SheaSons-42}, respectively, by removing the factor $2$.

\begin{corollary}\label{cor:improved-SheaSons-43}
Let $f$ be meromorphic in $\D$, of finite order $\rho$, and suppose that
$0<\lambda(f)\leq+\infty$.  Then, for every positive integer $n$,
\begin{equation}\label{eq:improved-SheaSons-43}
 \sum_{a\ne\infty}\delta(a,f)
 \leq
 \delta(0,f^{(n)})\bigl(1+n k(f)\bigr)
 +\frac{n(\rho+1)}{\lambda(f)}.
\end{equation}
The estimate is sharp for every $n$, and its dependence on $\rho$ cannot be omitted.
\end{corollary}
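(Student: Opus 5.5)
The inequality \eqref{eq:improved-SheaSons-43} will be read off from \thmref{thm:rational-target-extension} with the choice $V=\{\text{polynomials of degree}<n\}\subset\C(z)$. This is an $n$-dimensional complex vector space. The monic operator of order $n$ with kernel $V$ is $D_V=D^n$, so $D_Vf=f^{(n)}$. Constants lie in $V$, so every finite set $A\subset\C$ is an admissible finite subset of $V$. For constants, the deficiency $\delta(a,f)$ defined in \secref{sec:rational-targets} coincides with the classical one.

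Before applying the theorem, I must verify its hypothesis $T(r,f^{(n)})>0$ for $r$ near $1$. Since $\lambda(f)>0$, $f$ is not a polynomial; in particular $f^{(n)}$ is not identically zero. If $f^{(n)}$ were a nonzero constant, then $f$ would be a polynomial, a contradiction. So $f^{(n)}$ is nonconstant, and $T(r,f^{(n)})>0$ for large $r$ follows because $T(r,g)$ is positive and increasing for nonconstant $g$. (If the convention used is $T(r,g)=\frac{1}{2\pi}\int\log^+|g|+N$, one can instead use that it tends to a positive value or to infinity.) Then \eqref{eq:rational-target-extension} gives the bound for every finite $A\subset\C$. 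Taking the supremum over finite sets, as the paper stipulates for the meaning of the sum, yields \eqref{eq:improved-SheaSons-43}.

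For sharpness and for the necessity of the $\rho$-dependence, I will invoke \propref{prop:all-V-nevanlinna-sharpness} with this particular $V$. That proposition says that for every $n$-dimensional $V$, equality in \eqref{eq:rational-target-extension} is attained and the $\rho$-dependence cannot be omitted. I will check that its extremal functions, specialized to $V$ equal to the polynomials of degree less than $n$, have targets that are constants. This is the one point needing care: the proposition is phrased with targets $A\subset V$, which may contain nonconstant polynomials, whereas the corollary sums only over constants. If its examples use nonconstant targets, I would rescue this by noting that $\delta(a,f)=\delta(0,f-a)$ and that replacing $f$ by $f-p$ with $p\in V$ preserves $f^{(n)}$, $k$, $\lambda$ and $\rho$. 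This moves one target to a constant, though it cannot move several at once. So the fallback is to take the examples built in \propref{prop:all-V-nevanlinna-sharpness} (presumably with a single target, or with constant targets), for which the reduction applies.

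I expect this target-compatibility check to be the only genuine obstacle; everything else is a direct specialization.
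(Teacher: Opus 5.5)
\paragraph{Verdict.} Your overall route is the paper's: specialize \thmref{thm:rational-target-extension} to $V=\operatorname{span}_{\C}\{1,z,\ldots,z^{n-1}\}$ (so $D_V=D^n$), apply it to finite sets of constants, and take the supremum. There is, however, a genuine gap in your verification of the hypothesis $T(r,f^{(n)})>0$ for $r$ near $1$.

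\paragraph{The gap.} In $\D$, with the convention the paper uses (a pole of order $s$ at the origin contributes $s\log r$ to $N(r,\infty,\cdot)$), a nonconstant function need not have eventually positive characteristic. Monotonicity is fine; positivity is not.
\begin{itemize}
\item For $g(z)=1/(2z)$ and $1/2<r<1$, one has $m(r,\infty,g)=0$ and $N(r,\infty,g)=\log r$. Hence $T(r,g)=\log r<0$.
\item For $g=1/z$, one has $T(r,g)\equiv0$.
\item Derivatives are no exception: $f=-1/(2z)$ gives $T(r,f')=2\log r<0$ for $r>2^{-1/2}$.
\end{itemize}
Your parenthetical fallback (that the characteristic tends to a positive value or to $+\infty$) fails on the same example, since there $T(r,g)\to0^-$. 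So nonconstancy of $f^{(n)}$ is not enough. You must use that $T(r,f)\to\infty$.

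\paragraph{The missing argument.} This is the paper's outer-annulus argument. Suppose $T(r_j,f^{(n)})\le0$ along some sequence $r_j\to1^-$, and let $s\ge0$ be the pole order of $f^{(n)}$ at the origin.
\begin{enumerate}
\item A pole of $f^{(n)}$ at some $b\ne0$ would give $N(r_j,\infty,f^{(n)})\ge\log(r_j/|b|)+s\log r_j\to\log(1/|b|)>0$. So the origin is the only possible pole.
\item Then $N(r_j,\infty,f^{(n)})=s\log r_j$, and so $m(r_j,\infty,f^{(n)})\le -s\log r_j\to0$.
\item The function $G=z^sf^{(n)}$ is analytic in $\D$ and satisfies $m(r_j,\infty,G)\le m(r_j,\infty,f^{(n)})\to0$.
\item Since the circular means of the subharmonic function $\log^+|G|$ are nondecreasing, $|G|\le1$ in $\D$. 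Hence $f^{(n)}$ is bounded on $\{1/2\le|z|<1\}$.
\item Integrating $n$ times along paths of bounded length in this annulus shows that $f$ is bounded there. Thus $T(r,f)=O(1)$, contradicting $\lambda(f)>0$.
\end{enumerate}
With this step in place, the rest of your inequality argument goes through.

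\paragraph{The sharpness part.} Your worry about targets here is unfounded. \propref{prop:all-V-nevanlinna-sharpness} allows an arbitrary $a_0\in V$, so you can simply take $a_0=0$. Alternatively, cite \propref{prop:nth-order-sharpness} directly, as the paper does. There $f=P^{-M}$ omits $0$, so $\delta(0,f)=1$, and the right-hand side equals $1$; this forces equality for the full sum over all finite constants. The same computation, $1-n\rho/M<1$, shows that the $\rho$-dependence cannot be dropped.
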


\begin{proof}
Take $V=\operatorname{span}_{\C}\{1,z,\ldots,z^{n-1}\}$ in
\thmref{thm:rational-target-extension}.  Then
$D_V=D^n=d^n/dz^n$.  For $n=1$, this is $D=d/dz$.
The condition $\lambda(f)>0$ implies that $T(r,f)\to+\infty$.  Applying the outer-annulus argument from \secref{sec:main-proof} to $f^{(n)}$ and then integrating $n$ times shows that $T(r,f^{(n)})>0$ for all sufficiently large
$r$.  We may therefore apply
\eqref{eq:rational-target-extension} to finite sets of constant functions and
take the supremum over all such sets.
Proposition~\ref{prop:nth-order-sharpness} proves both sharpness assertions.
Thus \eqref{eq:improved-SheaSons-43} and both additional assertions follow.
\end{proof}

\begin{corollary}\label{cor:improved-SheaSons-42}
Let $f$ be meromorphic in $\D$ and suppose that
$0<\alpha(f)\leq+\infty$.  Then, for every positive integer $n$,
\begin{equation}\label{eq:improved-SheaSons-42}
 \sum_{a\ne\infty}\delta(a,f)
 \leq
 \Delta(0,f^{(n)})\bigl(1+n k(f)\bigr)
 +\frac{n}{\alpha(f)}.
\end{equation}
The estimate is sharp for every $n$, and its dependence on $\alpha(f)$ cannot be omitted.
\end{corollary}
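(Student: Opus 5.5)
The plan is to deduce Corollary~\ref{cor:improved-SheaSons-42} from Theorem~\ref{thm:valiron-rational-target-extension}, in the same way Corollary~\ref{cor:improved-SheaSons-43} was deduced from Theorem~\ref{thm:rational-target-extension}. We take
\[
 V=\operatorname{span}_{\C}\{1,z,\ldots,z^{n-1}\}.
\]
This is an $n$-dimensional subspace of $\C(z)$ containing all constants. The monic operator of order $n$ with kernel $V$ is $D_V=D^n=d^n/dz^n$, since $D^n$ is monic of order $n$ and annihilates exactly the polynomials of degree at most $n-1$. Hence $D_Vf=f^{(n)}$.

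The only hypothesis of Theorem~\ref{thm:valiron-rational-target-extension} that needs checking is that $T(r,f^{(n)})>0$ for all sufficiently large $r$.

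\begin{itemize}
\item Since $\alpha(f)>0$, we have $\limsup_{r\to1^-}T(r,f)=+\infty$. Because $T(r,f)$ is nondecreasing in $r$, this gives $T(r,f)\to+\infty$. In particular $f$ is not a polynomial of degree less than $n$, so $f^{(n)}\not\equiv0$.
\item If $f^{(n)}$ has a pole in $\D$, then $N(r,\infty,f^{(n)})>0$ for all $r$ beyond the modulus of that pole, so $T(r,f^{(n)})>0$ eventually.
\item Otherwise $f^{(n)}$ is analytic in $\D$. If $T(r,f^{(n)})=0$ for some $r_0$, then $\log^+|f^{(n)}|=0$ on $|z|=r$ for all $r\le r_0$. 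So $|f^{(n)}|\le1$ on $|z|\le r_0$, and by monotonicity of $T$ this persists at every smaller radius. Positivity of $T(r,f^{(n)})$ for large $r$ therefore fails only if $|f^{(n)}|\le1$ on all of $\D$.
\item In that case, integrating $n$ times from $0$ shows that $f$ is bounded in $\D$. Then $T(r,f)=O(1)$, contradicting $T(r,f)\to\infty$.
\end{itemize}

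This is the "outer-annulus argument followed by $n$ integrations" already cited in the proof of Corollary~\ref{cor:improved-SheaSons-43}, and I would simply reference it. It is the only genuinely non-formal step, and it is mild.

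With the hypotheses verified, Theorem~\ref{thm:valiron-rational-target-extension} applies to every finite set $A$ of constant functions $a\in\C\subset V$. For such $a$, the deficiency $\delta(a,f)$ defined in Section~\ref{sec:rational-targets} agrees with the classical Nevanlinna deficiency. This should be immediate from the definition there, since for a constant target the proximity and counting functions reduce to the usual $m(r,a,f)$ and $N(r,a,f)$. We therefore obtain
\[
\sum_{a\in A}\delta(a,f)\le\Delta(0,f^{(n)})\bigl(1+nk(f)\bigr)+\frac{n}{\alpha(f)}
\]
for every finite $A\subset\C$. Taking the supremum over finite $A$, which is how the sum over $a\ne\infty$ is interpreted, gives \eqref{eq:improved-SheaSons-42}.

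The two sharpness assertions are not consequences of the theorem and need examples. I would take these from Proposition~\ref{prop:all-V-valiron-sharpness} specialized to $V=\operatorname{span}\{1,\ldots,z^{n-1}\}$, which shows that equality holds with both terms on the right positive and that $n/\alpha(f)$ cannot be dropped. I would also check that the examples there use constant targets, or else invoke the constant-target sharpness proposition for $f^{(n)}$ (the Valiron analogue of Proposition~\ref{prop:nth-order-sharpness}).

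The main obstacle is making sure the extremal examples are genuinely constant-target examples and that $n/\alpha(f)$ is attained for each $n$. I expect this to follow from functions like $\exp$ of a power of $1/(1-z)$, or from finite products attaining a prescribed $\alpha$, so that the bound is essential.
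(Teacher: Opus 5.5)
Your proposal matches the paper's proof. You specialize \thmref{thm:valiron-rational-target-extension} to $V=\operatorname{span}_{\C}\{1,z,\ldots,z^{n-1}\}$, check that $T(r,f^{(n)})>0$ eventually, apply the theorem to finite sets of constants and take the supremum, and take sharpness from an extremal example. The paper cites \propref{prop:valiron-sharpness} for sharpness. \propref{prop:all-V-valiron-sharpness} with $a_0=0$ works equally well. Your speculative alternatives would not work: $\exp\bigl((1-z)^{-\mu}\bigr)$ has either bounded characteristic or positive order, so $\alpha\in\{0,+\infty\}$, and finite products are rational.

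One correction to your own positivity sketch. A pole of $f^{(n)}$ of order $s$ at the origin contributes $s\log r<0$ to $N(r,\infty,f^{(n)})$, so your second bullet is false in that case. Your third bullet also does not cover the case where the origin is the only pole, because integrating from $0$ fails when $f$ itself has a pole there. This is exactly why the paper passes to $z^sf^{(n)}$ and integrates only in the annulus $\{1/2\leq|z|<1\}$. Since you defer to that argument, the proof stands.
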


\begin{proof}
As in the preceding proof, take
$V=\operatorname{span}_{\C}\{1,z,\ldots,z^{n-1}\}$.  Since
$\alpha(f)>0$ implies $T(r,f)\to+\infty$, the same outer-annulus argument
makes $T(r,f^{(n)})>0$ for all sufficiently large $r$.  Apply
\thmref{thm:valiron-rational-target-extension} to finite sets of constant functions and take the supremum.
Proposition~\ref{prop:valiron-sharpness} proves both sharpness assertions.
Thus \eqref{eq:improved-SheaSons-42} and both additional assertions follow.
\end{proof}

\medskip

We nevertheless prove \thmref{thm:main} first because it is the problem under consideration and its proof isolates the first-order argument before the Wronskian and higher-order estimates required in
\secref{sec:rational-targets} are introduced.  Theorem~\ref{thm:main} is
proved in \secref{sec:main-proof}, and
Theorem~\ref{thm:rational-target-extension} in
\secref{sec:rational-targets}.  The proof of
Theorem~\ref{thm:valiron-rational-target-extension} is given in
\secref{sec:valiron-proof}.
The sharpness assertions for arbitrary rational kernels are proved in
Appendix~\ref{sec:kernelwise-sharpness}.

\section*{Acknowledgments}
We are deeply grateful to Alexandre Eremenko for the time and care
he devoted to reading the manuscript and for his valuable comments
and suggestions. We also thank David Drasin for his helpful feedback.

\section{Proof of Theorem~\ref{thm:main}}\label{sec:main-proof}

The hypothesis $\lambda(f)>0$ implies that $T(r,f)\to\infty$ as $r\to1^-$.
It also implies that $f'$ is not constant.  Indeed, a function with constant derivative is affine and has bounded characteristic in $\D$.  We claim that
$T(r,f')>0$ for all sufficiently large $r$.  Suppose otherwise.  Then there is a sequence $r_j\to1^-$ such that $T(r_j,f')\leq0$.  A pole of $f'$ away from
the origin would contribute a positive amount to $N(r_j,\infty,f')$ for all sufficiently large $j$, whereas the possible contribution at the origin tends to zero.  Hence the origin is the only possible pole of $f'$.  Let $s\geq0$ be
its order, with $s=0$ if $f'$ is analytic at the origin.  Then
\[
 N(r_j,\infty,f')=s\log r_j,
 \qquad
 0\leq m(r_j,\infty,f')\leq-s\log r_j\longrightarrow0.
\]
The function $G(z)=z^sf'(z)$ is analytic in $\D$, and
$m(r_j,\infty,G)\leq m(r_j,\infty,f')\to0$.  Since the circular means of the
subharmonic function $\log^+|G|$ are nondecreasing, it follows that
$|G|\leq1$ in $\D$.  Thus $f'$ is bounded on $\{1/2\leq|z|<1\}$.
Integrating along paths of uniformly bounded length in this annulus shows that
$f$ is bounded there.  Since $f$ has only finitely many poles in
$\{|z|<1/2\}$, it follows that $T(r,f)=O(1)$ as $r\to1^-$, contradicting
$\lambda(f)>0$.

Fix a finite set of distinct finite values
\[
 A=\{a_1,\ldots,a_q\}\subset\C
\]
and put
\[
 M_A(r)=\sum_{j=1}^q m(r,a_j,f).
\]
If $q<2$, we add auxiliary finite values.  An upper estimate for the enlarged
sum also gives an upper estimate for $M_A$.

Let
\[
 m_1(r)=m(r,0,f'),
 \qquad N_1(r)=N(r,0,f').
\]
The ramification counting function satisfies
\begin{equation}\label{eq:ramification}
 N_{\Ram}(r,f)
 =N_1(r)+N(r,\infty,f)-\Nbar(r,\infty,f).
\end{equation}
Indeed, a pole of $f$ of order $p$ contributes $p-1$ to the ramification
divisor.

\begin{lemma}\label{lem:interpolation}
Let
\[
 U(r)=T(r,f')
\]
and, for all sufficiently large $r$,
\[
 \alpha(r)=\min\left\{\frac{m_1(r)}{U(r)},1\right\}.
\]
Let $E$ be a real-valued function defined for all sufficiently large $r$, and
suppose that
\begin{align}
 M_A(r)&\leq m_1(r)+E(r),\label{eq:first-general}\\
 M_A(r)&\leq T(r,f)+\Nbar(r,\infty,f)-N_1(r)+E(r).
 \label{eq:second-general}
\end{align}
Then
\begin{equation}\label{eq:interpolated}
 M_A(r)
 \leq
 \alpha(r)\bigl(T(r,f)+\Nbar(r,\infty,f)\bigr)
 +E(r)+O_{f'}(1).
\end{equation}
\end{lemma}

\begin{proof}
Multiply \eqref{eq:first-general} by $1-\alpha(r)$ and
\eqref{eq:second-general} by $\alpha(r)$, and add.  It remains to estimate
\[
 R(r)=m_1(r)-\alpha(r)\bigl(m_1(r)+N_1(r)\bigr).
\]
Jensen's formula gives a constant $c_{f'}$ such that
\begin{equation}\label{eq:Jensen-constant}
 m_1(r)+N_1(r)=U(r)+c_{f'}.
\end{equation}
If $m_1(r)\leq U(r)$, then
\[
 R(r)=-c_{f'}\frac{m_1(r)}{U(r)},
\]
so $R(r)$ is bounded above.  If $m_1(r)>U(r)$, then $\alpha(r)=1$ and
\[
 R(r)=-N_1(r),
\]
which is bounded above for all sufficiently large $r$, since $N_1(r)$ is
nondecreasing.  This proves
\eqref{eq:interpolated}.
\end{proof}

The truncation in the definition of $\alpha$ is needed only to ensure that
$0\leq\alpha(r)\leq1$.  Indeed, \eqref{eq:Jensen-constant} gives
\[
 \frac{m_1(r)}{U(r)}
 =1-\frac{N_1(r)+c_{f'}}{U(r)}.
\]
Since $c_{f'}$ may be negative, the untruncated quotient can exceed $1$.
The truncation therefore guarantees the convex combination required in the proof.

We next state the two external estimates used in the proof.

\begin{theorem}[{\cite[Theorem~3.2.2]{CherryYe2001}}]
\label{thm:GG-source}
Let $0<R\leq+\infty$, let $g\not\equiv0$ be meromorphic in
$D(R)=\{z\in\C:|z|<R\}$, and let $0<\gamma<1$.  Fix $r_0\in(0,R)$.  Write
\[
 g(z)=c z^m(1+O(z))\qquad(z\to0),
\]
where $m=\operatorname{ord}_0g\in\mathbb Z$ and $c\ne0$, and put
\[
 \beta_1(g,r_0)
 =|m|\log^+\frac1{r_0}+\bigl|\log|c|\bigr|+\log2.
\]
Then, for $r_0<r<p<R$,
\begin{align}
 \frac1{2\pi}\int_0^{2\pi}
 \left|\frac{g'(re^{i\theta})}{g(re^{i\theta})}\right|^\gamma
 \dd\theta
 &\leq C_{\rm gg}(\gamma)
 \left(\frac{p}{r(p-r)}\right)^\gamma
 \bigl(2T(p,g)+\beta_1(g,r_0)\bigr)^\gamma,
 \label{eq:GG-Lgamma}
\end{align}
where
\[
 C_{\rm gg}(\gamma)
 =2^\gamma+\bigl(8+2\cdot2^\gamma\bigr)
 \sec\frac{\gamma\pi}{2}.
\]
\end{theorem}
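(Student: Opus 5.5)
The plan is the Gol'dberg--Grinshtein argument: differentiate the Poisson--Jensen formula, estimate the smooth (boundary integral) part and the discrete (zeros and poles) part separately in $L^\gamma$, and combine them using $(x+y)^\gamma\le x^\gamma+y^\gamma$ for $0<\gamma<1$.

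\emph{Setup.} Fix $r_0<r<p<R$ and choose an intermediate radius $r<\varrho<p$, say $\varrho=(r+p)/2$, or $\varrho$ defined by $\varrho-r=p-\varrho$ up to a harmless constant. For $|z|=r$, differentiating the Poisson--Jensen formula on $|\zeta|=\varrho$ gives
\[
 \frac{g'(z)}{g(z)}
 =\frac1{2\pi}\int_0^{2\pi}\log|g(\varrho e^{i\varphi})|
 \frac{2\varrho e^{i\varphi}}{(\varrho e^{i\varphi}-z)^2}\dd\varphi
 +\sum_{|a_k|<\varrho}\epsilon_k\Bigl(\frac1{z-a_k}
 +\frac{\overline{a_k}}{\varrho^2-\overline{a_k}z}\Bigr),
\]
where the $a_k$ run over the zeros and poles of $g$, counted with multiplicity, and $\epsilon_k=\pm1$. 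Because $0<\gamma<1$, the map $x\mapsto x^\gamma$ is subadditive, so it suffices to bound the $L^\gamma$-mean on $|z|=r$ of each of the two pieces.

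\emph{Integral term.} Take absolute values, average in $\theta$, and use Fubini together with
\[
 \frac1{2\pi}\int_0^{2\pi}\frac{\dd\theta}{|\varrho e^{i\varphi}-re^{i\theta}|^2}=\frac1{\varrho^2-r^2}.
\]
This gives an $L^1$-mean of at most
\[
 \frac{2\varrho}{\varrho^2-r^2}\cdot\frac1{2\pi}\int\bigl|\log|g(\varrho e^{i\varphi})|\bigr|\dd\varphi
 =\frac{2\varrho}{\varrho^2-r^2}\bigl(m(\varrho,g)+m(\varrho,1/g)\bigr).
\]
The $L^\gamma$-mean then follows from H\"older (Jensen's inequality for the concave function $x^\gamma$). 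Next apply the First Main Theorem with the normalisation at the origin. Writing $g=cz^m(1+O(z))$, Jensen's formula gives
\[
 m(\varrho,g)+m(\varrho,1/g)\le 2T(\varrho,g)+\bigl|\log|c|\bigr|+|m|\log^+\tfrac1{r_0}.
\]
The $\log 2$ in $\beta_1(g,r_0)$ absorbs the slack from comparing $\varrho$ with $r_0$. Finally, monotonicity gives $T(\varrho,g)\le T(p,g)$, and $\varrho/(\varrho^2-r^2)\lesssim p/(r(p-r))$.

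\emph{Discrete term.} This is the step I expect to be the main obstacle. Summing the $\gamma$-th powers of the $n$ individual terms would give a bound linear in $n$, which is not the claimed form $(n/r)^\gamma$. The better route is to treat $S(z)=\sum_k\epsilon_k/(z-a_k)$ as a Cauchy transform of a signed measure of total mass $n=n(\varrho,g)+n(\varrho,1/g)$, restricted to the circle $|z|=r$. A Boole/Kolmogorov-type weak-$L^1$ estimate should give
\[
 |\{\theta: |S(re^{i\theta})|>t\}|\le C\,\frac{n}{rt}.
\]
Integrating this distribution function then yields
\[
 \frac1{2\pi}\int|S|^\gamma\dd\theta\le C'(\gamma)\Bigl(\frac nr\Bigr)^\gamma .
\]
The constant should come out as $\sec(\gamma\pi/2)$ times an absolute factor, which is the source of the $\sec\frac{\gamma\pi}{2}$ in $C_{\rm gg}$. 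As a check, the single-point lemma $\frac1{2\pi}\int|re^{i\theta}-a|^{-\gamma}\dd\theta\le\sec(\gamma\pi/2)\,r^{-\gamma}$ reproduces exactly this constant in the case $n=1$. The reflected terms $\overline{a_k}/(\varrho^2-\overline{a_k}z)$ are bounded by $1/(\varrho-r)$ each, so they contribute at most $(n/(\varrho-r))^\gamma$.

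\emph{Counting and assembly.} Bound the number of zeros and poles by
\[
 n\le\frac{N(p,g)+N(p,1/g)-N(\varrho,g)-N(\varrho,1/g)}{\log(p/\varrho)}+|m|
 \lesssim\frac{p}{p-r}\bigl(2T(p,g)+\beta_1(g,r_0)\bigr),
\]
using Jensen's formula once more to control $N$ by $T$ plus the $\beta_1$ constants. Substituting this into the discrete estimate and adding the integral-term estimate gives \eqref{eq:GG-Lgamma}. The explicit constant $2^\gamma+(8+2\cdot2^\gamma)\sec\frac{\gamma\pi}{2}$ is obtained by keeping track of the factors of $2$ that arise from the choice of $\varrho$ and from subadditivity.
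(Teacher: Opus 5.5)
The paper does not prove this statement; it quotes it from Cherry--Ye (the Gol'dberg--Grinshtein estimate). Your overall architecture is the standard one: differentiated Poisson--Jensen on $|\zeta|=\varrho$, Poisson-kernel averaging for the boundary integral, and Jensen's formula to count $n=n(\varrho,g)+n(\varrho,1/g)$. Your handling of the integral term and of the count is fine.

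The genuine gap is your treatment of the reflected terms. You bound each $\bar a_k/(\varrho^2-\bar a_k z)$ pointwise by $1/(\varrho-r)$ and combine this with your count $n\lesssim \frac{p}{p-\varrho}\bigl(2T(p,g)+\beta_1(g,r_0)\bigr)$. That yields a contribution of order
\[
 \left(\frac{p}{(p-\varrho)(\varrho-r)}\right)^{\gamma}\bigl(2T(p,g)+\beta_1(g,r_0)\bigr)^{\gamma},
 \qquad (p-\varrho)(\varrho-r)\le\frac{(p-r)^2}{4},
\]
whatever $\varrho\in(r,p)$ you choose. This exceeds the claimed $\bigl(p/(r(p-r))\bigr)^\gamma$ by a factor of at least $\bigl(4r/(p-r)\bigr)^\gamma$, which is unbounded as $p-r\to0$. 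That is precisely the regime the paper uses. With $p=(1+r)/2$, your bound turns $\log\frac{p}{r(p-r)}=\log\frac1{1-r}+O(1)$ in Corollary~\ref{cor:GG-log} into $2\log\frac1{1-r}+O(1)$. Then \eqref{eq:first-estimate} would carry $\rho+2$ instead of $\rho+1$, and the sharp bound is lost.

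The repair is to write $\frac{\bar a}{\varrho^2-\bar a z}=-\frac{1}{z-a^*}$ with $a^*=\varrho^2/\bar a$, so $|a^*|>\varrho>r$. The reflected terms are then Cauchy kernels of opposite sign at points outside $|z|=r$. They must go into the same $L^\gamma$ estimate as the $1/(z-a_k)$, with total mass $2n$, rather than being bounded separately.

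That $L^\gamma$ estimate is the heart of the theorem, and you only assert it via a ``Boole/Kolmogorov-type'' weak-$L^1$ bound. Its proof is also where $\sec\frac{\gamma\pi}{2}$ comes from. Split the $2n$ points by sign and by side of $|z|=r$.
\begin{itemize}
\item For $n_i$ same-sign points $b_k$ inside, $\Phi(z)=\sum_k z/(z-b_k)$ is holomorphic on $|z|\ge r$ including $\infty$, with $\operatorname{Re}\Phi>n_i/2$ and $\Phi(\infty)=n_i$. Since $|\Phi|^\gamma\le\sec\frac{\gamma\pi}{2}\operatorname{Re}\Phi^\gamma$ and $\operatorname{Re}\Phi^\gamma$ is harmonic there, the mean of $\bigl|\sum_k 1/(z-b_k)\bigr|^\gamma$ over $|z|=r$ is at most $\sec\frac{\gamma\pi}{2}\,(n_i/r)^\gamma$.
\item For points outside, write $\sum_k z/(z-b_k)=n_i-\Psi$ with $\Psi=\sum_k b_k/(b_k-z)$. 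Here $\Psi$ is holomorphic on $|z|\le r$, $\operatorname{Re}\Psi>n_i/2$, and $\Psi(0)=n_i$, so the same argument applies.
\end{itemize}
Subadditivity of $x^\gamma$ then combines the four groups.

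Your layer-cake route from a weak-type bound would give the right shape with some unspecified $C'(\gamma)$. That is enough for the paper's use through Corollary~\ref{cor:GG-log}. It does not, however, produce the explicit constant $C_{\rm gg}(\gamma)$ asserted in the statement, and ``keeping track of factors of $2$'' does not bridge that.
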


\begin{corollary}\label{cor:GG-log}
Let $g\not\equiv0$ be meromorphic in $\D$, and fix $r_0\in(0,1)$.
Then, for $r_0<r<p<1$,
\begin{equation}\label{eq:GG}
 m\!\left(r,\frac{g'}g\right)
 \leq
 \log^+T(p,g)+\log\frac{p}{r(p-r)}+O_{g,r_0}(1),
\end{equation}
where the bounded term is independent of $r$ and $p$.
\end{corollary}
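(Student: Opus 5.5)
The plan is to derive Corollary~\ref{cor:GG-log} from the $L^\gamma$ estimate \eqref{eq:GG-Lgamma} of Theorem~\ref{thm:GG-source}, taken with $R=1$ and a fixed exponent, say $\gamma=1/2$. The bridge between the two is Jensen's inequality for the concave function $\log$. Write $h=|g'/g|$ on the circle $|z|=r$. Since $\log^+x=\frac1\gamma\log^+x^\gamma$ and $\log^+x^\gamma\leq\log(1+x^\gamma)$, concavity gives
\[
 m\!\left(r,\frac{g'}g\right)
 \leq\frac1\gamma\cdot\frac1{2\pi}\int_0^{2\pi}\log\bigl(1+h^\gamma\bigr)\dd\theta
 \leq\frac1\gamma\log\Bigl(1+\frac1{2\pi}\int_0^{2\pi}h^\gamma\dd\theta\Bigr).
\]
The integral on the right is finite, because $g'/g$ has only simple poles and so $h^\gamma$ is integrable for $\gamma<1$.

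Next I will insert \eqref{eq:GG-Lgamma}. Put $X=\frac{p}{r(p-r)}\bigl(2T(p,g)+\beta_1\bigr)$, where $\beta_1=\beta_1(g,r_0)$ depends only on $g$ and $r_0$. Then the mean of $h^\gamma$ is at most $C_{\rm gg}(\gamma)X^\gamma$, and
\[
 \log\bigl(1+C_{\rm gg}X^\gamma\bigr)\leq\log\bigl(1+C_{\rm gg}\bigr)+\gamma\log^+X.
\]
This holds because $1+CX^\gamma\leq(1+C)\max\{1,X\}^\gamma$. Dividing by $\gamma$, we obtain
\[
 m\!\left(r,\frac{g'}g\right)\leq\log^+X+\frac1\gamma\log(1+C_{\rm gg}).
\]

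It remains to split $\log^+X$. Since $p/(r(p-r))>1/(p-r)>1$, we have
\[
 \log^+X\leq\log\frac{p}{r(p-r)}+\log^+\bigl(2T(p,g)+\beta_1\bigr).
\]
The last term needs some care, because $T(p,g)$ may be small or even negative (the paper's $T$ allows a $\log r$ contribution from the origin). For $p>r_0$ we have $T(p,g)\geq -C$ for some constant $C$ depending only on $g$ and $r_0$. Hence $2T+\beta_1\leq 2T^+ +\beta_1$, where $T^+=\max\{T,0\}$, and by the elementary bound $\log^+(a+b)\leq\log^+a+\log^+b+\log2$,
\[
 \log^+\bigl(2T(p,g)+\beta_1\bigr)\leq\log^+T(p,g)+\log^+\beta_1+2\log 2.
\]
All the constants here depend only on $g$ and $r_0$ (with $\gamma$ fixed), and none depends on $r$ or $p$. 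This gives \eqref{eq:GG}.

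The only step that needs attention is the final bookkeeping: converting $2T+\beta_1$ into $\log^+T$ plus a constant that is uniform in $r$ and $p$, while handling possibly nonpositive values of $T$. The rest is routine.
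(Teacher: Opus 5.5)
Your proposal is correct and follows the paper's own argument. You take $\gamma=1/2$, use $\log^+x\leq\gamma^{-1}\log(1+x^\gamma)$ with Jensen's inequality, and insert \eqref{eq:GG-Lgamma}, spelling out the elementary bookkeeping (splitting $\log^+X$ and absorbing $C_{\rm gg}$ and $\beta_1$ into the bounded term) that the paper leaves implicit; the lower bound $T(p,g)\geq -C$ you invoke is not actually needed, since $2T+\beta_1\leq 2T^++\beta_1$ holds trivially.
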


\begin{proof}
Set $\gamma=1/2$.  Since
$\log^+x\leq\gamma^{-1}\log(1+x^\gamma)$, Jensen's inequality and
\thmref{thm:GG-source} give
\[
 m\!\left(r,\frac{g'}g\right)
 \leq \frac1\gamma\log\left(
 1+\frac1{2\pi}\int_0^{2\pi}
 \left|\frac{g'(re^{i\theta})}{g(re^{i\theta})}\right|^\gamma
 \dd\theta\right),
\]
and \eqref{eq:GG} follows from \eqref{eq:GG-Lgamma}.
\end{proof}

\begin{theorem}[{\cite[Theorem~4.2.1\textup{(1)}]{CherryYe2001}}]
\label{thm:two-radius-SMT}
Let $b_1,\ldots,b_s\in\Pone$ be distinct, with at least two of them
finite, and let $g$ be a nonconstant meromorphic function in $\D$.
Fix $r_0\in(0,1)$ such that $T(r_0,g)>0$.  There is a constant
$C=C(g,b_1,\ldots,b_s,r_0)$ such that, for $r_0<r<p<1$,
\begin{align}
 (s-2)T(r,g)-\sum_{j=1}^sN(r,b_j,g)+N_{\Ram}(r,g)
 &\leq \log T(p,g)+\log\frac{p}{r(p-r)}+C.
 \label{eq:two-radius-SMT-counting}
\end{align}
Here $N_{\Ram}(r,g)$ is the ramification counting function of $g$.
\end{theorem}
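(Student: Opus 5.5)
The plan is to follow the classical Nevanlinna route to the Second Main Theorem, organised so that every logarithmic-derivative quantity is gathered into one function before \thmref{thm:GG-source} is applied. This is what will give $\log T(p,g)+\log\frac{p}{r(p-r)}$ with coefficient $1$. By the First Main Theorem, $T(r,g)-N(r,b_j,g)=m(r,b_j,g)+O(1)$ for each $j$. Hence the left side of \eqref{eq:two-radius-SMT-counting} equals
\[
 \sum_{j=1}^s m(r,b_j,g)-2T(r,g)+N_{\Ram}(r,g)+O(1).
\]
Since all $m(r,b_j,g)\geq0$, we may enlarge the list to contain $\infty$; this only increases the left side. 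Write $b_1,\dots,b_q$ for the finite targets, $q\geq2$.

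\textbf{Step 1 (finite targets).} Put $F=\sum_{j\le q}1/(g-b_j)$ and $\eta=\tfrac13\min_{i\ne j}|b_i-b_j|$. On the set where $|g-b_j|<\eta$, every other term of $F$ is bounded by $1/(2\eta)$. The standard separation argument therefore gives
\[
 \sum_{j\le q} m(r,b_j,g)\leq m(r,F)+O(1).
\]
Next, $m(r,F)\leq m(r,Fg')+m(r,1/g')$. By Jensen's formula, $m(r,1/g')=T(r,g')-N(r,0,g')+O(1)$. We write $T(r,g')=m(r,g')+N(r,\infty,g')$, use $N(r,\infty,g')=N(r,\infty,g)+\Nbar(r,\infty,g)$, and use $m(r,g')\leq m(r,g)+m(r,g'/g)$. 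Combining these,
\[
 \sum_{j\le q}m(r,b_j,g)+m(r,\infty,g)
 \leq 2T(r,g)-\bigl[N(r,0,g')+2N(r,\infty,g)-N(r,\infty,g')\bigr]+m(r,Fg')+m(r,g'/g)+O(1).
\]
The bracket equals $N_{\Ram}(r,g)$, consistent with \eqref{eq:ramification}. This yields the counting form up to the error $S(r)=m(r,Fg')+m(r,g'/g)$.

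\textbf{Step 2 (the error term, the main point).} Applying \corref{cor:GG-log} separately to each $g-b_j$ and to $g$ would produce $q+1$ copies of $\log T(p,g)$, which is too weak. Instead, note that
\[
 S(r)\leq \frac1{2\pi}\int\log^+\Phi+\log2,
 \qquad
 \Phi=\sum_{j\le q}\Bigl|\frac{g'}{g-b_j}\Bigr|+\Bigl|\frac{g'}{g}\Bigr|.
\]
Take $\gamma=\tfrac12$. By subadditivity of $x\mapsto x^\gamma$, $\Phi^\gamma\leq\sum|g'/(g-b_j)|^\gamma+|g'/g|^\gamma$. Then $\log^+x\leq\gamma^{-1}\log(1+x^\gamma)$ and Jensen's inequality give
\[
 S(r)\leq 2\log\Bigl(1+\sum \tfrac1{2\pi}\int|\cdot|^\gamma\Bigr)+O(1).
\]
We apply \thmref{thm:GG-source} to each of the $q+1$ functions $g-b_j$ and $g$. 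Here $T(p,g-b_j)\leq T(p,g)+\log^+|b_j|+\log2$, and the constants $\beta_1$ depend only on $g$, the $b_j$ and $r_0$. Each integral is then at most $C\,(p/(r(p-r)))^{1/2}(2T(p,g)+\beta)^{1/2}$, so
\[
 S(r)\leq \log\frac{p}{r(p-r)}+\log\bigl(2T(p,g)+\beta\bigr)+O(1).
\]
The factor $2=\gamma^{-1}$ exactly cancels the square roots, which is why a single logarithm survives.

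\textbf{Step 3 (cleanup).} Since $T(p,g)\geq T(r_0,g)>0$ for $p>r_0$ by monotonicity, we have $\log(2T(p,g)+\beta)\leq\log T(p,g)+\log(2+\beta/T(r_0,g))$. Thus $\log T(p,g)$ (not $\log^+$) appears with an additive constant depending only on $g,b_j,r_0$. We also use that $\log\frac{p}{r(p-r)}$ is bounded below on $r_0<r<p<1$, so the $1+\cdots$ inside the logarithm costs only a constant. The expected obstacle is precisely Step 2: keeping the coefficient of both logarithms equal to $1$ uniformly in $r<p$ requires this single combined $L^\gamma$ estimate rather than the termwise \corref{cor:GG-log}.
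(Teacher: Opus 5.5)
The paper gives no proof of this statement; it quotes it from Cherry--Ye, so your argument has to stand on its own. Most of your plan is sound: the First Main Theorem reduction, adjoining $\infty$, a single $L^{1/2}$ application of \thmref{thm:GG-source} to a combined function, and the cleanup via $T(r_0,g)>0$. However, the passage from Step~1 to Step~2 fails.

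The claim $S(r)\le\frac1{2\pi}\int_0^{2\pi}\log^+\Phi\dd\theta+\log2$ is false. The valid inequality is $\log^+(a+b)\le\log^+a+\log^+b+\log2$, and you have used it in reverse. For $a=b$ large, $\log^+a+\log^+b=2\log^+a$, which exceeds $\log^+(a+b)+\log 2$ by about $\log a$.

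The overcount comes from Step~1 itself. Consider points where $g$ stays in a compact set away from the $b_j$ and from $0$, while $|g'|$ is large. There both $\log^+|Fg'|$ and $\log^+|g'/g|$ equal $\log|g'|+O(1)$, so $S(r)$ pays $\log|g'|$ twice, whereas the sharp bound pays it once. Estimating $S(r)$ termwise, or via $S(r)\le2\cdot\frac1{2\pi}\int_0^{2\pi}\log^+\Phi\dd\theta$, gives $2\log T(p,g)+2\log\frac{p}{r(p-r)}$. That is exactly the factor $2$ which the application in \eqref{eq:second-estimate} cannot afford.

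The repair is to combine pointwise before integrating. Take $\eta\in(0,\tfrac12]$ with $3\eta\le\min_{i\ne j}|b_i-b_j|$ and $2\max_j|b_j|\le1/\eta$, and put $\Psi=\sum_{j\le q}|g'/(g-b_j)|$. Then, almost everywhere on $|z|=r$,
\[
 \sum_{j\le q}\log^+\frac1{|g-b_j|}-\log^+|g|
 \le\log\frac1{|g'|}+\log^+\Psi+O(1).
\]
This is checked in three regions.
\begin{enumerate}
\item Near $b_j$, write $\log\frac1{|g-b_j|}=\log\frac1{|g'|}+\log\frac{|g'|}{|g-b_j|}$.
\item Where $|g|>1/\eta$, write $-\log|g|=\log\frac1{|g'|}+\log\frac{|g'|}{|g|}$, and use $|g-b_1|\le2|g|$.
\item Elsewhere the left side is $O(1)$, while the right side is at least $-\log|g-b_1|\ge-O(1)$.
\end{enumerate}
Now integrate, using Jensen's formula $\frac1{2\pi}\int_0^{2\pi}\log\frac1{|g'|}\dd\theta=N(r,\infty,g')-N(r,0,g')+O(1)$ and the identity $N(r,\infty,g')=N(r,\infty,g)+\Nbar(r,\infty,g)$. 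Adding $2m(r,\infty,g)$ to both sides gives
\[
 \sum_{j\le q}m(r,b_j,g)+m(r,\infty,g)
 \le2T(r,g)-N_{\Ram}(r,g)
 +\frac1{2\pi}\int_0^{2\pi}\log^+\Psi\dd\theta+O(1).
\]
Only one logarithmic-derivative term now appears. From here your subadditive $L^{1/2}$ estimate, applied to the $q$ functions $g-b_j$, and your Step~3 go through unchanged. You may keep $\Phi$ in place of $\Psi$, but the term $|g'/g|$ is not needed.
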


We now begin the proof of \thmref{thm:main}.

\begin{proof}[Proof of Theorem~\ref{thm:main}\textup{(a)}]
We use estimates which hold for every sufficiently large radius.  Choose
$r_0\in(0,1)$ so that $T(r_0,f)>0$.

By \thmref{thm:two-radius-SMT} and the First Main Theorem, if
$b_1,\ldots,b_s\in\Pone$ are distinct and at least two are finite, then
\begin{align}
 \sum_{j=1}^s m(r,b_j,f)-2T(r,f)+N_{\Ram}(r,f)
 &\leq
 \log T(p,f)+\log\frac{p}{r(p-r)}+O_{f,\boldsymbol b,r_0}(1),
 \label{eq:two-radius-SMT}
\end{align}
for $r_0<r<p<1$, where $\boldsymbol b=(b_1,\ldots,b_s)$.

Set
\[
 P(w)=\prod_{j=1}^q(w-a_j),
 \qquad H=P(f),
 \qquad Q(w)=\frac{P'(w)}{P(w)}=\sum_{j=1}^q\frac1{w-a_j}.
\]
Choose pairwise disjoint discs centered at the points $a_j$.  In a
sufficiently small disc centered at $a_j$,
\[
 Q(w)=\frac1{w-a_j}+O_A(1),
\]
whereas outside their union the function
\[
 \sum_{j=1}^q\log^+\frac1{|w-a_j|}
\]
is bounded.  It follows that
\[
 \sum_{j=1}^q\log^+\frac1{|w-a_j|}
 \leq \log^+|Q(w)|+O_A(1).
\]
Since $H'/H=f'Q(f)$, integration over $|z|=r$ gives
\begin{equation}\label{eq:finite-values}
 M_A(r)
 \leq
 m(r,0,f')+m\!\left(r,\frac{H'}H\right)+O_A(1).
\end{equation}

Fix $\eps>0$ and set
\[
 p=\frac{1+r}{2}.
\]
By the definition of the order,
\begin{equation}\label{eq:order-estimate}
 \log^+T(p,f)
 \leq
 (\rho+\eps)\log\frac1{1-r}+O_{f,\eps}(1),
\end{equation}
and
\begin{equation}\label{eq:separation-estimate}
 \log\frac{p}{r(p-r)}
 =\log\frac1{1-r}+O(1).
\end{equation}
Moreover,
\[
 T(p,H)=qT(p,f)+O_A(1),
\]
so \eqref{eq:order-estimate} also holds with $H$ in place of $f$, after the bounded term is allowed to depend on $A$.
By \eqref{eq:finite-values} and \corref{cor:GG-log},
\begin{equation}\label{eq:first-estimate}
 M_A(r)
 \leq
 m_1(r)+(\rho+1+\eps)\log\frac1{1-r}
 +O_{f,A,r_0,\eps}(1).
\end{equation}

Apply \eqref{eq:two-radius-SMT} to the values
$a_1,\ldots,a_q,\infty$.  Since
\[
 m(r,\infty,f)=T(r,f)-N(r,\infty,f),
\]
we obtain from \eqref{eq:ramification}
\begin{equation}\label{eq:second-estimate}
 M_A(r)
 \leq
 T(r,f)+\Nbar(r,\infty,f)-N_1(r)
 +(\rho+1+\eps)\log\frac1{1-r}
 +O_{f,A,r_0,\eps}(1).
\end{equation}

Apply \lemref{lem:interpolation} to \eqref{eq:first-estimate} and
\eqref{eq:second-estimate}.  Thus
\begin{equation}\label{eq:main-radius-estimate}
 M_A(r)
 \leq
 \alpha(r)\bigl(T(r,f)+\Nbar(r,\infty,f)\bigr)
 +(\rho+1+\eps)\log\frac1{1-r}
 +O_{f,A,r_0,\eps}(1).
\end{equation}

Choose a sequence $r_n\to1^-$ such that
\[
 \frac{m(r_n,0,f')}{T(r_n,f')}
 \longrightarrow\delta(0,f').
\]
Then
\begin{equation}\label{eq:alpha-limit}
 \alpha(r_n)\longrightarrow\min\{\delta(0,f'),1\}
 \leq\delta(0,f').
\end{equation}
Divide \eqref{eq:main-radius-estimate} by $T(r,f)$ and let $r=r_n$.
Since $T(r,f)\to\infty$,
\[
 \limsup_{r\to1^-}
 \frac{\Nbar(r,\infty,f)}{T(r,f)}=k(f),
\]
where the equality follows from the definition with denominator $T(r,f)+1$.
The definition of $\lambda(f)$ also gives
\[
 \limsup_{r\to1^-}
 \frac{\log(1/(1-r))}{T(r,f)}
 =\frac1{\lambda(f)}.
\]
It follows that
\begin{equation}\label{eq:finite-A-bound}
 \limsup_{n\to\infty}\frac{M_A(r_n)}{T(r_n,f)}
 \leq
 \delta(0,f')\bigl(1+k(f)\bigr)
 +\frac{\rho+1+\eps}{\lambda(f)}.
\end{equation}
For the finite set $A$,
\[
 \sum_{a\in A}\delta(a,f)
 \leq
 \liminf_{r\to1^-}\frac{M_A(r)}{T(r,f)}
 \leq
 \limsup_{n\to\infty}\frac{M_A(r_n)}{T(r_n,f)}.
\]
Combining this with \eqref{eq:finite-A-bound}, letting $\eps\downarrow0$,
and taking the supremum over all finite $A\subset\C$ proves
\eqref{eq:main}.

\end{proof}

This proves part~\textup{(a)}.  We now turn to sharpness.  The construction relies on the following classical growth theorem of Tsuji for the modular covering.

\begin{theorem}[{\cite[Theorem~XI.29]{Tsuji1959}}]
\label{thm:Tsuji-modular}
Let $h\colon\D\to\Pone\setminus\{0,1,\infty\}$ be the modular
covering obtained from the classical modular function by a conformal
identification of the upper half-plane with $\D$.  Then
\[
 T(r,h)=\log\frac1{1-r}+O(1)
 \qquad\left(\frac12\leq r<1\right).
\]
\end{theorem}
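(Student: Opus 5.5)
Since the statement is classical and is cited from \cite{Tsuji1959}, I plan to reprove it with the Ahlfors--Shimizu characteristic and a Green-formula identity, rather than with the Second Main Theorem. The Second Main Theorem alone only gives $T(r,h)\leq(1+o(1))\log\frac1{1-r}$, and it does not produce the matching lower bound with an $O(1)$ error. Write $T_0(r,h)=\int_0^r A(t)\,\frac{\dd t}{t}$, where $A(t)=\frac1\pi\iint_{|z|<t}(h^\#)^2\dd x\dd y$ and $h^\#=|h'|/(1+|h|^2)$. It is standard that $T(r,h)=T_0(r,h)+O(1)$, so it suffices to prove $T_0(r,h)=\log\frac1{1-r}+O(1)$ for $\tfrac12\leq r<1$.

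\textbf{Key identity.} Since $h$ is a universal covering of $\Pone\setminus\{0,1,\infty\}$, it is a local isometry from the Poincar\'e metric $2|dz|/(1-|z|^2)$ to the complete hyperbolic metric $\lambda(w)|dw|$ of the thrice-punctured sphere. Put
\[
 \psi(z)=\log\frac{(1-|z|^2)\,h^\#(z)}{2}.
\]
This function is invariant under the covering group, because it equals $\log$ of the ratio of the spherical density to $\lambda$ at $w=h(z)$. Pick two rough pieces of evidence for the constant: the spherical area of the sphere normalized to $1$ equals the hyperbolic area $2\pi$ of $\Pone\setminus\{0,1,\infty\}$ divided by $2\pi$, and the hyperbolic area of $|z|<t$ is $4\pi t^2/(1-t^2)$. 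I would then compute $\Delta\log h^\#=-2(h^\#)^2$, i.e.\ curvature $+1$ for the spherical metric in suitable normalization, and $\Delta\log\frac{1}{1-|z|^2}=\frac{4}{(1-|z|^2)^2}$. Applying Jensen's formula to the subharmonic-minus-subharmonic function $\log h^\#+\log(1-|z|^2)$ on $|z|=r$, which is legitimate because $h'\neq0$ since $h$ is unramified, should give
\[
 T_0(r,h)=\tfrac12\log\frac1{1-r^2}-\tfrac12\cdot\frac1{2\pi}\int_0^{2\pi}\psi(re^{i\theta})\dd\theta+O(1),
\]
up to tracking the normalizing constants, which must be checked so that the leading coefficient comes out as exactly $1$ in terms of $\log\frac1{1-r}$.

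\textbf{Bounding $\psi$.} It then remains to show that the circle means of $\psi$ are bounded. The upper bound is easy. Since $\lambda(w)\to\infty$ at the three punctures and is positive and continuous elsewhere, the spherical density is at most $C\lambda(w)$ on the whole sphere. Hence $\psi\leq C'$.

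\textbf{Main obstacle.} The lower bound on the circle means is the step I expect to be hardest. Near a cusp, $\lambda(w)\sim 1/(|w|\log(1/|w|))$, so $\psi\approx\log(|w|\log(1/|w|))\to-\infty$ in the horoball neighbourhoods of the parabolic fixed points on $\partial\D$. I would handle this by conjugating each cusp to $\infty$ in the upper half-plane, where $h$ behaves like $e^{i\pi\tau}$ (respectively $16e^{i\pi\tau}$). In that model $\psi$ behaves like $-\pi\,\Imag{\tau}+\log\Imag{\tau}$ on the horoball, so it is of the size of a negative multiple of the horoball ``height''. The task is then to show that the portion of the circle $|z|=r$ lying deep inside horoballs contributes a uniformly bounded amount to $\int\psi\dd\theta$. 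Concretely, I would use the fact that the harmonic measure (angular measure) of the part of $|z|=r$ lying in a horoball of Euclidean size $s$ at height at least $y$ decays like $s\,e^{-cy}$ in the upper-half-plane normalization. Combined with the packing bound $\sum s\lesssim1$ for the disjoint horoballs of $\Gamma(2)$, this shows the negative part of $\psi$ is integrable on circles uniformly in $r$. The conclusion $T(r,h)=\log\frac1{1-r}+O(1)$ would then follow.
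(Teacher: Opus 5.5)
The paper gives no proof of this statement; it is quoted from Tsuji. Your proposal has a genuine gap: it reduces the theorem to an assertion that is false.

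\textbf{The key identity.} Redo the Green--Jensen step with the correct Laplacians, $\Delta\log h^\#=-4(h^\#)^2$ and $\Delta\log\frac{1}{1-|z|^2}=\frac{4}{(1-|z|^2)^2}$. You can test them on $h(z)=z$, where $T_0(r)=\frac12\log(1+r^2)$. Because $h$ is holomorphic with $h'\neq0$, one gets $\frac{1}{2\pi}\int_0^{2\pi}\log h^\#(re^{i\theta})\dd\theta=\log h^\#(0)-2T_0(r,h)$, and hence
\[
 2T_0(r,h)=-\log\frac{1}{1-r^2}-\frac{1}{2\pi}\int_0^{2\pi}\psi(re^{i\theta})\dd\theta+O(1).
\]
The curvature term has the opposite sign from the one in your identity. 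Equivalently, $\Delta\psi=-4(h^\#)^2-4(1-|z|^2)^{-2}<0$. So the circle means of $\psi$ are at most $\psi(0)-\log\frac{1}{1-r^2}$ and tend to $-\infty$, and no lower bound of the kind you plan can hold. The theorem is in fact equivalent to
\[
 \frac{1}{2\pi}\int_0^{2\pi}\psi(re^{i\theta})\dd\theta=-3\log\frac{1}{1-r}+O(1).
\]
Even your version of the identity, with bounded means, would have given $T_0(r,h)=\frac12\log\frac{1}{1-r}+O(1)$. That discrepancy is not a matter of normalizing constants, which are fixed by the definitions and only move the $O(1)$ term.

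\textbf{The cusp estimate.} The cusps therefore carry the main term, and both facts you intended to use there fail. If a horoball $B$ is tangent to $\partial\D$ at $\zeta$, its height function is $c_B(1-|z|^2)/|\zeta-z|^2$, a multiple of the Poisson kernel. Its mean over $|z|=r$ equals $c_B\asymp s_B$, the Euclidean diameter, for every $r$.

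First, the angular measure of the part of $|z|=r$ at height $\geq y$ in $B$ is of order $\sqrt{(1-r)s_B/y}$ when $y\ll s_B/(1-r)$. This is polynomial decay in $y$, not decay like $e^{-cy}$. Every horoball with $s_B\gg1-r$ then contributes an amount $\asymp s_B$ to the circle mean of $-\psi$, and only horoballs with $s_B>1-r$ meet the circle.

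Second, the packing bound $\sum s_B\lesssim1$ is false for $\Gamma(2)$. Only $\sum s_B^2$ converges: there are $\asymp1/s$ horoballs of diameter at least $s$, so $\sum_{s_B>1-r}s_B\asymp\log\frac{1}{1-r}$. Compare Ford circles, where $\sum_q\varphi(q)/q^2$ diverges ($\varphi$ Euler's function). This logarithmic sum is exactly the $-3\log\frac{1}{1-r}$ above, not a bounded error.

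A repaired argument along your lines would need this diameter-weighted horoball count for each of the three cusps with an $O(1)$ error. It would also need control of the truncation near horoball boundaries. Equivalently, it needs an $O(1)$ evaluation of $m(r,0,h)+m(r,1,h)+m(r,\infty,h)=3T(r,h)+O(1)$. Nothing in the proposal supplies this.
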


\begin{proof}[Proof of Theorem~\ref{thm:main}\textup{(b)}]
Let $h$ be the modular covering in \thmref{thm:Tsuji-modular}.  Since $h$ is a
covering map, it is locally biholomorphic.  Hence $h'(z)\ne0$ for every
$z\in\D$.  Choose $c\ne0$ so that $F=ch$ satisfies $|F'(0)|=1$.
The function $F$ omits the two finite values $0$ and $c$.  Its derivative is zero-free, and
\[
 \rho(F)=0,\qquad \lambda(F)=1,\qquad k(F)=0.
\]
Since $F'$ is zero-free, Jensen's formula and the normalization
$|F'(0)|=1$ give $m(r,0,F')=T(r,F')$.  Hence $\delta(0,F')=1$.
The two omitted values and part~\textup{(a)} now give equality in
\eqref{eq:main}.  Hence the coefficient of
$(\rho+1)/\lambda(f)$ cannot be smaller than $1$.
\end{proof}

The preceding construction proves the optimality of the numerical coefficient
in \eqref{eq:main}, but it has order zero.  The following proposition, based on the construction in \cite[proof of Theorem~4]{Miles1992}, shows that the dependence on $\rho$ cannot be omitted.

\begin{proposition}\label{prop:positive-order}
For every $\rho>0$ and every integer $M>\rho+1$, there is a
meromorphic function $f$ in $\D$ such that
\[
 \rho(f)=\rho,\qquad \lambda(f)=M,\qquad k(f)=\frac1M,
 \qquad \delta(0,f)=1,
 \qquad \delta(0,f')=\frac{M-\rho-1}{M+1}.
\]
For this function,
\[
 \sum_{a\ne\infty}\delta(a,f)
 =\delta(0,f')\bigl(1+k(f)\bigr)
  +\frac{\rho+1}{\lambda(f)}=1,
\]
so equality holds in \eqref{eq:main}.  On the other hand,
\[
 \delta(0,f')\bigl(1+k(f)\bigr)+\frac1{\lambda(f)}
 =1-\frac{\rho}{M}<1,
\]
so $(\rho+1)/\lambda(f)$ cannot in general be replaced by
$1/\lambda(f)$.
\end{proposition}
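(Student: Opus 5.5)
The plan is to reduce the proposition to the construction of a single function with the five listed invariants; the two displayed identities then follow by arithmetic together with \thmref{thm:main}\textup{(a)}. Indeed, if $\lambda(f)=M$, $k(f)=1/M$ and $\delta(0,f')=(M-\rho-1)/(M+1)$, then
\[
 \delta(0,f')\bigl(1+k(f)\bigr)=\frac{M-\rho-1}{M+1}\cdot\frac{M+1}{M}=\frac{M-\rho-1}{M}.
\]
Adding $(\rho+1)/M$ gives $1$, and adding $1/M$ instead gives $1-\rho/M$. Since $\delta(0,f)=1$, we have $\sum_{a\ne\infty}\delta(a,f)\geq1$, while \eqref{eq:main} forces the sum to be at most $1$. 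Hence equality holds in \eqref{eq:main}, and the sum strictly exceeds the version with $1/\lambda(f)$. So everything hinges on the existence of $f$.

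For the construction I would follow Miles \cite[proof of Theorem~4]{Miles1992}. The key structural point is that $\rho>0$ and $\lambda=M<\infty$ are compatible only if $T(r,f)$ oscillates. On a sequence of radii $T(r,f)\sim M\log\frac1{1-r}$, while on another sequence $\log T(r,f)\sim\rho\log\frac1{1-r}$. I would build $f=u^{M}e^{g}$, or a closely related product, where:
\begin{itemize}
\item $u$ is a modular-type factor with simple poles and no zeros, so that $\Nbar(r,\infty,f)=N(r,\infty,u)$ and this is $\tfrac1M$ of the pole count of $f$. This produces $k(f)=1/M$ along the extremal radii.
\item $g$ is analytic in $\D$ and defined by a lacunary (Hadamard-gap) series $\sum c_k z^{n_k}$ with rapidly increasing gaps. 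Its characteristic has $\limsup\log T/\log\frac1{1-r}=\rho$, but it is negligible, $o(\log\frac1{1-r})$, on a sequence $s_k\to1$ lying between consecutive blocks.
\end{itemize}
The value $0$ must be arranged to be Picard-omitted by $f$, giving $\delta(0,f)=1$. I would then verify $\rho(f)=\rho$ and $\lambda(f)=M$, with the liminf attained along $s_k$.

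The main obstacle, and the step I expect to take the most work, is the exact value $\delta(0,f')=(M-\rho-1)/(M+1)$. The sharpness mechanism must make the logarithmic-derivative estimate of \corref{cor:GG-log} essentially attained, with size $(\rho+1)\log\frac1{1-r}$. Here $\rho\log\frac1{1-r}$ comes from $\log^+T(p,f)$ at nearby radii where the gap series is large, and the extra $\log\frac1{1-r}$ comes from the separation term. Concretely, I would aim to show along $s_k$ that
\[
 T(s_k,f')=(M+1)\log\tfrac{1}{1-s_k}\,(1+o(1)),
 \qquad
 N(s_k,0,f')=(\rho+1)\log\tfrac1{1-s_k}\,(1+o(1)).
\]
The first estimate follows from $T(f')\approx T(f)+\Nbar(f)$. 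The second requires the zeros of $f'$ to be forced by the large Taylor blocks of $g$ just outside $|z|=s_k$; I would count them via the argument principle applied to $f'/f=Mu'/u+g'$ on circles where $g'$ dominates. It also remains to check that $s_k$ are (asymptotically) the radii realizing the liminf in $\delta(0,f')$, while at the other radii the ratio $m(r,0,f')/T(r,f')$ is no smaller. Given these two asymptotics, $\delta(0,f')=1-(\rho+1)/(M+1)=(M-\rho-1)/(M+1)$. The hypothesis $M>\rho+1$ ensures this value is positive.
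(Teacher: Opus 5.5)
Your reduction to the five invariants is correct, and the squeeze with \thmref{thm:main}\textup{(a)} for the sum is right. The construction, however, cannot give $\delta(0,f')=\frac{M-\rho-1}{M+1}$, and the target you set for it is miscounted.

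\textbf{The zero count.} Since $f$ omits $0$, $N(r,0,f')=N(r,0,\Psi)$ with $\Psi=f'/f=Mu'/u+g'$. Write $\ell:=\log\frac1{1-r}$. Jensen's formula gives $N(r,0,f')=\Nbar(r,\infty,u)+\frac1{2\pi}\int_0^{2\pi}\log|\Psi(re^{i\theta})|\dd\theta+O(1)$, so the simple poles of $\Psi$ contribute an extra $\ell$. With $T(s_k,f')\sim(M+1)\ell$ you therefore need $N(s_k,0,f')\sim(\rho+2)\ell$. Equivalently, the mean of $\log|f'/f|$ on $|z|=s_k$ must be $(\rho+1+o(1))\ell$. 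Your value $(\rho+1)\ell$ would give $1-\frac{\rho+1}{M+1}=\frac{M-\rho}{M+1}$; your final equality is an arithmetic slip that hides this.

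\textbf{Why $f'/f$ cannot be that large.} Take any radius with $T(r,f)\le K\ell$. Then $T(r,e^g)\le T(r,f)+MT(r,u)+O(1)=O(\ell)$. For a Hadamard gap series, $T(r,e^g)$ is comparable to $M_2(r,g)$ up to bounded terms (Zygmund), and $\log M_2(r,g)$ is convex in $\log r$. Interpolate between $r$ and $(1+r)/2$, where finite order gives $M_2\le(1-r)^{-\rho-\eps}$. This yields $T(p,f)=O(\ell)$ for $p=r+(1-r)^{1+\beta}$. Using this $p$ in \corref{cor:GG-log} gives $m(r,f'/f)\le(1+\beta)\ell+O(\log\ell)$. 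Hence $m(r,0,f')/T(r,f')\ge\frac{M-1-\beta}{M+1}-o(1)$ at such radii. At radii with $T(r,f)\ge K\ell$ and $K$ large, $m(r,\infty,f')$ dominates and the ratio is close to $1$. So your $f$ satisfies $\delta(0,f')\ge\frac{M-1}{M+1}>\frac{M-\rho-1}{M+1}$.

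Your heuristic ($\rho\ell$ from $\log T(p,f)$, $\ell$ from the separation term) overlooks that the bound of \corref{cor:GG-log} holds for every $p\in(r,1)$, so only its minimum over $p$ matters. An analytic lacunary factor cannot make $T(p,f)$ jump from $O(\ell)$ to $(1-r)^{-\rho}$ within distance $(1-r)^{1+\beta}$.

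\textbf{What the paper does instead.} The whole $(\rho+1)\ell$ must come from the separation term, with $p-r$ of size $e^{-(\rho+1)\ell}$. That requires a dense ring of poles just outside the circle. The paper takes $f=P^{-M}$ with $P(z)=(1-z/a)\prod_j\bigl(1-(z/r_j)^{n_j}\bigr)$, where $r_j=1-s_j$, $s_{j+1}=\exp(-s_j^{-\rho})$ and $n_j=\lfloor s_j^{-(\rho+1)}\rfloor$. On $|z|=R_j=r_je^{-\eta/n_j}$ it shows, \emph{uniformly} on the circle, that $\log|P|=A_j+O(j)$ and $|P'/P|\asymp n_j$, where $A_j=N(R_j,0,P)\sim L_j=\log\frac1{s_j}$. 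Hence $\log|f'|=-(M-\rho-1)L_j+o(L_j)<0$ on the whole circle. This gives $m(R_j,\infty,f')=0$, $T(R_j,f')=(M+1)A_j$ and $m(R_j,0,f')=MA_j-\log n_j+O(1)$, so $\delta(0,f')\le\frac{M-\rho-1}{M+1}$.

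Only this upper bound has to be computed. The reverse inequality follows from \thmref{thm:main}\textup{(a)} by the same squeeze you used for the deficiency sum. So you never need to check that other radii fail to give a smaller ratio.
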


\begin{proof}
Choose $s_1>0$ sufficiently small that $(s_j)$ is strictly decreasing
and $n_j\geq2$.  Put
\[
 s_{j+1}=\exp(-s_j^{-\rho}),\qquad r_j=1-s_j,
 \qquad n_j=\lfloor s_j^{-(\rho+1)}\rfloor,
 \qquad L_j=\log\frac1{s_j}.
\]
Then $s_j\to0$ and
\[
 L_{j+1}=s_j^{-\rho},\qquad
 \log n_j=(\rho+1)L_j+o(1),\qquad
 \sum_{k<j}n_k=o(n_j),\qquad j=o(L_j).
\]
Fix $a\in(0,r_1)$ and define
\[
 P(z)=\left(1-\frac za\right)
 \prod_{j=1}^{\infty}\left(1-\left(\frac z{r_j}\right)^{n_j}\right),
 \qquad f=P^{-M}.
\]
For each $R<1$, the tail of the product is dominated by
$\sum_j(2R/(1+R))^{n_j}$.  Thus the product converges locally uniformly in
$\D$.  Its zeros are simple and $P(0)=1$.

Write $N_P(r)=N(r,0,P)$.  If $r_j\leq r<r_{j+1}$, set $s=1-r$ and
$x=s/s_j$.  The exact zero count gives
\[
 N_P(r)=\log\frac ra+\sum_{k<j}n_k\log\frac r{r_k}
             +n_j\log\frac r{r_j}.
\]
As $j\to\infty$, uniformly for $r_j\leq r<r_{j+1}$,
\begin{equation}\label{eq:positive-order-growth}
 N_P(r)=L_j(1+o(1))+L_{j+1}(1-x)(1+o(1)),
 \qquad m(r,0,P)=O(j).
\end{equation}
Indeed, the $k=j-1$ term is
$n_{j-1}s_{j-1}(1+o(1))=L_j(1+o(1))$, the preceding terms are
$o(L_j)$, and
\[
 n_j\log\frac r{r_j}=L_{j+1}(1-x)(1+o(1)).
\]
For the proximity estimate, use
\[
 \frac1{2\pi}\int_0^{2\pi}\log^-|1-qe^{i\theta}|\dd\theta=O(1)
 \qquad(q\geq0).
\]
Here $\log^-x=\max\{-\log x,0\}$ for $x>0$, with
$\log^-0=+\infty$.
The factors with $k\leq j+1$ contribute $O(j)$, while those with
$k\geq j+2$ contribute $o(1)$.

Since $P(0)=1$, Jensen's formula gives
$T(r,P)=N_P(r)+m(r,0,P)$.  Moreover,
\[
 \log\frac1{1-r}=L_j-\log x,
 \qquad -\log x\leq(1+o(1))L_{j+1}(1-x)
\]
uniformly in the interval under consideration.  Together with
\eqref{eq:positive-order-growth}, these estimates give
$\lambda(P)\geq1$, while $r=r_j$ gives the reverse inequality.  Also,
\[
 T(r,P)=O((1-r)^{-\rho}),
 \qquad T(1-s_j/2,P)\geq\left(\frac12+o(1)\right)L_{j+1},
\]
so $\rho(P)=\rho$.  Hence
\[
 T(r,f)=M T(r,P),\qquad \rho(f)=\rho,
 \qquad \lambda(f)=M.
\]
Since $\Nbar(r,\infty,f)=N_P(r)$ and every pole of $f$ has multiplicity
$M$, \eqref{eq:positive-order-growth}, first for arbitrary $r$ and then at
$r=r_j$, gives $k(f)=1/M$.  The function $f$ omits $0$, and therefore
$\delta(0,f)=1$.

Fix $\eta>0$ and set
\[
 R_j=r_j e^{-\eta/n_j},\qquad A_j=N_P(R_j),
 \qquad Q=\frac{P'}P.
\]
For all sufficiently large $j$, one has $r_{j-1}<R_j<r_j$, and applying
\eqref{eq:positive-order-growth} with $j$ replaced by $j-1$ gives
$A_j=L_j(1+o(1))$.  Uniformly on $|z|=R_j$, one has
\begin{equation}\label{eq:positive-order-circle}
 \log|P(z)|=A_j+O(j),\qquad |Q(z)|\asymp n_j.
\end{equation}
For the second relation, write
\[
 zQ(z)=\frac{z}{z-a}-\sum_{k\geq1}
 \frac{n_k(z/r_k)^{n_k}}{1-(z/r_k)^{n_k}}.
\]
The term with $k=j$ has modulus comparable with $n_j$.  The sum over
$k<j$ is $o(n_j)$, and the remaining sum is $o(1)$.  For the first
relation in \eqref{eq:positive-order-circle}, factor $(z/r_k)^{n_k}$
from the terms with $k<j$.  The $j$th factor is bounded above and below,
and the product over $k>j$ is $1+o(1)$.

Now $f'=-MP^{-M}Q$.  By \eqref{eq:positive-order-circle},
\[
 \log|f'(R_je^{i\theta})|
 =-(M-\rho-1)L_j+o(L_j)<0
\]
uniformly for all sufficiently large $j$.  Hence
$m(R_j,\infty,f')=0$.  The poles of $Q$ are the simple zeros of $P$, so
$N(R_j,\infty,Q)=A_j$, while $Q(0)=-1/a$.  Jensen's formula and
\eqref{eq:positive-order-circle} therefore give
\[
 N(R_j,0,Q)=A_j+\log n_j+O(1).
\]
At each zero of $P$, the function $f'$ has a pole of order $M+1$.
Consequently,
\[
 T(R_j,f')=(M+1)A_j,
 \qquad
 m(R_j,0,f')=MA_j-\log n_j+O(1).
\]
It follows that
\[
 \delta(0,f')\leq
 \lim_{j\to\infty}\frac{m(R_j,0,f')}{T(R_j,f')}
 =\frac{M-\rho-1}{M+1}.
\]
Since $\delta(0,f)=1$, part~\textup{(a)} of \thmref{thm:main} and the
preceding estimate give
\[
 1\leq\sum_{a\ne\infty}\delta(a,f)
 \leq\delta(0,f')\left(1+\frac1M\right)+\frac{\rho+1}{M}
 \leq\frac{M-\rho-1}{M+1}\left(1+\frac1M\right)
      +\frac{\rho+1}{M}=1.
\]
Thus all these inequalities are equalities.  In particular,
\[
 \delta(0,f')=\frac{M-\rho-1}{M+1},
 \qquad
 \sum_{a\ne\infty}\delta(a,f)=1.
\]
Finally,
\[
 \delta(0,f')\bigl(1+k(f)\bigr)+\frac1{\lambda(f)}
 =1-\frac\rho M,
\]
which proves the assertion.
\end{proof}

\begin{proposition}\label{prop:nth-order-sharpness}
Let $n\geq1$, let $\rho>0$, and let $M$ be an integer satisfying
\[
 M>n(\rho+1).
\]
There is a meromorphic function $f$ in $\D$ such that
\[
 \rho(f)=\rho,
 \qquad \lambda(f)=M,
 \qquad k(f)=\frac1M,
 \qquad \delta(0,f)=1,
\]
and
\[
 \delta(0,f^{(n)})=\frac{M-n(\rho+1)}{M+n}>0.
\]
For
\[
 V=\operatorname{span}_{\C}\{1,z,\ldots,z^{n-1}\},
 \qquad A=\{0\},
\]
equality holds in \eqref{eq:rational-target-extension}.  Consequently, the coefficient $n$ of $(\rho+1)/\lambda(f)$ in that estimate is best possible for every $n$, and the dependence on $\rho$ cannot be omitted.
\end{proposition}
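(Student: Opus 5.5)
The plan is to reuse the construction in the proof of \propref{prop:positive-order} without change: the sequences $s_j$, $r_j$, $n_j$, $L_j$ and the product $P$, with $f=P^{-M}$. The only difference is that $M$ is now an integer with $M>n(\rho+1)$. That proof depends only on $P$, so it still gives $\rho(f)=\rho$, $\lambda(f)=M$ and $k(f)=1/M$. Since $f$ omits $0$, we also have $\delta(0,f)=1$. What remains is to compute $\delta(0,f^{(n)})$ and then verify equality in \eqref{eq:rational-target-extension}. For $V=\operatorname{span}_{\C}\{1,\ldots,z^{n-1}\}$ we have $D_Vf=f^{(n)}$, and $T(r,f^{(n)})>0$ for large $r$ by the outer-annulus argument.

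For the upper bound on $\delta(0,f^{(n)})$ I will use the same radii $R_j=r_je^{-\eta/n_j}$. On $|z|=R_j$ the estimates \eqref{eq:positive-order-circle} give $\log|P|=A_j+O(j)$ and $|Q|\asymp n_j$. I expect the derivatives to satisfy $P^{(k)}/P\approx Q^k$, with $|P^{(k)}/P|\asymp n_j^k$ uniformly on $|z|=R_j$. The reason is that the dominant factor $1-(z/r_j)^{n_j}$ has $k$-th logarithmic-type derivative of size $n_j^k$ times a function of $w=(z/r_j)^{n_j}$ lying in a fixed compact set where $|w|=e^{-\eta}$, and the other factors contribute lower order. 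Writing $f^{(n)}=P^{-M}\,\Phi_n$, where $\Phi_n$ is a polynomial in $Q,Q',\ldots$ whose leading behaviour is $(-M)(-M-1)\cdots(-M-n+1)Q^n$ plus corrections, I obtain $|\Phi_n|\asymp n_j^{n}$ on $|z|=R_j$. This needs a nonvanishing check: $\Phi_n$ equals $n_j^n$ times an explicit rational function $\phi(w)$, and $\eta$ must be chosen so that $\phi$ has no zeros on $|w|=e^{-\eta}$. Since there are finitely many zeros, all but finitely many $\eta$ work. It follows that $\log|f^{(n)}|=-MA_j+n\log n_j+o(L_j)=-(M-n(\rho+1))L_j+o(L_j)<0$, so $m(R_j,\infty,f^{(n)})=0$.

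Each pole of $f^{(n)}$ has order $M+n$, so $T(R_j,f^{(n)})=(M+n)A_j$. Applying Jensen's formula to $\Phi_n$ (poles of order $n$ at the zeros of $P$, and $\log|\Phi_n|=n\log n_j+O(1)$ on the circle) gives $N(R_j,0,f^{(n)})=nA_j+n\log n_j+O(j)$. Hence $m(R_j,0,f^{(n)})=MA_j-n(\rho+1)L_j+o(L_j)$. Since $A_j\sim L_j$, this yields $\delta(0,f^{(n)})\le (M-n(\rho+1))/(M+n)$.

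The reverse inequality and equality come from squeezing, exactly as in \propref{prop:positive-order}. By \thmref{thm:rational-target-extension} with $A=\{0\}$,
\[
1=\delta(0,f)\le \delta(0,f^{(n)})\Bigl(1+\frac nM\Bigr)+\frac{n(\rho+1)}M\le\frac{M-n(\rho+1)}{M+n}\cdot\frac{M+n}{M}+\frac{n(\rho+1)}M=1,
\]
so every inequality here is an equality. Sharpness of the coefficient $n$ follows because any smaller coefficient $c<n$ would make the right-hand side $<1$. Similarly, replacing $(\rho+1)$ by $1$ gives $1-n\rho/M<1$, which shows that the dependence on $\rho$ cannot be dropped. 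I expect the main obstacle to be the uniform asymptotic $|\Phi_n|\asymp n_j^n$ on $|z|=R_j$, that is, ruling out cancellation among the terms of the higher derivative. My first approach is the reduction to the single variable $w$ described above, with $\eta$ chosen generically.
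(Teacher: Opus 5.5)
Your proposal is correct and follows the paper's proof. The paper uses the same $f=P^{-M}$ and circles $|z|=R_j$, and reduces $z^nf^{(n)}/(n_j^nf)$ to a model function of $w=(z/r_j)^{n_j}$, namely $H_{M,n}(w)=(1-w)^M(w\,\dd/\dd w)^n(1-w)^{-M}$, after showing that the other factors of $P$ contribute only $o(n_j^\ell)$ to $(z\,\dd/\dd z)^\ell\log$. It then chooses $\eta$ to avoid the zeros of $H_{M,n}$ (it takes $\eta$ large using $H_{M,n}(w)=Mw+O(w^2)$; your generic choice works equally well), applies Jensen's formula, and squeezes through \thmref{thm:rational-target-extension}, just as you do.

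One caution: your heuristics $P^{(k)}/P\approx Q^k$ and ``leading term $(-M)(-M-1)\cdots(-M-n+1)Q^n$'' are false on $|z|=R_j$. There $Q',\dots,Q^{(n-1)}$ are as large as the corresponding powers of $Q$; for $n=2$ the true model function is $Mw(1+Mw)/(1-w)^2$, which vanishes at $w=-1/M$. This is exactly why the nonvanishing check must be made on the true model function and cannot be skipped.
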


\begin{proof}
Use the sequences, the product $P$, and the function $f=P^{-M}$ from the
proof of \propref{prop:positive-order}.  The calculations there give
\[
 \rho(f)=\rho,
 \qquad \lambda(f)=M,
 \qquad k(f)=\frac1M,
 \qquad \delta(0,f)=1.
\]
It remains to control the terms created by the $n$th derivative.

Put $\mathcal D=w\,\dd/\dd w$ and, for $1\leq q\leq n$, define
\[
 H_{M,q}(w)=(1-w)^M\mathcal D^q(1-w)^{-M}.
\]
Since $H_{M,n}(w)=Mw+O(w^2)$ at the origin, choose $c\in(0,1)$ such that
$H_{M,n}(w)\ne0$ whenever $0<|w|\leq c$.  Set
\[
 \eta=-\log c,
 \qquad R_j=r_j e^{-\eta/n_j},
 \qquad A_j=N_P(R_j).
\]
The calculation leading to \eqref{eq:positive-order-circle} gives
\begin{equation}\label{eq:nth-sharpness-P-circle}
 A_j=L_j(1+o(1)),
 \qquad \log|P(z)|=A_j+O(j)
 \quad (|z|=R_j).
\end{equation}

We record the estimate that rules out cancellation.  Write
\[
 \mathcal E=z\frac{\dd}{\dd z},
 \qquad u_k(z)=\left(\frac z{r_k}\right)^{n_k},
 \qquad G_j(z)=\frac{P(z)}{1-u_j(z)}.
\]
For $1\leq\ell\leq n$, let
$\Phi_\ell(w)=\mathcal D^\ell\log(1-w)$.  The rational function
$\Phi_\ell$ is $O(|w|)$ for $|w|\leq1/2$ and is bounded for $|w|\geq2$.
We therefore have
\[
 \mathcal E^\ell\log G_j(z)
 =O(1)+\sum_{k\ne j}n_k^\ell\Phi_\ell(u_k(z))
 =o(n_j^\ell)
\]
uniformly on $|z|=R_j$.
Indeed, the terms with $k<j$ are bounded by
$O(\sum_{k<j}n_k^\ell)=o(n_j^\ell)$.  For $k>j$,
\[
 |u_k(z)|\leq\exp\left(-\frac{s_jn_k}{2}\right).
\]
The facts that $n_{k+1}\geq2n_k$ for all sufficiently large $k$ and
$s_jn_{j+1}/\log n_{j+1}\to+\infty$ show that
\[
 \sum_{k>j}n_k^\ell
 \exp\left(-\frac{s_jn_k}{2}\right)
 \leq\sum_{m\geq0}(2^mn_{j+1})^\ell
 \exp\left(-\frac{s_j2^mn_{j+1}}2\right)=o(1).
\]
Here $x^\ell e^{-s_jx/2}$ is decreasing for $x\geq n_{j+1}$ when $j$ is
sufficiently large.

Let $B_j=G_j^{-M}$.  Each quotient
$\mathcal E^\ell B_j/B_j$ is a differential polynomial in
$\mathcal E\log B_j,\ldots,\mathcal E^\ell\log B_j$.  Hence
\[
 \frac{\mathcal E^\ell B_j}{B_j}=o(n_j^\ell)
 \qquad(1\leq\ell\leq n).
\]
Applying the Leibniz rule to
$f=B_j(1-u_j)^{-M}$ now gives
\[
 \frac{\mathcal E^qf}{n_j^qf}=H_{M,q}(u_j)+o(1)=O(1)
 \qquad(1\leq q\leq n)
\]
uniformly on $|z|=R_j$.
Since
\[
 z^n\frac{\dd^n}{\dd z^n}
 =\mathcal E(\mathcal E-1)\cdots(\mathcal E-n+1),
\]
it follows that $\mathcal E^qf=O(n_j^qf)=o(n_j^nf)$ for $q<n$.
Thus every lower-degree term in the displayed operator product is
$o(n_j^nf)$.  Therefore
\begin{equation}\label{eq:nth-sharpness-derivative-circle}
 \frac{z^nf^{(n)}(z)}{n_j^nf(z)}
 =H_{M,n}(u_j(z))+o(1),
 \qquad
 \left|\frac{f^{(n)}(z)}{f(z)}\right|\asymp n_j^n
 \quad(|z|=R_j).
\end{equation}

Equations \eqref{eq:nth-sharpness-P-circle} and
\eqref{eq:nth-sharpness-derivative-circle} imply
\[
 \log|f^{(n)}(z)|
 =-\bigl(M-n(\rho+1)\bigr)L_j+o(L_j)<0
\]
uniformly on $|z|=R_j$.
Thus $m(R_j,\infty,f^{(n)})=0$ for all sufficiently large $j$.
Every pole of $f$ has order $M$, and every pole of $f^{(n)}$ has order
$M+n$.  It follows that
\[
 T(R_j,f^{(n)})=(M+n)A_j.
\]
Jensen's formula for $P$ and
\eqref{eq:nth-sharpness-derivative-circle} also give
\[
 m(R_j,0,f^{(n)})=MA_j-n\log n_j+O(1).
\]
Consequently,
\[
 \delta(0,f^{(n)})
 \leq\frac{M-n(\rho+1)}{M+n}.
\]

For the stated space $V$, one has $D_V=D^n$.  The function $f^{(n)}$ has a pole at the zero $a$ of $P$, so its characteristic is positive for all
$r>a$.  Thus, applying \thmref{thm:rational-target-extension} with
$A=\{0\}$ gives
\[
\begin{aligned}
 1=\delta(0,f)
 &\leq\delta(0,f^{(n)})\left(1+\frac nM\right)
       +\frac{n(\rho+1)}M\\
 &\leq\frac{M-n(\rho+1)}{M+n}\left(1+\frac nM\right)
       +\frac{n(\rho+1)}M=1.
\end{aligned}
\]
Both inequalities are equalities, which proves the proposition.
Moreover,
\[
 \delta(0,f^{(n)})\left(1+\frac nM\right)+\frac nM
 =1-\frac{n\rho}{M}<1.
\]
Thus replacing $\rho+1$ by $1$ would make the estimate false.
\end{proof}

\section{Deficiency sums over finite-dimensional subspaces of
\texorpdfstring{$\C(z)$}{C(z)}}
\label{sec:rational-targets}

For a rational target $a\in\C(z)$ and a meromorphic function $f$ in $\D$
with $f\not\equiv a$, we extend the notation by setting
\[
 m(r,a,f)=m\!\left(r,\frac1{f-a}\right).
\]
If, in addition, $T(r,f)>0$ for all sufficiently large $r$, define
\[
 \delta(a,f)=\liminf_{r\to1^-}\frac{m(r,a,f)}{T(r,f)}.
\]
Let $V\subset\C(z)$ be an $n$-dimensional complex vector space with $n\geq1$
and choose a basis $v_1,\ldots,v_n$.  Define
\begin{equation}\label{eq:DV-definition}
 D_Vg=
 \frac{W(v_1,\ldots,v_n,g)}{W(v_1,\ldots,v_n)}.
\end{equation}

\begin{lemma}\label{lem:Wronskian-criterion}
Meromorphic functions $u_1,\ldots,u_m$ on the connected domain $\D$ are
linearly dependent over $\C$ if and only if
\[
 W(u_1,\ldots,u_m)\equiv0.
\]
\end{lemma}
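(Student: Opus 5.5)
The easy direction is dependence $\Rightarrow$ vanishing Wronskian. For the converse I will argue by induction on $m$, dividing by one of the functions so that the problem reduces to $m-1$ derivatives.

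\emph{Dependence implies $W\equiv0$.} Suppose $\sum_i c_iu_i\equiv0$ with constants $c_i$ not all zero. Differentiating this relation $k$ times gives $\sum_i c_iu_i^{(k)}\equiv0$ for $0\leq k\leq m-1$. Hence at every point where all $u_i$ are holomorphic, the columns of the Wronskian matrix satisfy a nontrivial linear relation, so its determinant vanishes there. That set is the complement of a discrete set in $\D$, so by continuity of meromorphic functions $W\equiv0$.

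\emph{$W\equiv0$ implies dependence.} The case $m=1$ is immediate, since $W(u_1)=u_1$. Now let $m\geq2$ and assume the claim for $m-1$ functions. If $u_1\equiv0$, the family is trivially dependent. Otherwise put $g_i=u_i/u_1$, so that $g_1\equiv1$.

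The key identity is
\[
 W(u_1g_1,\ldots,u_1g_m)=u_1^m\,W(g_1,\ldots,g_m).
\]
To see it, apply the Leibniz rule to each $(u_1g_i)^{(k)}$. After column operations, the Wronskian matrix of the products becomes a lower-triangular matrix with diagonal $u_1$ times the Wronskian matrix of the $g_i$.

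Since $g_1=1$, expanding $W(1,g_2,\ldots,g_m)$ along its first column gives $W(g_2',\ldots,g_m')$. Therefore $W(u_1,\ldots,u_m)\equiv0$ forces
\[
 W(g_2',\ldots,g_m')\equiv0.
\]
By the induction hypothesis there are constants $c_2,\ldots,c_m$, not all zero, with $\sum_{i\geq2}c_ig_i'\equiv0$.

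On the connected set $\D$ minus the discrete set of poles, this says $\sum_{i\geq2}c_ig_i$ has zero derivative and is therefore a constant $-c_1$. Then $\sum_{i=1}^m c_ig_i\equiv0$, and multiplying through by $u_1$ gives a nontrivial relation $\sum_i c_iu_i\equiv0$.

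The only delicate step is the triangular identity for the Wronskian of products. Everything else is the standard use of connectedness of $\D$ off a discrete set, which is exactly why the lemma assumes a connected domain.
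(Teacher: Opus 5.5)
Your proof is correct, but it takes a different route from the paper. The paper gives no argument of its own. It quotes the classical criterion for analytic functions from P\'olya--Szeg\H{o} and says the meromorphic version follows ``by localization and analytic continuation.'' That is: restrict to a small disc free of poles, where the $u_i$ are holomorphic and $W\equiv0$; obtain a linear relation there from the analytic theorem; then extend $\sum c_iu_i\equiv0$ to all of $\D$ by the identity theorem.

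Your induction instead works directly in the field $\mathcal M(\D)$ and is self-contained. It uses only two facts, both consequences of connectedness:
\begin{itemize}
\item $\mathcal M(\D)$ is a field, so you may divide by $u_1\not\equiv0$ and cancel $u_1^m$. You use this one without saying so.
\item The elements of $\mathcal M(\D)$ with zero derivative are exactly the constants $\C$.
\end{itemize}
So your argument proves the criterion verbatim for any differential field with constant field $\C$. It needs neither the analytic case nor a continuation step. The paper's reduction is shorter, but it rests on the cited textbook result.

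One wording point: if columns index the functions, the Leibniz rule writes the Wronskian matrix of the products as $L$ times the Wronskian matrix of the $g_i$. Here $L$ is lower triangular with entries $L_{kl}=\binom{k}{l}u_1^{(k-l)}$ for $l\leq k$ and diagonal $u_1$. Passing from one matrix to the other is a row operation, not a column operation. The determinant identity you draw from it is correct.
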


This is the standard Wronskian criterion.  See
\cite[Part~VII, \S5, Problem~60]{PolyaSzego1998}.  The meromorphic formulation above follows by localization and analytic continuation.

\begin{proposition}\label{prop:DV-kernel}
The denominator in \eqref{eq:DV-definition} is not identically zero, the
operator $D_V$ is independent of the chosen basis, and
\[
 D_V=D^n+c_{n-1}D^{n-1}+\cdots+c_0,
 \qquad c_j\in\C(z).
\]
Moreover,
\[
 \ker\bigl(D_V\colon\mathcal M(\D)\to\mathcal M(\D)\bigr)=V,
\]
where $\mathcal M(\D)$ denotes the field of meromorphic functions in $\D$.
\end{proposition}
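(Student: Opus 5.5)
The plan is to verify the three assertions of \propref{prop:DV-kernel} in order, using \lemref{lem:Wronskian-criterion} throughout.

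\emph{Nonvanishing of the denominator.} Since $v_1,\ldots,v_n$ is a basis, they are linearly independent over $\C$ as rational functions. Restricted to $\D$ they are meromorphic there, and they remain independent: a linear relation valid on $\D$ extends to all of $\C$ by the identity theorem for rational functions. \lemref{lem:Wronskian-criterion} then gives $W(v_1,\ldots,v_n)\not\equiv0$. Moreover this Wronskian is a rational function, since derivatives of rational functions are rational.

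\emph{Form of $D_V$.} Expand the $(n+1)\times(n+1)$ determinant $W(v_1,\ldots,v_n,g)$ along its last column, whose entries are $g,g',\ldots,g^{(n)}$. This gives
\[
 W(v_1,\ldots,v_n,g)=\sum_{j=0}^n C_j\,g^{(j)}.
\]
Each cofactor $C_j$ is, up to sign, a minor built from derivatives of the $v_i$, hence lies in $\C(z)$. The coefficient of $g^{(n)}$ is exactly $W(v_1,\ldots,v_n)$. Dividing by it yields $D_V=D^n+c_{n-1}D^{n-1}+\cdots+c_0$ with $c_j\in\C(z)$.

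\emph{Basis independence.} Take another basis $w=Bv$ with $B\in GL_n(\C)$. The Wronskian matrix of $(w_1,\ldots,w_n,g)$ is that of $(v_1,\ldots,v_n,g)$ multiplied by the block matrix $\operatorname{diag}(B,1)$, acting on columns. Hence numerator and denominator both acquire the factor $\det B$, and the quotient is unchanged.

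\emph{Kernel.} For $g\in V$, the family $v_1,\ldots,v_n,g$ is linearly dependent, so $D_Vg=0$ by the lemma. Conversely, suppose $g\in\mathcal M(\D)$ satisfies $D_Vg=0$. Then $W(v_1,\ldots,v_n,g)\equiv0$, and the lemma gives a nontrivial relation $\sum\mu_iv_i+\mu g=0$. Here $\mu\ne0$, because the $v_i$ are independent on $\D$. Therefore $g=-\mu^{-1}\sum\mu_iv_i$, whose restriction lies in $V$.

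The only point needing care, and the main potential obstacle, is identifying $V\subset\C(z)$ with its restriction to $\D$. This requires that restriction be injective and preserve linear independence. Both follow because a rational function vanishing on an open set vanishes identically.
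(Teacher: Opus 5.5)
Your proposal is correct and follows the paper's proof essentially step for step. You use the Wronskian criterion for the nonvanishing denominator and for the kernel, the common factor $\det B$ for basis independence, and cofactor expansion along the last column for the monic form with rational coefficients. Your remark on restricting $V$ to $\D$ via the identity theorem simply makes explicit a point the paper leaves implicit.
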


\begin{proof}
The functions $v_1,\ldots,v_n$ are linearly independent over $\C$, so
\lemref{lem:Wronskian-criterion} shows that their Wronskian is not identically
zero.  A change of basis multiplies both
Wronskians in \eqref{eq:DV-definition} by the same nonzero constant.  Expanding the numerator along its last column shows that $D_V$ is monic of order $n$
and has rational coefficients.

Clearly $V\subseteq\ker D_V$.  Conversely, if $D_Vg\equiv0$, then
$W(v_1,\ldots,v_n,g)\equiv0$.  Lemma~\ref{lem:Wronskian-criterion} gives constants
$c_1,\ldots,c_n,c$, not all zero, such that
\[
 c g+\sum_{j=1}^n c_jv_j=0.
\]
The independence of $v_1,\ldots,v_n$ forces $c\ne0$, and hence $g\in V$.
\end{proof}

\begin{lemma}\label{lem:simultaneous-finite-order}
Let $n\geq1$ and $q\geq1$, and let
\[
 L=D^n+c_{n-1}D^{n-1}+\cdots+c_0,
 \qquad c_j\in\C(z),
\]
and let $g_1,\ldots,g_q$ be meromorphic functions in $\D$, none identically zero, of order at most $\sigma<\infty$.  For every $\eps>0$, as $r\to1^-$,
\begin{align}
 &\frac1{2\pi}\int_0^{2\pi}
 \log^+\max_{1\leq\nu\leq q}
 \left|\frac{Lg_\nu}{g_\nu}(re^{i\theta})\right|\dd\theta \notag\\
 &\qquad\leq
 n(\sigma+1+\eps)\log\frac1{1-r}+O_{L,\boldsymbol g,\eps}(1).
 \label{eq:simultaneous-finite-order}
\end{align}
\end{lemma}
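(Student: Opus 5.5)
The plan is to reduce the estimate to logarithmic-derivative bounds for each $g_\nu$ separately, and to exploit the fact that $\log^+$ of a maximum of finitely many quantities is at most the sum of their $\log^+$'s. First,
\[
 \log^+\max_\nu|x_\nu|\leq\sum_\nu\log^+|x_\nu|.
\]
This is too crude: it would produce a factor $q$. So I will not split the maximum over $\nu$ in that way. Instead I will write each quotient $Lg_\nu/g_\nu$ as a sum of terms $c_k\,g_\nu^{(k)}/g_\nu$ with $c_n=1$. Then I use
\[
 \log^+\Bigl|\sum_{k=0}^{n} y_k\Bigr|\leq\max_k\log^+|y_k|+\log(n+1).
\]
Since the rational coefficients satisfy $\log^+|c_k(re^{i\theta})|=O_L(1)$ near $|z|=1$ (finitely many poles, none accumulating at the circle), everything reduces to bounding
\[
 \frac1{2\pi}\int_0^{2\pi}\max_{\nu,k}\log^+\left|\frac{g_\nu^{(k)}}{g_\nu}\right|\dd\theta .
\]

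The key step is to control this maximum by an $L^\gamma$ estimate rather than by summing proximity functions. I factor
\[
 \frac{g^{(k)}}{g}=\prod_{i=0}^{k-1}\frac{g^{(i+1)}}{g^{(i)}}
\]
for each $g=g_\nu$; the factors are well defined because $g^{(i)}\not\equiv0$ unless $g$ is a polynomial of degree $<i$, a trivial case I dispose of separately. For $x_\nu\geq0$ I then bound
\[
 \max_\nu\log^+ x_\nu\leq\gamma^{-1}\log\Bigl(1+\sum_\nu x_\nu^{\gamma}\Bigr).
\]
By Jensen's inequality (concavity of $\log$) the integral is at most
\[
 \gamma^{-1}\log\Bigl(1+\sum_{\nu,k}\frac1{2\pi}\int\Bigl|\frac{g_\nu^{(k)}}{g_\nu}\Bigr|^{\gamma}\dd\theta\Bigr).
\]
The crucial point is that $q$ and $n$ now appear only inside the logarithm, contributing $O(1)$. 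Each $|g^{(k)}/g|^\gamma$ is a product of $k$ logarithmic derivatives. By H\"older's inequality with exponents $k$, its mean is at most the product of the $L^{k\gamma}$ means of $(g^{(i)})'/g^{(i)}$, and I choose $\gamma=1/(2n)$ so that $k\gamma\leq 1/2<1$. \thmref{thm:GG-source}, applied with exponent $k\gamma$ to $g^{(i)}$ and with $p=(1+r)/2$, gives each $L^{k\gamma}$ mean as at most
\[
 C\Bigl(\tfrac{1}{1-r}\Bigr)^{k\gamma}\bigl(T(p,g^{(i)})+O(1)\bigr)^{k\gamma}.
\]
Multiplying these over $i<k$ and raising to $1/k$, then taking $\gamma^{-1}\log$, yields
\[
 \log\frac1{1-r}+\frac1k\sum_{i<k}\log^+T(p,g^{(i)})\cdot k\cdot\frac{\gamma}{\gamma}\ \text{-type terms}.
\]
Concretely, the exponents combine to give $k\log\frac1{1-r}+\sum_{i<k}\log^+T(p,g^{(i)})+O(1)$ for the $k$-th term; I will double-check this bookkeeping. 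Taking the worst case $k=n$ gives $n\log\frac1{1-r}+\sum_{i<n}\log^+T(p,g^{(i)})$.

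It remains to show that $\log^+T(p,g^{(i)})\leq(\sigma+1+\eps)\log\frac1{1-r}+O(1)$ for $i\geq1$, and $\leq(\sigma+\eps)\log\frac1{1-r}$ for $i=0$. This is the main obstacle: I need an honest order bound for derivatives in the disc. I will use
\[
 T(p,g')\leq 2T(p,g)+m(p,g'/g)\leq 2T(p,g)+O(\log T(p',g))+O\bigl(\log\tfrac1{1-p}\bigr)
\]
with $p'=(1+p)/2$, via \corref{cor:GG-log}. This shows $\rho(g^{(i)})\leq\sigma$, since additive logarithms do not increase the order. Hence $\log^+T(p,g^{(i)})\leq(\sigma+\eps)\log\frac1{1-r}+O(1)$ for every $i$.

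The sum over $i<n$ then gives $n(\sigma+\eps)\log\frac1{1-r}$, which together with the $n\log\frac1{1-r}$ separation terms yields exactly
\[
 n(\sigma+1+\eps)\log\frac1{1-r}+O_{L,\boldsymbol g,\eps}(1),
\]
after renaming $\eps$. The delicate points to verify are the H\"older bookkeeping, so that the $\log\frac1{1-r}$ coefficient is exactly $n$ and the $T$-terms enter with total weight $n$ (one per factor, not $k$ per factor), and the uniformity of the constants in $\nu$.
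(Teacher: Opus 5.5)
Your proposal is correct and is essentially the paper's argument. Both use the same ingredients: the order bound $\rho(g')\le\rho(g)$ via \corref{cor:GG-log}, the $L^\gamma$ estimate of \thmref{thm:GG-source} on the successive quotients $g^{(i+1)}/g^{(i)}$, H\"older's inequality on the telescoping product, and Jensen's inequality to put the maximum over finitely many quotients inside one logarithm at a cost of only $O(1)$. The only difference is the normalization: the paper takes $Y=\max_{\nu,j}|g_\nu^{(j)}/g_\nu|^{1/j}$ with a fixed $\gamma=1/2$ and uses $|Lg/g|\le(1+\sum_j|c_j|)(1+Y^n)$, whereas you keep $\gamma=1/(2n)$ and apply the estimate with exponents $k\gamma$. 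Your bookkeeping $k\log\frac1{1-r}+\sum_{i<k}\log^+T(p,g^{(i)})+O(1)$ is right.

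One small correction: $\log^+|c_k(re^{i\theta})|$ is not bounded pointwise in general, because a coefficient may have a pole on $|z|=1$ (for $V=\operatorname{span}_{\C}\{1/(z-1)\}$ one gets $D_Vg=g'+g/(z-1)$). You only need the circular mean $m(r,\infty,c_k)=O_L(1)$, which holds for every rational function, and this is how the paper handles it.
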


\begin{proof}
Fix $\eps>0$, put $\xi=\eps/2$, and set $\gamma=1/2$.  We first verify that differentiation does not increase the order.  Let $h\not\equiv0$ be a meromorphic function of finite order in $\D$, and suppose that
$h'\not\equiv0$.  Fix $r_0=1/2$.  For $r_0<r<1$, put $p=(1+r)/2$.  A pole of
$h$ of order $s$ is a pole of $h'$ of order $s+1$, and $h'$ has no other
poles.  Hence
\[
 N(r,\infty,h')=N(r,\infty,h)+\Nbar(r,\infty,h).
\]
The pointwise inequality
\[
 \log^+|h'|\leq\log^+|h|+\log^+\left|\frac{h'}h\right|
\]
holds away from the zeros and poles of $h$, and therefore almost everywhere on every circle.  It follows from \corref{cor:GG-log} and the monotonicity of
$T(r,h)$ that
\begin{align*}
 T(r,h')
 &\leq T(r,h)+\Nbar(r,\infty,h)
       +m\!\left(r,\frac{h'}h\right)\\
 &\leq 2T(p,h)+\log^+T(p,h)
       +\log\frac{p}{r(p-r)}+O_h(1).
\end{align*}
In the second inequality we used
\[
 \Nbar(r,\infty,h)\leq N(r,\infty,h)+O_h(1)
 \leq T(r,h)+O_h(1).
\]
Here and below, bounded terms are considered only for $r$ sufficiently close
to $1$.  Since
\[
 1-p=\frac{1-r}{2},
 \qquad
 \frac{p}{r(p-r)}=\frac{1+r}{r(1-r)},
\]
the definition of order shows that, for every $\eta>0$,
\[
 T(p,h)=O_{h,\eta}\!\left(
 (1-r)^{-(\rho(h)+\eta)}\right).
\]
Since $\rho(h)+\eta>0$, both $\log^+T(p,h)$ and
$\log(p/[r(p-r)])$ are also
$O_{h,\eta}((1-r)^{-(\rho(h)+\eta)})$.
The preceding estimate for $T(r,h')$ then gives
\[
 T(r,h')=O_{h,\eta}\!\left(
 (1-r)^{-(\rho(h)+\eta)}\right).
\]
Letting $\eta\downarrow0$ yields
\[
 \rho(h')\leq\rho(h).
\]
By induction,
\begin{equation}\label{eq:derivatives-preserve-order}
 \rho\bigl(g_\nu^{(k)}\bigr)\leq\sigma
 \quad\text{whenever }g_\nu^{(k)}\not\equiv0.
\end{equation}

Let $I$ be the set of indices $\nu$ for which $g_\nu$ is not rational.  If
$\nu\notin I$, then $Lg_\nu/g_\nu$ is rational.  Since the characteristic of a rational function is bounded as $r\to1^-$,
\[
 m\!\left(r,\frac{Lg_\nu}{g_\nu}\right)=O_{L,g_\nu}(1).
\]
Since there are only finitely many indices, the rational ones together
contribute $O_{L,\boldsymbol g}(1)$ to the radial mean in
\eqref{eq:simultaneous-finite-order}.  If $I=\varnothing$, the lemma follows.
We may therefore assume that $I\ne\varnothing$.  For $\nu\in I$, every
derivative
$g_\nu^{(k)}$ is not identically zero.  Indeed, if
$g_\nu^{(k)}\equiv0$ for some $k\geq1$, then $g_\nu$ is a polynomial and hence rational, contrary to the definition of $I$.  We may therefore apply
\thmref{thm:GG-source}, with the fixed values $r_0=1/2$ and $\gamma=1/2$, to
\[
 h=g_\nu^{(k)},
 \qquad \nu\in I,
 \qquad 0\leq k\leq n-1.
\]
The family of these functions is finite.  By
\eqref{eq:derivatives-preserve-order} and the definition of order, uniformly
over this family,
\[
 T\bigl(p,g_\nu^{(k)}\bigr)
 =O_{\boldsymbol g,n,\eps}\!\left(
 (1-r)^{-(\sigma+\xi)}\right).
\]
Moreover,
\[
 \frac{p}{r(p-r)}=O\!\left(\frac1{1-r}\right).
\]
The constants $\beta_1(g_\nu^{(k)},r_0)$ in
\thmref{thm:GG-source} are fixed, and there are only finitely many of them.
Consequently, simultaneously for $\nu\in I$ and $0\leq k\leq n-1$,
\begin{equation}\label{eq:successive-log-derivatives}
 \frac1{2\pi}\int_0^{2\pi}
 \left|
 \frac{g_\nu^{(k+1)}(re^{i\theta})}
      {g_\nu^{(k)}(re^{i\theta})}
 \right|^\gamma\dd\theta
 =O_{\boldsymbol g,n,\eps}\!\left(
 (1-r)^{-\gamma(\sigma+1+\xi)}\right).
\end{equation}

For $1\leq j\leq n$, the identity
\[
 \frac{g_\nu^{(j)}}{g_\nu}
 =\prod_{k=0}^{j-1}
 \frac{g_\nu^{(k+1)}}{g_\nu^{(k)}}
\]
holds away from the zeros and poles of the intermediate derivatives, and
hence holds as an identity of meromorphic functions.  It is therefore valid almost everywhere on every circle.  H\"older's inequality and
\eqref{eq:successive-log-derivatives} give
\begin{align}
 &\frac1{2\pi}\int_0^{2\pi}
 \left|
 \frac{g_\nu^{(j)}(re^{i\theta})}
      {g_\nu(re^{i\theta})}
 \right|^{\gamma/j}\dd\theta \notag\\
 &\quad\leq
 \prod_{k=0}^{j-1}
 \left\{
 \frac1{2\pi}\int_0^{2\pi}
 \left|
 \frac{g_\nu^{(k+1)}(re^{i\theta})}
      {g_\nu^{(k)}(re^{i\theta})}
 \right|^\gamma\dd\theta
 \right\}^{1/j} \notag\\
 &\quad=O_{\boldsymbol g,n,\eps}\!\left(
 (1-r)^{-\gamma(\sigma+1+\xi)}\right).
 \label{eq:generalized-log-derivatives}
\end{align}

Put
\[
 Y(re^{i\theta})=
 \max_{\substack{\nu\in I\\1\leq j\leq n}}
 \left|
 \frac{g_\nu^{(j)}}{g_\nu}(re^{i\theta})
 \right|^{1/j}.
\]
Hence \eqref{eq:generalized-log-derivatives} implies
\[
 \frac1{2\pi}\int_0^{2\pi}Y(re^{i\theta})^\gamma\dd\theta
 =O_{\boldsymbol g,n,\eps}\!\left(
 (1-r)^{-\gamma(\sigma+1+\xi)}\right).
\]
Since
\[
 \log^+x\leq\frac1\gamma\log(1+x^\gamma)
 \qquad(x\geq0),
\]
Jensen's inequality for the concave function $\log(1+x)$ gives
\begin{align*}
 \frac1{2\pi}\int_0^{2\pi}\log^+Y(re^{i\theta})\dd\theta
 &\leq\frac1\gamma\log\left(
 1+\frac1{2\pi}\int_0^{2\pi}Y(re^{i\theta})^\gamma\dd\theta
 \right)\\
 &\leq(\sigma+1+\xi)\log\frac1{1-r}
 +O_{\boldsymbol g,n,\eps}(1).
\end{align*}

For $\nu\in I$, the definition of $Y$ gives
\[
 \left|\frac{g_\nu^{(j)}}{g_\nu}\right|
 \leq1+Y^n,
 \qquad 0\leq j\leq n.
\]
Therefore
\[
 \left|\frac{Lg_\nu}{g_\nu}\right|
 \leq
 \left(1+\sum_{j=0}^{n-1}|c_j|\right)(1+Y^n).
\]
Because the coefficients $c_j$ are rational,
\[
 \frac1{2\pi}\int_0^{2\pi}
 \log^+\left(1+\sum_{j=0}^{n-1}
 |c_j(re^{i\theta})|\right)\dd\theta
 =O_L(1).
\]
Also,
\[
 \log(1+Y^n)\leq\log2+n\log^+Y.
\]
Combining these estimates with the bounded contribution of the rational
indices yields
\[
 \frac1{2\pi}\int_0^{2\pi}
 \log^+\max_{1\leq\nu\leq q}
 \left|\frac{Lg_\nu}{g_\nu}(re^{i\theta})\right|\dd\theta
 \leq n(\sigma+1+\xi)\log\frac1{1-r}
 +O_{L,\boldsymbol g,\eps}(1).
\]
Since $\xi=\eps/2<\eps$, the desired estimate follows.
\end{proof}

\begin{proposition}\label{prop:rational-target-estimates}
Let $f$ be meromorphic in $\D$ and of finite order $\rho$.  Assume that
$f\notin V$, let $q\geq1$, and let
$A=\{a_1,\ldots,a_q\}\subset V$ be a finite set of distinct functions.  Put
\[
 M_A(r)=\sum_{\nu=1}^q m(r,a_\nu,f).
\]
For every $\eps>0$, as $r\to1^-$,
\begin{equation}\label{eq:rational-target-first}
 M_A(r)
 \leq m(r,0,D_Vf)
 +n(\rho+1+\eps)\log\frac1{1-r}+O_{A,V,f,\eps}(1),
\end{equation}
and
\begin{align}
 M_A(r)
 &\leq T(r,f)+n\Nbar(r,\infty,f)-N(r,0,D_Vf) \notag\\
 &\quad+n(\rho+1+\eps)\log\frac1{1-r}+O_{A,V,f,\eps}(1).
 \label{eq:rational-target-second}
\end{align}
\end{proposition}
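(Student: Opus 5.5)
The plan is to prove both estimates by pointwise inequalities on $|z|=r$, integrate, and control every logarithmic-derivative term with a single application of \lemref{lem:simultaneous-finite-order}. This is what keeps the error at $n(\rho+1+\eps)\log\frac1{1-r}$ rather than twice that. Put $g_\nu=f-a_\nu$ for $1\le\nu\le q$.

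\emph{Setup.} Since $f\notin V$ and $0\in V$, no $g_\nu$ vanishes identically and $f\not\equiv0$. Since $a_\nu\in V=\ker D_V$ (\propref{prop:DV-kernel}), we have $D_Vg_\nu=D_Vf$ for every $\nu$. Each $a_\nu$ is rational and so has bounded characteristic; hence every $g_\nu$ has order at most $\rho$. I will apply \lemref{lem:simultaneous-finite-order} with $L=D_V$, $\sigma=\rho$, and the family $\{f,g_1,\dots,g_q\}$. Let
\[
 Z=\max\Bigl\{\Bigl|\tfrac{D_Vf}{f}\Bigr|,\Bigl|\tfrac{D_Vg_1}{g_1}\Bigr|,\dots,\Bigl|\tfrac{D_Vg_q}{g_q}\Bigr|\Bigr\}.
\]
The lemma then gives $\frac1{2\pi}\int\log^+Z\dd\theta\le n(\rho+1+\eps)\log\frac1{1-r}+O(1)$.

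\emph{Sum versus maximum.} Because the $a_\nu$ are distinct rational functions, at any point where $|g_\nu|$ is small, all the other $|g_\mu|$ are bounded below by roughly $|a_\nu-a_\mu|$. Concretely,
\[
 \sum_\nu\log^+\frac1{|g_\nu|}\le\max_\nu\log^+\frac1{|g_\nu|}+\sum_{\mu\ne\nu}\log^+\frac{2}{|a_\nu-a_\mu|}.
\]
Each $a_\nu-a_\mu$ is a nonzero rational function, so the circle mean of $\log^+ 2/|a_\nu-a_\mu|$ is $O(1)$ as $r\to1^-$. It therefore suffices to bound the circle mean of $\max_\nu\log^+1/|g_\nu|$.

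\emph{First estimate.} From $1/g_\nu=(D_Vg_\nu/g_\nu)/D_Vf$ we get pointwise
\[
 \max_\nu\log^+\frac1{|g_\nu|}\le\log^+\frac1{|D_Vf|}+\log^+Z.
\]
Integrating and using the lemma gives \eqref{eq:rational-target-first}.

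\emph{Second estimate.} The key pointwise inequality, valid almost everywhere, is
\[
 \max_\nu\log^+\frac1{|g_\nu|}+\log^+|D_Vf|\le\log^+\frac1{|D_Vf|}+\log^+|f|+\log^+Z .
\]
I will verify it in three cases.
\begin{enumerate}
\item If $|D_Vf|<1$, it reduces to the pointwise bound used for the first estimate.
\item If $|D_Vf|\ge1$ and some $|g_\nu|<1$, the left side is $\log\bigl(|D_Vf|/|g_\nu|\bigr)=\log|D_Vg_\nu/g_\nu|\le\log^+Z$.
\item If $|D_Vf|\ge1$ and all $|g_\nu|\ge1$, the left side is $\log|D_Vf|\le\log^+|f|+\log^+|D_Vf/f|$.
\end{enumerate}
Integrating gives
\[
 M_A(r)\le m(r,0,D_Vf)-m(r,\infty,D_Vf)+m(r,\infty,f)+n(\rho+1+\eps)\log\tfrac1{1-r}+O(1).
\]

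Next, Jensen's formula for $D_Vf\not\equiv0$ gives
\[
 m(r,0,D_Vf)-m(r,\infty,D_Vf)=N(r,\infty,D_Vf)-N(r,0,D_Vf)+O(1).
\]
To bound the poles of $D_Vf=f^{(n)}+\sum_j c_jf^{(j)}$, note the following. A pole of $f$ of order $p$ at a point where all $c_j$ are regular gives a pole of $D_Vf$ of order at most $p+n$. The remaining poles lie at the finitely many fixed poles of the $c_j$ in $\D$, and those contribute $O(1)$ to any counting function as $r\to1^-$. Hence
\[
 N(r,\infty,D_Vf)\le N(r,\infty,f)+n\Nbar(r,\infty,f)+O(1).
\]
Finally, $m(r,\infty,f)+N(r,\infty,f)=T(r,f)$, which yields \eqref{eq:rational-target-second}.

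\emph{Main obstacle.} The delicate point is avoiding a doubled error term. Naively chaining the first estimate with $m(r,\infty,D_Vf)\le m(r,\infty,f)+m(r,D_Vf/f)$ would cost a second $n(\rho+1)\log\frac1{1-r}$. The three-case pointwise inequality above is designed so that, at each point, only one logarithmic derivative is used. All of them are then absorbed into the single maximum $Z$ controlled by \lemref{lem:simultaneous-finite-order}.
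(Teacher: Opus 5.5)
Your proof is correct and follows the paper's argument. It uses the same separation inequality and the same pointwise bound for the first estimate. For the second estimate, your pointwise bound becomes the paper's $\log^+(1/|g_\nu|)\le\log^+|f|+\log^+|D_Vg_\nu/g_\nu|-\log|D_Vf|+O(1)$ once you use $\log^+|D_Vf|-\log^+(1/|D_Vf|)=\log|D_Vf|$, and the rest (Jensen's formula for $D_Vf$ and the pole count) is the same. The only differences are cosmetic: you reach that bound by a three-case split and by adding $f$ itself to the family in \lemref{lem:simultaneous-finite-order}, whereas the paper uses $\log^+(1/x)=\log^+x-\log x$ together with $\log^+|g_\nu|\le\log^+|f|+\log^+|a_\nu|+\log2$.
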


\begin{proof}
The condition $f\notin V$ and \propref{prop:DV-kernel} ensure that
$D_Vf\not\equiv0$, so every expression below is defined.  Set
$g_\nu=f-a_\nu$.  At a point at which all quantities are finite, choose
an index $\nu$ for which $|g_\nu|$ is minimal.  For $\mu\ne\nu$,
\[
 |a_\mu-a_\nu|=|g_\nu-g_\mu|\leq2|g_\mu|.
\]
Consequently,
\begin{equation}\label{eq:target-separation}
 \sum_{\mu=1}^q\log^+\frac1{|g_\mu|}
 \leq
 \log^+\frac1{|g_\nu|}
 +\sum_{1\leq i<j\leq q}\log^+\frac2{|a_i-a_j|}.
\end{equation}
For a rational function $R$, one has $T(r,R)=O_R(1)$ as $r\to1^-$.  Since each $a_i-a_j$ is a nonzero rational function, the radial mean of the last
sum is therefore $O_A(1)$.  By \propref{prop:DV-kernel}, $D_Va_\nu=0$, and hence
$D_Vg_\nu=D_Vf$.  Therefore
\[
 \log^+\frac1{|g_\nu|}
 \leq
 \log^+\frac1{|D_Vf|}
 +\log^+\left|\frac{D_Vg_\nu}{g_\nu}\right|.
\]
Each $g_\nu$ has order at most $\rho$.  Integrating
\eqref{eq:target-separation} and applying
\lemref{lem:simultaneous-finite-order} proves
\eqref{eq:rational-target-first}.

For the second estimate, retain the same pointwise choice of $\nu$.  The
identity
\[
 \log^+\frac1{|g_\nu|}
 =\log^+|g_\nu|
 +\log\left|\frac{D_Vf}{g_\nu}\right|-\log|D_Vf|
\]
and $D_Vf=D_Vg_\nu$ imply
\[
 \log^+\frac1{|g_\nu|}
 \leq
 \log^+|f|
 +\log^+\left|\frac{D_Vg_\nu}{g_\nu}\right|
 -\log|D_Vf|+\log2+\log^+|a_\nu|.
\]
Here we used
$\log^+|f-a_\nu|\leq\log^+|f|+\log^+|a_\nu|+\log2$.
The last two terms have radial mean $O_A(1)$.  Combining this with
\eqref{eq:target-separation}, integrating, applying
\lemref{lem:simultaneous-finite-order}, and using Jensen's formula for
$D_Vf$, we obtain
\begin{align}
 M_A(r)
 &\leq m(r,\infty,f)+N(r,\infty,D_Vf)-N(r,0,D_Vf) \notag\\
 &\quad+n(\rho+1+\eps)\log\frac1{1-r}+O_{A,V,f,\eps}(1).
 \label{eq:before-pole-count}
\end{align}
Let $P_V$ be the finite set of poles in $\D$ of the rational coefficients of
$D_V$.  At a pole of $f$ of order $s$ outside $P_V$, every summand in $D_Vf$
has pole order at most $s+n$.  At the nonzero points of $P_V$, the additional pole multiplicities are bounded in terms of the fixed coefficients, and their integrated contribution is $O_V(1)$.  By separating the contributions at the origin, whose difference is $O_{V,f}(1)$ as $r\to1^-$, and summing the local
multiplicity estimates over the nonzero poles, we obtain
\begin{equation}\label{eq:DV-pole-count}
 N(r,\infty,D_Vf)
 \leq N(r,\infty,f)+n\Nbar(r,\infty,f)+O_{V,f}(1).
\end{equation}
Substitution in \eqref{eq:before-pole-count}, together with the identity
\[
 T(r,f)=m(r,\infty,f)+N(r,\infty,f),
\]
proves \eqref{eq:rational-target-second}.
\end{proof}

\begin{proof}[Proof of Theorem~\ref{thm:rational-target-extension}]
Since $\lambda(f)>0$, we have $T(r,f)\to\infty$, and hence $f$ is not
rational.  In particular, $f\notin V$, so
\propref{prop:rational-target-estimates} applies.  The assertion is immediate when $A=\varnothing$.  We may therefore fix a nonempty finite set
$A=\{a_1,\ldots,a_q\}\subset V$, where $q\geq1$, and write
\[
 m_V(r)=m(r,0,D_Vf),
 \qquad
 N_V(r)=N(r,0,D_Vf),
 \qquad
 U_V(r)=T(r,D_Vf).
\]
The hypothesis on $T(r,D_Vf)$ makes $U_V(r)$ positive when $r$ is
sufficiently close to $1$.  For these radii, put
\[
 \alpha_V(r)=\min\left\{\frac{m_V(r)}{U_V(r)},1\right\}.
\]
By \propref{prop:rational-target-estimates}, for every $\eps>0$,
\begin{equation}\label{eq:DV-first-final}
 M_A(r)
 \leq m_V(r)+n(\rho+1+\eps)\log\frac1{1-r}
 +O_{A,V,f,\eps}(1).
\end{equation}
The same proposition also gives
\begin{equation}\label{eq:DV-second-final}
 M_A(r)
 \leq T(r,f)+n\Nbar(r,\infty,f)-N_V(r)
 +n(\rho+1+\eps)\log\frac1{1-r}
 +O_{A,V,f,\eps}(1).
\end{equation}
Multiply \eqref{eq:DV-first-final} by $1-\alpha_V(r)$ and
\eqref{eq:DV-second-final} by $\alpha_V(r)$, and add.  Jensen's formula gives
\[
 m_V(r)+N_V(r)=U_V(r)+O_{D_Vf}(1),
\]
and the same argument as in \lemref{lem:interpolation} yields
\begin{align}
 M_A(r)
 &\leq
 \alpha_V(r)\bigl(T(r,f)+n\Nbar(r,\infty,f)\bigr) \notag\\
 &\quad+n(\rho+1+\eps)\log\frac1{1-r}
 +O_{A,V,f,\eps}(1).
 \label{eq:DV-interpolated-final}
\end{align}
Choose $r_j\to1^-$ such that
\[
 \frac{m(r_j,0,D_Vf)}{T(r_j,D_Vf)}
 \longrightarrow\delta(0,D_Vf).
\]
Then
\[
 \alpha_V(r_j)\longrightarrow\min\{\delta(0,D_Vf),1\}
 \leq\delta(0,D_Vf).
\]
Divide \eqref{eq:DV-interpolated-final} by $T(r_j,f)$ and let $j\to\infty$.
As in the proof of \thmref{thm:main}, we have
\[
 \limsup_{r\to1^-}\frac{\Nbar(r,\infty,f)}{T(r,f)}=k(f),
 \qquad
 \limsup_{r\to1^-}\frac{\log(1/(1-r))}{T(r,f)}
 =\frac1{\lambda(f)}.
\]
Hence
\[
 \limsup_{j\to\infty}\frac{M_A(r_j)}{T(r_j,f)}
 \leq
 \delta(0,D_Vf)\bigl(1+n k(f)\bigr)
 +\frac{n(\rho+1+\eps)}{\lambda(f)}.
\]
Since
\[
 \sum_{a\in A}\delta(a,f)
 \leq\liminf_{r\to1^-}\frac{M_A(r)}{T(r,f)}
 \leq\limsup_{j\to\infty}\frac{M_A(r_j)}{T(r_j,f)},
\]
letting $\eps\downarrow0$ proves \eqref{eq:rational-target-extension} for the fixed finite set $A$.

\end{proof}

\section{Proof of Theorem~\ref{thm:valiron-rational-target-extension}}
\label{sec:valiron-proof}

\begin{proof}[Proof of Theorem~\ref{thm:valiron-rational-target-extension}]
Since $\alpha(f)>0$, the characteristic $T(r,f)$ tends to infinity as
$r\to1^-$.  Thus $f$ is not rational and, in particular, $f\notin V$.  The assertion is immediate for $A=\varnothing$, so fix
$A=\{a_1,\ldots,a_q\}\subset V$, with $q\geq1$, and set
\[
 g_\nu=f-a_\nu,
 \qquad
 \mathcal E_A(r)=\frac1{2\pi}\int_0^{2\pi}
 \log^+\max_{1\leq\nu\leq q}
 \left|\frac{D_Vg_\nu}{g_\nu}(re^{i\theta})\right|\dd\theta.
\]
Write $m_V(r)=m(r,0,D_Vf)$, $N_V(r)=N(r,0,D_Vf)$, and
$U_V(r)=T(r,D_Vf)$.  Since $D_Vg_\nu=D_Vf$, integration of
\eqref{eq:target-separation} gives
\[
 M_A(r)\leq m_V(r)+\mathcal E_A(r)+O_{A,V,f}(1).
\]
For the second estimate, use the pointwise inequality
\[
 \log^+\frac1{|g_\nu|}
 \leq\log^+|f|+
 \log^+\left|\frac{D_Vg_\nu}{g_\nu}\right|
 -\log|D_Vf|+\log2+\log^+|a_\nu|
\]
and integrate.  The terms involving $a_\nu$ and the nonzero rational
functions $a_i-a_j$ contribute $O_A(1)$.  Jensen's formula for $D_Vf$ and
\eqref{eq:DV-pole-count} give
\[
 M_A(r)\leq T(r,f)+n\Nbar(r,\infty,f)-N_V(r)
              +\mathcal E_A(r)+O_{A,V,f}(1).
\]
For all sufficiently large $r$, put
\[
 \beta_V(r)=\min\left\{\frac{m_V(r)}{U_V(r)},1\right\}.
\]
Multiply the first inequality by $1-\beta_V(r)$ and the second by
$\beta_V(r)$, and add.  Jensen's formula gives
\[
 m_V(r)+N_V(r)=U_V(r)+O_{D_Vf}(1).
\]
The calculation in \lemref{lem:interpolation} therefore gives
\begin{equation}\label{eq:valiron-interpolated}
 M_A(r)\leq
 \beta_V(r)\bigl(T(r,f)+n\Nbar(r,\infty,f)\bigr)
 +\mathcal E_A(r)+O_{A,V,f}(1).
\end{equation}

It remains to bound $\mathcal E_A(r)/T(r,f)$ along a sequence tending to $1$.
Suppose first that $\alpha(f)<+\infty$.  Then $f$, and hence every $g_\nu$,
has order zero.  Lemma~\ref{lem:simultaneous-finite-order}, with $\sigma=0$,
shows that
for every $\eps>0$,
\[
 \mathcal E_A(r)\leq n(1+\eps)\log\frac1{1-r}+O_{A,V,f,\eps}(1).
\]
Choose $r_j\to1^-$ so that
\[
 \frac{T(r_j,f)}{\log(1/(1-r_j))}\longrightarrow\alpha(f).
\]
It follows, after letting $\eps\downarrow0$, that
\begin{equation}\label{eq:valiron-error-sequence-finite}
 \limsup_{j\to\infty}\frac{\mathcal E_A(r_j)}{T(r_j,f)}
 \leq\frac{n}{\alpha(f)}.
\end{equation}

Now suppose that $\alpha(f)=+\infty$.  Since each $a_\nu$ is rational,
\[
 T(r,g_\nu)\leq T(r,f)+O_{A}(1).
\]
Let $\mathcal H$ be the finite family of functions $g_\nu^{(k)}$ with
$0\leq k<n$ for which $g_\nu$ is not rational.  Every member of
$\mathcal H$ is nonconstant.  Fix $r_0\in(0,1)$ and choose, for each
$h\in\mathcal H$, a constant $c_h\ne0$ such that
$T(r_0,c_hh)\geq e$.  Apply
\cite[Theorem~3.4.1\textup{(2)}]{CherryYe2001} with
$R=1$, $\phi(r)=1-r$, and $\psi(x)=x$.  Taking the union of the resulting exceptional sets gives a measurable set $E\subset(0,1)$ such that
\[
 \int_E\frac{\dd r}{1-r}<+\infty
\]
and, simultaneously for $h\in\mathcal H$,
\[
 m\!\left(r,\frac{h'}h\right)
 \leq2\log^+T(r,h)+\log\frac1{1-r}+O_h(1)
 \qquad(r\notin E).
\]
Here multiplication by $c_h$ leaves $h'/h$ unchanged and changes the
characteristic by at most a bounded term.
If $\mathcal H$ is empty, take $E=\varnothing$.

At every pole of a meromorphic function $h$, differentiation increases the pole order by one.  Together with
\[
 \log^+|h'|\leq\log^+|h|+
 \log^+\left|\frac{h'}h\right|,
\]
this gives
\[
 T(r,h')\leq2T(r,h)+m\!\left(r,\frac{h'}h\right)+O_h(1).
\]
Induction using this estimate shows that
\[
 T\bigl(r,g_\nu^{(k)}\bigr)
 =O_{A,f,n}\left(T(r,g_\nu)+\log\frac1{1-r}\right)
 \qquad(0\leq k\leq n,\ r\notin E)
\]
whenever $g_\nu$ is not rational.  Applying the same estimate once more and using
\[
 \frac{g_\nu^{(j)}}{g_\nu}
 =\prod_{k=0}^{j-1}
 \frac{g_\nu^{(k+1)}}{g_\nu^{(k)}}
 \qquad(1\leq j\leq n)
\]
therefore gives
\[
 m\!\left(r,\frac{g_\nu^{(j)}}{g_\nu}\right)
 =O_{A,f,n}\left(
 \log^+T(r,g_\nu)+\log\frac1{1-r}\right)
 \qquad(r\notin E).
\]
If $g_\nu$ is rational, then $D_Vg_\nu/g_\nu$ is rational and has bounded proximity.  Since the coefficients of $D_V$ are rational, the preceding
estimates and the finiteness of the family give
\begin{equation}\label{eq:valiron-exceptional-error}
 \mathcal E_A(r)=O_{A,V,f}\left(
  \log^+T(r,f)+\log\frac1{1-r}\right)
 \qquad(r\to1^-,\ r\notin E).
\end{equation}
Choose $s_j\to1^-$ with
\[
 \frac{T(s_j,f)}{\log(1/(1-s_j))}\longrightarrow+\infty,
\]
and define $t_j>s_j$ by
\[
 \log\frac1{1-t_j}=2\log\frac1{1-s_j}.
\]
The logarithmic measure of $[s_j,t_j]$ is
\[
 \int_{s_j}^{t_j}\frac{\dd r}{1-r}
 =\log\frac1{1-s_j}\longrightarrow+\infty.
\]
Hence, for all large $j$, we may choose
$r_j\in[s_j,t_j]\setminus E$.  Monotonicity of $T(r,f)$ then gives
\[
 \frac{T(r_j,f)}{\log(1/(1-r_j))}
 \geq\frac{T(s_j,f)}{2\log(1/(1-s_j))}\longrightarrow+\infty.
\]
Equation~\eqref{eq:valiron-exceptional-error} therefore implies
\begin{equation}\label{eq:valiron-error-sequence-infinite}
 \frac{\mathcal E_A(r_j)}{T(r_j,f)}\longrightarrow0
 =\frac{n}{\alpha(f)}.
\end{equation}

Whether $0<\alpha(f)<+\infty$ or $\alpha(f)=+\infty$, we have constructed a sequence $r_j\to1^-$ for which
\[
 \limsup_{j\to\infty}\frac{\mathcal E_A(r_j)}{T(r_j,f)}
 \leq\frac{n}{\alpha(f)}.
\]
Moreover, by the definition of Valiron deficiency,
\[
 \limsup_{j\to\infty}\beta_V(r_j)
 \leq\min\{\Delta(0,D_Vf),1\}\leq\Delta(0,D_Vf).
\]
Divide \eqref{eq:valiron-interpolated} by $T(r_j,f)$, use
\[
 \limsup_{r\to1^-}\frac{\Nbar(r,\infty,f)}{T(r,f)}=k(f),
\]
and let $j\to\infty$.  We obtain
\[
 \limsup_{j\to\infty}\frac{M_A(r_j)}{T(r_j,f)}
 \leq\Delta(0,D_Vf)\bigl(1+n k(f)\bigr)
       +\frac{n}{\alpha(f)}.
\]
Finally,
\[
 \sum_{a\in A}\delta(a,f)
 \leq\liminf_{r\to1^-}\frac{M_A(r)}{T(r,f)}
 \leq\limsup_{j\to\infty}\frac{M_A(r_j)}{T(r_j,f)},
\]
which proves \eqref{eq:valiron-rational-target-extension}.
\end{proof}

\begin{proposition}\label{prop:valiron-sharpness}
In \eqref{eq:improved-SheaSons-42}, the coefficient $n$ of $1/\alpha(f)$ is
best possible for every positive integer $n$, and the coefficient of the
Valiron-deficiency term cannot be decreased.  Neither term can be omitted.
\end{proposition}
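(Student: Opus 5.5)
The plan is to reuse the lacunary-product construction of \propref{prop:positive-order} and \propref{prop:nth-order-sharpness}, tuned so that $f$ has order zero and $\alpha(f)$ is finite. The argument then rests on two-sided control of $\Delta(0,f^{(n)})$.

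\textbf{Construction.} Fix $n\geq1$, an integer $M>n$, and a large $K>1$. I choose $s_1$ small and set
\[
 s_{j+1}=s_j^{K},\qquad r_j=1-s_j,\qquad n_j=\lfloor s_j^{-1}\rfloor,\qquad L_j=\log\frac1{s_j}.
\]
Then $L_{j+1}=KL_j$ and $\log n_j=L_j+o(1)$. With
\[
 P(z)=\Bigl(1-\frac za\Bigr)\prod_{j}\Bigl(1-\Bigl(\frac z{r_j}\Bigr)^{n_j}\Bigr),\qquad f=P^{-M},
\]
the exact zero count gives, uniformly for $r_j\leq r<r_{j+1}$,
\[
 N_P(r)=L_j(1+o(1))+O(1),\qquad m(r,0,P)=O(j),
\]
since $n_j\log(r/r_j)\leq n_js_j=O(1)$. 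Because $N_P\sim L_j$ is compared with $\log\frac1{1-r}$, which runs from $L_j$ to $L_{j+1}$ on this interval, we get $\rho(f)=0$, $\lambda(f)=M/K>0$ and $\alpha(f)=M$, with the limsup attained near $r=r_j$. As before, $k(f)=1/M$ and $\delta(0,f)=1$ because $f$ omits $0$.

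\textbf{Lower bound for $\Delta(0,f^{(n)})$.} On the circles $|z|=R_j=r_je^{-\eta/n_j}$, the computation \eqref{eq:nth-sharpness-derivative-circle} carries over verbatim, with $\rho$ replaced by $0$. It gives
\[
 T(R_j,f^{(n)})=(M+n)A_j,\qquad m(R_j,0,f^{(n)})=MA_j-n\log n_j+O(1),\qquad A_j\sim L_j.
\]
Hence $\Delta(0,f^{(n)})\geq (M-n)/(M+n)$. Combined with \thmref{thm:valiron-rational-target-extension}, this already yields
\[
 1\leq \Delta(0,f^{(n)})\Bigl(1+\frac nM\Bigr)+\frac nM .
\]

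\textbf{Upper bound for $\Delta(0,f^{(n)})$ (the main obstacle).} For sharpness of the coefficient $n$, I need the matching upper bound $\Delta(0,f^{(n)})\leq (M-n)/(M+n)$, which must hold for all $r$ and not only on the circles $|z|=R_j$. I would write
\[
 m(r,0,f^{(n)})\leq m(r,0,f)+m\bigl(r,f/f^{(n)}\bigr),\qquad m(r,0,f)=MN_P(r)+O(j),
\]
and try to prove $m(r,f/f^{(n)})\leq -n\log n_j+o(L_j)$ uniformly on $[r_j,r_{j+1})$ (with the index shifted near the right endpoint). The key input is the identity
\[
 \frac{z^nf^{(n)}}{n_j^nf}=H_{M,n}(u_j)+o(1),
\]
where $H_{M,n}(w)\to(-M)^n\neq0$ as $w\to\infty$ and $H_{M,n}(w)\sim Mw$ near $0$. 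Thus $|f^{(n)}/f|\gtrsim n_j^n$ except on the part of the circle near the finitely many zeros of $H_{M,n}(u_j)$. Each such zero $w_0$ has $|w_0|$ bounded above and below, so it is met only when $|u_j|=(r/r_j)^{n_j}$ lies in a bounded range. That happens in a thin band $r-r_j=O(s_j)$, where a standard $\log^-$ estimate for a polynomial in $u_j$ shows the exceptional arcs contribute only $O(1)$. Near $r_{j+1}$ the index $j+1$ takes over, with $n_{j+1}>n_j$, which only helps. Together with $T(r,f^{(n)})=(M+n)N_P(r)+O(j)$, this gives
\[
 \frac{m(r,0,f^{(n)})}{T(r,f^{(n)})}\leq \frac{M-n}{M+n}+o(1),
\]
and hence equality in \eqref{eq:improved-SheaSons-42}, with both right-hand terms strictly positive.

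\textbf{Conclusions.} Replacing the coefficient $n$ by $c<n$ gives a right-hand side $1-(n-c)/M<1=\delta(0,f)$, a contradiction. Decreasing the coefficient $1+nk(f)$ of the Valiron term fails for the same reason, because $\Delta(0,f^{(n)})>0$. Omitting the Valiron term leaves $n/M<1$, and omitting $n/\alpha(f)$ leaves $(M-n)/M<1$; both fail.
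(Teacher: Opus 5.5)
There is a genuine gap, and it sits in the construction itself, before the step you flag as the main obstacle. With $n_k=\lfloor s_k^{-1}\rfloor$, each completed level $k<j$ contributes only $n_k\log(r/r_k)=n_ks_k(1+o(1))=1+o(1)$ to $N_P(r)$ for $r_j\leq r<r_{j+1}$. The current level contributes at most $n_js_j(1+o(1))=O(1)$. Hence $N_P(r)=j+O(1)$, not $L_j(1+o(1))$.

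You have carried over the relation $n_{j-1}s_{j-1}=L_j(1+o(1))$ from \propref{prop:positive-order}. There it comes from $n_{j-1}\approx s_{j-1}^{-(\rho+1)}$ and $L_j=s_{j-1}^{-\rho}$, and it degenerates when $\rho=0$. Together with your own bound $m(r,0,P)=O(j)$, this gives $T(r,f)=O(j)$, while $\log\frac1{1-r}\geq L_j=K^{j-1}L_1$. So $\lambda(f)=\alpha(f)=0$. The hypothesis $0<\alpha(f)$ of \corref{cor:improved-SheaSons-42} fails, and the right side of \eqref{eq:improved-SheaSons-42} is $+\infty$.

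The circle computation fails for the same reason. On $|z|=R_j$ one has $n\log n_j\sim nL_j\gg MA_j\asymp Mj$. So $\log|f^{(n)}|$ is large and positive there, and your formula $m(R_j,0,f^{(n)})=MA_j-n\log n_j$ would be negative.

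The missing idea is a balance among three quantities:
\begin{itemize}
\item the number of levels below $r$ must grow like $\log\frac1{1-r}$;
\item each level must contribute a fixed amount to $N_P$;
\item the logarithm of the number of zeros on the current level must also be comparable to $\log\frac1{1-r}$.
\end{itemize}
The paper achieves this with the geometric product $P=\prod_j C(z^{b^j})$ and $r_j=e^{-\tau/b^j}$. Uniformly on $[r_j,r_{j+1})$ this gives $N_P(r)=\tau j+O(1)$, $\log\frac1{1-r}=j\log b+O(1)$, and $\log N_j=j\log b$. It then takes $f=P^{-1}$, so that $k(f)=1$ and $\alpha(f)=\tau/\log b$.

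Because every annulus looks the same after rescaling, the paper proves the uniform absolute-mean estimates \eqref{eq:valiron-product-mean} and \eqref{eq:valiron-derivative-mean} on \emph{every} circle. It does this by rescaling arcs of length $2\pi/N_j$ and running a compactness argument with Jensen's formula. As a result $m(r,0,f^{(n)})/T(r,f^{(n)})$ actually converges, which resolves the limsup issue you correctly identified and computes $\Delta(0,f^{(n)})$ exactly.

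Your sketch of the all-radii upper bound would need an argument of this kind in any case. That sketch relies on exceptional arcs contributing $O(1)$ and on the index $j+1$ ``taking over'' near $r_{j+1}$. But \eqref{eq:nth-sharpness-derivative-circle} was proved only on the special circles $|z|=R_j$.
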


\begin{proof}
Fix $n\geq1$.  We use a variant of the product construction in
\propref{prop:positive-order}.
Choose $\tau>0$ sufficiently large and set $b=\lceil\tau^2\rceil$ so that
\begin{equation}\label{eq:valiron-parameter-choice}
 \tau>n\log b.
\end{equation}
Set
\[
 N_j=b^j,
 \qquad r_j=e^{-\tau/N_j},
 \qquad a=e^{-\tau},
 \qquad C(w)=\frac{1-a^{-1}w}{1-aw},
\]
and define
\begin{equation}\label{eq:valiron-regular-product}
 P(z)=\prod_{j=1}^{\infty}C\bigl(z^{N_j}\bigr),
 \qquad f=P^{-1}.
\end{equation}
The product converges locally uniformly in $\D$.  Its zeros are the simple zeros of $1-a^{-1}z^{N_j}$, all of modulus $r_j$, and its denominators have
no zeros in $\D$.

Write $N_P(r)=N(r,0,P)$.  If $r_j\leq r<r_{j+1}$ and $t=-\log r$, then
\begin{gather}
 N_P(r)
 =\sum_{k=1}^jN_k\log\frac r{r_k}
   =j\tau-t\sum_{k=1}^jN_k
   =\tau j+O(1),                                      \label{eq:valiron-NP}\\
 \log\frac1{1-r}=j\log b+O(1).                       \label{eq:valiron-log-radius}
\end{gather}
Both error terms are uniform in the indicated annulus.

We record the estimate needed for the $n$th derivative.  Uniformly for
$r_j\leq r<r_{j+1}$,
\begin{align}
 \frac1{2\pi}\int_0^{2\pi}
 \left|\log|P(re^{i\theta})|-N_P(r)\right|\dd\theta
 &=O(1),                                               \label{eq:valiron-product-mean}\\
 \frac1{2\pi}\int_0^{2\pi}
 \left|\log\left|
 \frac{f^{(n)}(re^{i\theta})}{N_j^nf(re^{i\theta})}
 \right|\right|\dd\theta
 &=O(1).                                               \label{eq:valiron-derivative-mean}
\end{align}
Here and below the integrals are understood in the usual improper sense at the finitely many zeros or poles on a circle.

For completeness, we verify the part of these estimates not present in the proof of \propref{prop:nth-order-sharpness}.  Put
\[
 D_j=\sum_{k<j}N_k,
 \qquad w=z^{N_j},
 \qquad
 F(w)=\prod_{m=0}^{\infty}C\bigl(w^{b^m}\bigr),
\]
and, on $|z|=r$, put
\[
 E_j(z)=\prod_{k<j}
 \frac{1-az^{-N_k}}{1-az^{N_k}}.
\]
Then
\begin{equation}\label{eq:valiron-product-splitting}
 P(z)=(-a^{-1})^{j-1}z^{D_j}E_j(z)F(w).
\end{equation}
On these circles
\[
 a\leq|w|<a^{1/b},
 \qquad
 |az^{-N_k}|\leq a^{1-1/b}\quad(k<j).
\]
We first prove the absolute-mean estimate.  For every $q\geq0$,
\[
 \frac1{2\pi}\int_0^{2\pi}
 \left|\log|1-qe^{i\theta}|-\log^+q\right|\dd\theta
 \leq C_0.
\]
Indeed, when $0\leq q\leq1$, the mean of $\log|1-qe^{i\theta}|$ is zero and the positive part is bounded by $\log2$, so the positive and negative parts have equal bounded integrals.  When $q>1$, factor out $q$ and apply the first case to $q^{-1}$.

Put $t=-\log r$.  For $k<j$, both $ar^{-N_k}$ and $ar^{N_k}$ are at most
$q_0=ae^{\tau/b}<1$.  Since $tN_k\leq\tau/b$, the mean-value theorem gives,
pointwise in $\theta$,
\[
 \left|\log|1-ar^{-N_k}e^{iN_k\theta}|
       -\log|1-ar^{N_k}e^{iN_k\theta}|\right|
 \leq \frac{2ae^{\tau/b}}{1-q_0}\,tN_k.
\]
Consequently,
\[
 \frac1{2\pi}\int_0^{2\pi}
 \left|\log|E_j(re^{i\theta})|\right|\dd\theta
 \leq C_1tD_j=O(1),
\]
because $tD_j\leq\tau/(b-1)$.

Write $R=r^{N_j}$.  On $a\leq R<a^{1/b}$, the mean of
$\log|F(Re^{i\phi})|$ is $\log(R/a)$.  The estimate for
$\log|1-qe^{i\theta}|$ above controls the numerator and denominator of the
factors with indices $m=0$ and $m=1$.  For $m\geq2$,
\[
 a^{-1}R^{b^m}\leq a^{b^{m-1}-1},
 \qquad aR^{b^m}\leq a^{b^{m-1}+1}.
\]
Applying the inequality $\bigl|\log|1-u|\bigr|\leq |u|/(1-|u|)$ to the numerator and denominator and summing over $m\geq2$ gives
\[
 \frac1{2\pi}\int_0^{2\pi}
 \left|\log|F(Re^{i\phi})|-\log(R/a)\right|\dd\phi=O(1)
\]
uniformly in $R$.  The logarithms of the moduli of the constant and monomial factors in \eqref{eq:valiron-product-splitting} are independent of $\theta$,
and
\[
 (j-1)\tau+D_j\log r+\log(R/a)=N_P(r).
\]
Combining the bounds for $E_j$ and $F$ proves
\eqref{eq:valiron-product-mean}.

Let $\mathcal E=z\,\dd/\dd z$ and $\mathcal D=w\,\dd/\dd w$.  Direct
differentiation of $E_j$ gives, for every fixed $\ell\geq1$,
\begin{equation}\label{eq:valiron-Ej-derivatives}
 N_j^{-\ell}\left|\mathcal E^\ell\log E_j(z)\right|
 \leq C_\ell\frac{a^{1-1/b}}{b^\ell-1}.
\end{equation}
If $E_j$ is omitted from \eqref{eq:valiron-product-splitting}, the normalized expression $\mathcal E^nf/(N_j^nf)$ becomes
\begin{equation}\label{eq:valiron-model-derivative}
 K_{d_j,n}(w)
 =F(w)\bigl(\mathcal D-d_j\bigr)^nF(w)^{-1},
 \qquad d_j=\frac{D_j}{N_j}.
\end{equation}
Since $d_j\to1/(b-1)>0$, the function $K_{d_j,n}$ is not identically zero for all sufficiently large $j$.  Indeed,
$K_{d_j,n}(0)=(-d_j)^n\ne0$ for those $j$.  After multiplication by
\[
 (1-a^{-1}w)^n(1-a^{-1}w^b)^n,
\]
the functions in \eqref{eq:valiron-model-derivative} are analytic on a fixed neighbourhood of the closed annulus.  Jensen's formula therefore bounds the means over $|w|=R$ of both the positive and negative parts of
$\log|K_{d_j,n}|$, uniformly for large $j$.

We now restore $E_j$ and estimate $f^{(n)}$.  Divide $|z|=r$ into $N_j$
equal arcs.  On the $k$th arc put
\[
 z_{j,k}=re^{2\pi i k/N_j},\qquad
 \phi_{j,k}(x)=z_{j,k}\left(1+\frac{x}{N_j}\right).
\]
We show that the integral of the absolute logarithm on every rescaled arc is bounded by a constant independent of $j$, $k$, and $r$.

Suppose otherwise.  Choose sequences $j_\nu\to\infty$, $k_\nu$, and
$r_{j_\nu}\leq r_\nu<r_{j_\nu+1}$ for which the corresponding arc
integrals tend to infinity.  Write
\[
 N_\nu=N_{j_\nu},\qquad z_\nu=z_{j_\nu,k_\nu},\qquad
 \phi_\nu=\phi_{j_\nu,k_\nu},\qquad q_\nu=r_\nu^{N_\nu}.
\]
After taking a subsequence, $q_\nu\to q\in[a,a^{1/b}]$.  Put
$y=-\log q$.  Choose real numbers $L$ and $\varepsilon>0$ such that
\[
 y-b\tau<L<y-\tau,\qquad
 0<2\varepsilon<y-\frac{\tau}{b^2},
\]
and let $\Omega$ be the rectangle
\[
 L<\Real{x}<2\varepsilon,\qquad -1<\Imag{x}<2\pi+1.
\]
It contains $i[0,2\pi]$ and the point $x_0=y-\tau$.  For all sufficiently
large $\nu$, one has $\phi_\nu(\Omega)\subset\D$.  Since
\[
 N_\nu\log\left(1+\frac{x}{N_\nu}\right)=x+O(N_\nu^{-1})
\]
uniformly for $x\in\overline\Omega$, the only possible zeros of
$P\circ\phi_\nu$ in $\Omega$ are contributed by levels $j_\nu$ and
$j_\nu+1$, once $\nu$ is large.  The lower levels lie to the left of
$\Omega$, while every level $j_\nu+m$ with $m\geq2$ lies to its right.

Set
\[
 S_\nu(x)=\prod_{m=0}^{1}
 \left(1-a^{-1}\phi_\nu(x)^{b^mN_\nu}\right),
 \qquad G_\nu=f\circ\phi_\nu.
\]
The function $S_\nu G_\nu$ is holomorphic and has no zeros in $\Omega$.
Up to a nonzero constant, it has the exact factorization
\[
 S_\nu G_\nu
 =\phi_\nu^{-D_{j_\nu}}
 \prod_{m=1}^{j_\nu-1}
 \frac{1-a\phi_\nu^{N_\nu/b^m}}
      {1-a\phi_\nu^{-N_\nu/b^m}}
 \prod_{m=0}^{1}\left(1-a\phi_\nu^{b^mN_\nu}\right)
 \prod_{m=2}^{\infty}C\left(\phi_\nu^{b^mN_\nu}\right)^{-1}.
\]
For every compact $K\Subset\Omega$, logarithmic differentiation of the
lower blocks gives
\[
 \left|D_x\log
 \frac{1-a\phi_\nu^{N_\nu/b^m}}
      {1-a\phi_\nu^{-N_\nu/b^m}}
 \right|\leq C_Kb^{-m}\qquad(m\geq1),
\]
and logarithmic differentiation of the upper tail gives
\[
 u_{\nu,m}=\phi_\nu^{b^mN_\nu},\qquad
 a^{-1}|u_{\nu,m}(x)|\leq
 \exp\left(-c_K\tau b^{m-1}\right),\qquad
 \frac{u_{\nu,m}'(x)}{u_{\nu,m}(x)}
 =\frac{b^m}{1+x/N_\nu}
\]
for $m\geq2$, and hence
\[
 \left|D_x\log C\left(\phi_\nu^{b^mN_\nu}\right)^{-1}\right|
 \leq C_Kb^m\exp\left(-c_K\tau b^{m-1}\right)
 \qquad(m\geq2).
\]
The denominators in the first estimate stay uniformly away from zero on
$K$.  The second estimate follows from the positive distance between $K$
and the rescaled zeros of the levels $j_\nu+m$, $m\geq2$.

After passing to a diagonal subsequence, we may assume that
$e^{2\pi i k_\nu/b^m}$ converges for every fixed $m\geq1$.  The individual lower blocks in the displayed factorization then converge locally uniformly.
The upper blocks converge because $q_\nu\to q$, and
$D_{j_\nu}/N_\nu\to1/(b-1)$.  The two preceding estimates and the
Weierstrass $M$-test show that the logarithmic derivatives of the normalized functions
\[
 H_\nu=c_\nu S_\nu G_\nu,\qquad H_\nu(x_*)=1,
\]
converge locally uniformly on $\Omega$, where $x_*\in\Omega$ is fixed.
Since $\Omega$ is simply connected,
\[
 H_\nu(x)=\exp\left(
 \int_{x_*}^{x}\frac{H_\nu'(\zeta)}{H_\nu(\zeta)}\dd\zeta
 \right).
\]
It follows that $H_\nu\to H_\infty$ locally uniformly, where
$H_\infty$ is holomorphic and zero-free.

We also have
\[
 S_\nu\longrightarrow
 S_\infty(x)=(1-a^{-1}qe^x)\bigl(1-a^{-1}(qe^x)^b\bigr)
\]
locally uniformly.  Thus
$\widehat G_\nu:=c_\nu G_\nu=H_\nu/S_\nu$ converges meromorphically to
$G_\infty=H_\infty/S_\infty$.  The first factor of $S_\infty$ has a simple
zero at $x_0=\log(a/q)=y-\tau\in\Omega$, while
$H_\infty(x_0)\ne0$.  Therefore $G_\infty$ has a pole and
$G_\infty^{(n)}\not\equiv0$.

Define holomorphic functions on $\Omega$ by
\[
 \mathcal A_\nu=S_\nu^{n+1}\widehat G_\nu^{(n)},
 \qquad
 \mathcal B_\nu=S_\nu^nH_\nu.
\]
Since $S^{n+1}D^n(H/S)$ is a differential polynomial in the jets of $S$
and $H$, local uniform convergence and Cauchy's estimates show that
$\mathcal A_\nu$ and $\mathcal B_\nu$ converge locally uniformly.  Neither limit is identically zero.  Moreover,
\[
 \frac{\widehat G_\nu^{(n)}}{\widehat G_\nu}
 =\frac{\mathcal A_\nu}{\mathcal B_\nu}.
\]
We use the following elementary consequence of Jensen's formula.  If
$A_\nu$ and $B_\nu$ converge locally uniformly on a neighbourhood of
$i[0,2\pi]$ to holomorphic functions that are not identically zero, and
$N_\nu\to\infty$, then
\[
 \int_0^{2\pi}\left|\log\left|
 \frac{A_\nu(N_\nu(e^{it/N_\nu}-1))}
      {B_\nu(N_\nu(e^{it/N_\nu}-1))}
 \right|\right|\dd t=O(1).
\]
To verify this assertion, cover $i[0,2\pi]$ by finitely many discs whose
closures lie in the domain and whose enlarged boundaries contain no zero of the two limit functions.  Rouch\'e's theorem gives a uniform number of zeros on the enlarged discs.  Factoring those zeros leaves zero-free factors bounded above and below on the smaller discs.  The curves
$N(e^{it/N}-1)$ are uniformly bi-Lipschitz there, and the logarithm of each linear factor has a uniformly bounded integral.  This proves the assertion.

Apply it to $\mathcal A_\nu$ and $\mathcal B_\nu$.  The corresponding
integrals over the rescaled arcs are bounded along the extracted subsequence, contradicting their choice.  Hence the bound is uniform over all arcs.  On
the $k$th arc,
\[
 z=z_{j,k}e^{it/N_j}
   =\phi_{j,k}\bigl(N_j(e^{it/N_j}-1)\bigr),
 \qquad 0\leq t\leq2\pi,
\]
and the chain rule gives
\[
 \frac{f^{(n)}(z)}{N_j^nf(z)}
 =z_{j,k}^{-n}
 \frac{(f\circ\phi_{j,k})^{(n)}}{f\circ\phi_{j,k}}
 \bigl(N_j(e^{it/N_j}-1)\bigr).
\]
Since $\bigl|\log|z_{j,k}|\bigr|=O(N_j^{-1})$, each rescaled arc has a bounded integral of the absolute logarithm.  The change of variables $\dd\theta=\dd t/N_j$
and summation over the $N_j$ arcs prove
\eqref{eq:valiron-derivative-mean}.

Since $P(0)=1$, Jensen's formula and
\eqref{eq:valiron-product-mean} give
\[
 m(r,0,P)=O(1).
\]
Every pole of $f$ is simple.  Equations \eqref{eq:valiron-NP} and
\eqref{eq:valiron-log-radius} now yield
\begin{equation}\label{eq:valiron-example-data}
 T(r,f)=N_P(r)+O(1),
 \qquad
 \alpha(f)=\frac{\tau}{\log b}<+\infty,
 \qquad k(f)=1,
 \qquad \delta(0,f)=1.
\end{equation}

Equations \eqref{eq:valiron-product-mean} and
\eqref{eq:valiron-derivative-mean} show that
\[
 \frac1{2\pi}\int_0^{2\pi}\log|f^{(n)}(re^{i\theta})|\dd\theta
 =-N_P(r)+n\log N_j+O(1).
\]
The right side tends to $-\infty$ by
\eqref{eq:valiron-parameter-choice}.  Using the estimates
\eqref{eq:valiron-product-mean} and \eqref{eq:valiron-derivative-mean}, we
obtain $m(r,\infty,f^{(n)})=O(1)$.  Differentiation raises the order of every pole from $1$ to $n+1$.  Hence
\begin{align*}
 T(r,f^{(n)})&=(n+1)N_P(r)+O(1),\\
 m(r,0,f^{(n)})&=N_P(r)-n\log N_j+O(1).
\end{align*}
It follows that
\begin{equation}\label{eq:valiron-example-Delta}
 \Delta(0,f^{(n)})
 =\frac{\tau-n\log b}{(n+1)\tau}>0.
\end{equation}

The function $f$ omits $0$.  Corollary~\ref{cor:improved-SheaSons-42},
\eqref{eq:valiron-example-data}, and
\eqref{eq:valiron-example-Delta} give
\[
 1\leq\sum_{a\ne\infty}\delta(a,f)
 \leq\Delta(0,f^{(n)})(1+n k(f))+\frac n{\alpha(f)}
 =\frac{\tau-n\log b}{\tau}+\frac{n\log b}{\tau}=1.
\]
Thus equality holds.  Since both terms are positive, neither
coefficient can be decreased, and neither term can be omitted.
\end{proof}

\appendix

\section{Sharpness for arbitrary rational kernels}
\label{sec:kernelwise-sharpness}

We prove here that the estimates in
\thmref{thm:rational-target-extension} and
\thmref{thm:valiron-rational-target-extension} are attained for every
rational kernel, not only when $D_V=D^n$.

We first record two consequences of the structure of $D_V$.  Order a basis
$v_1,\ldots,v_n$ of $V$ and put
\[
 W_j=W(v_1,\ldots,v_j),\qquad W_0=1,\qquad
 \psi_j=\frac{W_j}{W_{j-1}},\qquad
 q_j=\frac{\psi_j'}{\psi_j}.
\]
The Wronskian definition of $D_V$ gives the factorization
\begin{equation}\label{eq:kernel-Polya-factorization}
 D_V=(D-q_n)\cdots(D-q_1).
\end{equation}
Each $q_j$ is the logarithmic derivative of a rational function, so its
finite poles are simple.  Write
\[
 D_V=D^n+\sum_{\ell=0}^{n-1}c_\ell D^\ell,
 \qquad c_\ell\in\C(z).
\]
Expanding the factorization and using the simple poles of the $q_j$ shows that $c_\ell$ has pole order at most $n-\ell$ at every finite point.
Consequently, for all $r$ sufficiently close to $1$,
\begin{equation}\label{eq:kernel-coefficient-bound}
 \max_{|z|=r}|c_\ell(z)|
 =O_V\bigl((1-r)^{-(n-\ell)}\bigr).
\end{equation}

\begin{proposition}\label{prop:all-V-nevanlinna-sharpness}
Let $n\geq1$, let $V\subset\C(z)$ be an $n$-dimensional complex vector space, let
$a_0\in V$, let $\rho>0$, and let $M$ be an integer satisfying
\[
 M>n(\rho+1).
\]
There is a meromorphic function $f$ in $\D$ such that
\begin{equation}\label{eq:all-V-positive-data}
 \rho(f)=\rho,\qquad
 \lambda(f)=M,\qquad
 k(f)=\frac1M,\qquad
 \delta(a_0,f)=1,
\end{equation}
and
\begin{equation}\label{eq:all-V-positive-deficiency}
 \delta(0,D_Vf)=\frac{M-n(\rho+1)}{M+n}>0.
\end{equation}
For $A=\{a_0\}$, equality holds in
\eqref{eq:rational-target-extension}.  In particular,
\begin{equation}\label{eq:all-V-positive-equality}
 1=\delta(0,D_Vf)\left(1+\frac nM\right)
   +\frac{n(\rho+1)}M.
\end{equation}
Both terms on the right are strictly positive.  The dependence on $\rho$
cannot be omitted.
\end{proposition}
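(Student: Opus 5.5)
The plan is to take $f=a_0+F$, where $F=P^{-M}$ is the function of \propref{prop:nth-order-sharpness}, built from the product $P$ of \propref{prop:positive-order}. Since $a_0$ is rational, $T(r,f)=T(r,F)+O(1)$. Moreover, $f-a_0=F$ omits $0$, and the poles of $f$ in $\{|z|>r_*\}$ coincide with those of $F$, where $r_*$ is the largest modulus of a pole of $a_0$ in $\D$. These two facts transfer $\rho(f)=\rho$, $\lambda(f)=M$, $k(f)=1/M$ and $\delta(a_0,f)=1$ from $F$ up to bounded errors. Because $D_Va_0=0$ by \propref{prop:DV-kernel}, we have $D_Vf=D_VF$, so everything reduces to computing $\delta(0,D_VF)$.

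The key step is to show that, on the circles $|z|=R_j=r_je^{-\eta/n_j}$ of \propref{prop:nth-order-sharpness},
\[
 D_VF=F^{(n)}+\sum_{\ell<n}c_\ell F^{(\ell)}
 =F^{(n)}\bigl(1+o(1)\bigr).
\]
From the proof of \propref{prop:nth-order-sharpness}, $|F^{(\ell)}/F|=O(n_j^\ell)$ uniformly on these circles, and $|F^{(n)}/F|\asymp n_j^n$. The coefficient bound \eqref{eq:kernel-coefficient-bound} gives $|c_\ell|=O((1-R_j)^{-(n-\ell)})=O(s_j^{-(n-\ell)})$. Hence each lower term is $O\bigl((n_js_j)^{-(n-\ell)}\bigr)$ relative to $F^{(n)}$. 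Since $n_js_j\asymp s_j^{-\rho}\to\infty$ (here $\rho>0$ is used), these terms are negligible.

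With this in hand, the computations of \propref{prop:nth-order-sharpness} carry over verbatim. Exactly as there, one obtains
\[
 \log|D_VF|=-\bigl(M-n(\rho+1)\bigr)L_j+o(L_j)
\]
on $|z|=R_j$, so $m(R_j,\infty,D_VF)=0$. Poles of $D_VF$ away from the finitely many poles of the $c_\ell$ have order exactly $M+n$. The bounded set of exceptional poles contributes $O(1)$, and since the coefficient pole orders are at most $n-\ell$, they do not change this count at poles of $F$ except on a finite set. This gives
\[
 T(R_j,D_VF)=(M+n)A_j+O(1),\qquad
 m(R_j,0,D_VF)=MA_j-n\log n_j+O(1),
\]
and hence $\delta(0,D_Vf)\le (M-n(\rho+1))/(M+n)$.

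It remains to check that $T(r,D_Vf)>0$ for large $r$, which holds because $D_Vf$ has poles. Then \thmref{thm:rational-target-extension} with $A=\{a_0\}$ gives
\[
 1\le \delta(0,D_Vf)(1+n/M)+n(\rho+1)/M\le 1,
\]
so equality holds throughout, yielding \eqref{eq:all-V-positive-deficiency} and \eqref{eq:all-V-positive-equality}. Both terms are positive because $M>n(\rho+1)$. The $\rho$-dependence claim follows as before: $\delta(0,D_Vf)(1+n/M)+n/M=1-n\rho/M<1$.

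The main obstacle is the domination step for the lower-order terms. It relies on combining the uniform circle estimates $\mathcal E^qF=O(n_j^qF)$ with the coefficient growth bound, and the margin comes only from $n_js_j\to\infty$. One must also handle the poles of the $c_\ell$ inside $\D$ in the counting-function comparison, which I expect to contribute only bounded terms.
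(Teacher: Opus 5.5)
Your proposal is correct and follows the paper's proof essentially step for step. You take $f=a_0+P^{-M}$ with the product from \propref{prop:nth-order-sharpness}, use \eqref{eq:kernel-coefficient-bound} together with $n_js_j\asymp s_j^{-\rho}\to\infty$ to get $D_VP^{-M}=(P^{-M})^{(n)}(1+o(1))$ on $|z|=R_j$, redo the pole-counting and proximity computation, and close the chain of inequalities with \thmref{thm:rational-target-extension}. Your error term $O(1)$ in $m(R_j,0,D_Vf)$, where the paper writes $O(j)$, is also valid, since $P(0)=1$ makes the mean of $\log|P|$ over $|z|=R_j$ exactly $A_j$.
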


\begin{proof}
Use the sequences, product $P$, and circles $|z|=R_j$ from the proof of
\propref{prop:nth-order-sharpness}, and write $h=P^{-M}$ and
$A_j=N(R_j,0,P)$.  The calculations there give
\begin{align}
 A_j&=L_j(1+o(1)),&
 \log n_j&=(\rho+1)L_j+o(L_j),&
 1-R_j&\asymp e^{-L_j},                              \label{eq:all-V-scales}\\
 \log|P(z)|&=A_j+O(j),&
 \left|\frac{h^{(\ell)}(z)}{h(z)}\right|
   &=O(n_j^\ell)\quad(0\leq\ell\leq n),&
 \left|\frac{h^{(n)}(z)}{h(z)}\right|&\asymp n_j^n
 \label{eq:all-V-derivatives}
\end{align}
uniformly on $|z|=R_j$.

By \eqref{eq:kernel-coefficient-bound},
\eqref{eq:all-V-scales}, and \eqref{eq:all-V-derivatives}, for
$0\leq\ell<n$,
\[
 \left|\frac{c_\ell h^{(\ell)}}{h^{(n)}}\right|
 =O\bigl((1-R_j)^{-(n-\ell)}n_j^{\ell-n}\bigr)
 =O\bigl(e^{-\rho(n-\ell)L_j+o(L_j)}\bigr)=o(1).
\]
Hence, uniformly on the selected circles,
\begin{equation}\label{eq:all-V-dominance}
 \frac{D_Vh}{h}=\frac{h^{(n)}}h\bigl(1+o(1)\bigr),
 \qquad
 \left|\frac{D_Vh}{h}\right|\asymp n_j^n.
\end{equation}

At all but finitely many zeros of $P$, $D_Vh$ has a pole of order $M+n$.
Therefore
\[
 N(R_j,\infty,D_Vh)=(M+n)A_j+O(1).
\]
Equations \eqref{eq:all-V-scales}--\eqref{eq:all-V-dominance} give
\[
 \log|D_Vh|
 =-\bigl(M-n(\rho+1)\bigr)L_j+o(L_j)<0
\]
uniformly on $|z|=R_j$.  Thus $m(R_j,\infty,D_Vh)=0$ for all large $j$,
and averaging over $|z|=R_j$ gives
\[
 m(R_j,0,D_Vh)=MA_j-n\log n_j+O(j).
\]
It follows that
\begin{equation}\label{eq:all-V-positive-upper}
 \delta(0,D_Vh)
 \leq\frac{M-n(\rho+1)}{M+n}.
\end{equation}

Set $f=a_0+h$.  Adding the rational function $a_0$ preserves $\rho$,
$\lambda$, and $k$, and gives $\delta(a_0,f)=1$.  Hence
\propref{prop:nth-order-sharpness} yields
\eqref{eq:all-V-positive-data}.  Since $a_0\in V=\ker D_V$, $D_Vf=D_Vh$.
The function $D_Vf$ has infinitely many poles, so its characteristic is
positive for all sufficiently large radii.  Apply
\thmref{thm:rational-target-extension} with $A=\{a_0\}$.  Since
$\delta(a_0,f)=1$, \eqref{eq:all-V-positive-upper} gives
\[
\begin{aligned}
 1
 &\leq\delta(0,D_Vf)\left(1+\frac nM\right)
       +\frac{n(\rho+1)}M\\
 &\leq
 \frac{M-n(\rho+1)}{M+n}\left(1+\frac nM\right)
 +\frac{n(\rho+1)}M=1.
\end{aligned}
\]
This proves \eqref{eq:all-V-positive-deficiency} and
\eqref{eq:all-V-positive-equality}.  Finally,
\[
 \delta(0,D_Vf)\left(1+\frac nM\right)+\frac nM
 =1-\frac{n\rho}{M}<1,
\]
which proves the last assertion.
\end{proof}

\begin{proposition}\label{prop:all-V-valiron-sharpness}
Let $n\geq1$, let $V\subset\C(z)$ be an $n$-dimensional complex vector
space, let $a_0\in V$, and let $M\geq1$ be an integer.  There is a number
$\tau_0=\tau_0(V,M,n)>n\log2/M$ such that, for every $\tau\geq\tau_0$,
there is a meromorphic function $f$ in $\D$ such that
\begin{equation}\label{eq:all-V-Valiron-data}
 \rho(f)=0,\qquad
 \lambda(f)=\alpha(f)=\frac{M\tau}{\log2},\qquad
 k(f)=\frac1M,\qquad
 \delta(a_0,f)=1,
\end{equation}
and
\begin{equation}\label{eq:all-V-Valiron-deficiency}
 \delta(0,D_Vf)=\Delta(0,D_Vf)
 =\frac{M\tau-n\log2}{(M+n)\tau}>0.
\end{equation}
For $A=\{a_0\}$, equality holds in both
\eqref{eq:rational-target-extension} and
\eqref{eq:valiron-rational-target-extension}.  In particular,
\begin{equation}\label{eq:all-V-Valiron-equality}
 1=\Delta(0,D_Vf)\left(1+\frac nM\right)
   +\frac n{\alpha(f)}.
\end{equation}
Both terms on the right are strictly positive.  The term $n/\alpha(f)$
cannot be omitted.
\end{proposition}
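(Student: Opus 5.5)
Let $M$ and $\tau$ be as in the statement. Mimic the construction of \propref{prop:valiron-sharpness}, now with $b=2$ and with the power $M$ as in \propref{prop:all-V-nevanlinna-sharpness}. Put $N_j=2^j$, $r_j=e^{-\tau/N_j}$ and $a=e^{-\tau}$. Let $P$ be the product \eqref{eq:valiron-regular-product} with $b=2$, and set $h=P^{-M}$ and $f=a_0+h$.

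\textbf{Step 1: characteristic data.} The estimates \eqref{eq:valiron-NP}, \eqref{eq:valiron-log-radius} and \eqref{eq:valiron-product-mean} are proved for general $b$ and do not use \eqref{eq:valiron-parameter-choice}. On $r_j\le r<r_{j+1}$ they give
\[
 T(r,h)=MN_P(r)+O(1)=M\tau j+O(1),\qquad \log\frac1{1-r}=j\log2+O(1).
\]
Hence $\rho(f)=0$ and $\lambda(f)=\alpha(f)=M\tau/\log2$. Every pole of $h$ has order $M$, so $k(f)=1/M$. Since $f-a_0=h$ omits $0$ and $a_0$ is rational, $\delta(a_0,f)=1$.

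\textbf{Step 2: the operator $D_V$.} Because $D_Va_0=0$, we have $D_Vf=D_Vh$. Write
\[
 \frac{D_Vh}{h}=\frac{h^{(n)}}{h}+\sum_{\ell<n}c_\ell\frac{h^{(\ell)}}{h}.
\]
The arc-rescaling compactness argument from the proof of \propref{prop:valiron-sharpness} transfers with $f$ replaced by $P^{-M}$; the only change is that $S_\nu$ carries the power $M$. It gives, uniformly for $r_j\le r<r_{j+1}$,
\[
 \frac1{2\pi}\int\Bigl|\log\Bigl|\frac{h^{(n)}}{N_j^nh}\Bigr|\Bigr|\,\dd\theta=O(1),
\]
and the same argument bounds $h^{(\ell)}/(N_j^\ell h)$ above in mean. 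This is where the real difficulty lies. By \eqref{eq:kernel-coefficient-bound}, $|c_\ell|=O((1-r)^{-(n-\ell)})=O(N_j^{n-\ell}\tau^{-(n-\ell)})$, since $1-r\asymp\tau/N_j$ on this annulus. Thus the lower-order terms are of the same scale $N_j^n$ as the leading term. They are not negligible; they carry a factor $\tau^{-(n-\ell)}$, with a constant depending on $V$.

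So I would redo the compactness argument directly for $N_j^{-n}D_Vh/h$ on the rescaled arcs. After rescaling, $N_j^{-(n-\ell)}c_\ell$ converges to a bounded function of $x$, of size $O_V(\tau^{-(n-\ell)})$. The limit operator is then a perturbation of $D^n$, and applied to the limiting $G_\infty$, which has a pole, it is not identically zero. This is clear because the limit operator stays monic of order $n$ and $G_\infty$ is not in its finite-dimensional kernel near the pole. Applying the Jensen-type lemma to this limit yields the same $O(1)$ absolute-mean bound. The requirement $\tau\ge\tau_0(V,M,n)$ is used to keep the coefficient bounds uniform, and to secure the positivity needed in Step 3.

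\textbf{Step 3: deficiencies of $D_Vf$.} The mean bounds give
\[
 \frac1{2\pi}\int\log|D_Vh|\,\dd\theta=-MN_P(r)+n\log N_j+O(1)=-(M\tau-n\log2)\,j+O(1)\to-\infty
\]
once $\tau>n\log2/M$. Together with the absolute-mean bounds this gives $m(r,\infty,D_Vh)=O(1)$. Away from the finitely many poles of the $c_\ell$, the poles of $D_Vh$ have order $M+n$, so $T(r,D_Vh)=(M+n)\tau j+O(1)$ and
\[
 m(r,0,D_Vh)=(M\tau-n\log2)\,j+O(1).
\]
These hold uniformly for all $r$, so the $\liminf$ and $\limsup$ coincide, which proves \eqref{eq:all-V-Valiron-deficiency}.

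Finally, substituting the values from Steps 1–3 into the right side of \eqref{eq:valiron-rational-target-extension} gives
\[
 \frac{M\tau-n\log2}{(M+n)\tau}\Bigl(1+\frac nM\Bigr)+\frac{n\log2}{M\tau}=1.
\]
The same computation with $\rho=0$ and $\lambda=\alpha$ applies to \eqref{eq:rational-target-extension}. The left side is $\delta(a_0,f)=1$, so equality holds in both. Both terms are strictly positive, so $n/\alpha(f)$ cannot be omitted.
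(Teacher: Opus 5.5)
Your proposal is correct. Steps~1 and~3 coincide with the paper's computations; the real difference lies in how you obtain the key estimate \eqref{eq:all-V-DV-log-mean}.

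\textbf{The paper's route.} The paper never examines the coefficients of $D_V$. It gauges by a polynomial $Q$ with $QV\subseteq\mathcal P_d$ and writes $D_Vh/h$ as the Wronskian quotient \eqref{eq:all-V-gauged-Wronskian}. It then extracts convergent rescaled kernels from compactness of the Grassmannian of $\mathcal P_d$. Nonvanishing of the limit numerator comes from the fact that $G_\infty$ has a pole and so cannot lie in the polynomial space $U_\infty$. A tall rectangle containing more than $d$ poles ensures that $Q_\infty$ cannot cancel all of them.

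\textbf{Your route.} You keep the monic form and rescale the coefficients themselves. To make Step~2 rigorous, use the rectangle $\Omega$ of \propref{prop:valiron-sharpness} with $b=2$, and put $q=\lim r_\nu^{N_\nu}$ and $y=-\log q\in[\tau/2,\tau]$. On $\Omega$ one has $-N_\nu\log|\phi_\nu(x)|\to y-\Real{x}>\tau/4$. Apply \eqref{eq:kernel-coefficient-bound} at the radius $|\phi_\nu(x)|$, not only on $|z|=r$. This shows that the rescaled coefficients $(z_\nu/N_\nu)^{n-\ell}c_\ell\circ\phi_\nu$ are holomorphic on $\Omega$ and of size $O((4/\tau)^{n-\ell})$ there. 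Montel's theorem then gives locally uniform convergence along a subsequence to holomorphic limits $\gamma_\ell$.

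\textbf{Nonvanishing of the limit.} The conclusion is immediate, but the reason is not that the kernel is finite-dimensional. At $x_0=y-\tau$ the function $G_\infty$ has a pole of order $M$, so $D^nG_\infty$ has a pole of order $M+n$ there. Each $\gamma_\ell D^\ell G_\infty$ has a pole of order at most $M+\ell$, because $\gamma_\ell$ is holomorphic at $x_0$. Hence the limit operator applied to $G_\infty$ is not identically zero.

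\textbf{Comparison.} Your approach trades the gauge, the Grassmannian and the pole count for the coefficient bound \eqref{eq:kernel-coefficient-bound}. That bound rests on the factorization \eqref{eq:kernel-Polya-factorization} and the resulting pole-order estimate for the $c_\ell$. With the $q$-dependent rectangle, it also needs no largeness of $\tau$ beyond $\tau>n\log2/M$. The paper's route uses nothing about where the singularities of $V$ lie or how strong they are; it uses only that $V$ is a finite-dimensional subspace of $\C(z)$.
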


\begin{proof}
Put
\[
 a=e^{-\tau},\qquad N_j=2^j,\qquad
 r_j=e^{-\tau/N_j},\qquad
 C(w)=\frac{1-a^{-1}w}{1-aw},
\]
and define
\begin{equation}\label{eq:all-V-regular-product}
 P(z)=\prod_{j=1}^{\infty}C(z^{N_j}),
 \qquad h=P^{-M}.
\end{equation}
Write $N_P(r)=N(r,0,P)$.  Equations \eqref{eq:valiron-NP},
\eqref{eq:valiron-log-radius}, and \eqref{eq:valiron-product-mean} give,
uniformly for $r_j\leq r<r_{j+1}$,
\begin{gather}
 N_P(r)=\tau j+O_\tau(1),\qquad
 \log\frac1{1-r}=j\log2+O_\tau(1),                  \label{eq:all-V-regular-growth}\\
 \frac1{2\pi}\int_0^{2\pi}
 \left|\log|P(re^{i\theta})|-N_P(r)\right|\dd\theta
 =O_\tau(1).                                         \label{eq:all-V-P-mean}
\end{gather}
Consequently,
\begin{equation}\label{eq:all-V-h-data}
 \rho(h)=0,\qquad
 \lambda(h)=\alpha(h)=\frac{M\tau}{\log2},\qquad
 k(h)=\frac1M,\qquad
 \delta(0,h)=1.
\end{equation}
Since $h$ is transcendental and $\ker D_V=V\subset\C(z)$,
$D_Vh\not\equiv0$.

It remains to establish the uniform estimate
\begin{equation}\label{eq:all-V-DV-log-mean}
 \frac1{2\pi}\int_0^{2\pi}
 \left|\log\left|
 \frac{D_Vh(re^{i\theta})}{N_j^nh(re^{i\theta})}
 \right|
 \right|\dd\theta=O_{V,M,n,\tau}(1)
\end{equation}
for $r_j\leq r<r_{j+1}$.  We now prove this estimate.
Integrals through zeros or poles are understood in the improper sense.

We use the following local estimate.  Suppose that $\Omega$ contains
a closed neighbourhood of $i[0,2\pi]$, $N_\nu\to+\infty$, and
$A_\nu,B_\nu$ are holomorphic in $\Omega$ and converge locally uniformly to functions that are not identically zero.  For
\[
 \gamma_N(t)=N(e^{it/N}-1),\qquad 0\leq t\leq2\pi,
\]
one has
\begin{equation}\label{eq:local-log-fact}
 \int_0^{2\pi}
 \left|\log\left|
 \frac{A_\nu(\gamma_{N_\nu}(t))}
      {B_\nu(\gamma_{N_\nu}(t))}
 \right|\right|\dd t=O(1).
\end{equation}
Indeed, cover $i[0,2\pi]$ by finitely many discs whose concentric
enlargements are compactly contained in $\Omega$ and whose boundaries
contain no zero of the two limit functions.  Hurwitz's theorem and Jensen's formula factor $A_\nu$ and $B_\nu$, on the smaller discs, into a uniformly bounded number of linear factors and zero-free factors bounded above and
below.  The curves $\gamma_N$ are uniformly bi-Lipschitz and converge in
$C^1$ to $t\mapsto it$, so the logarithms of the linear factors have
uniformly bounded integrals.  This proves \eqref{eq:local-log-fact}.

Choose a nonzero polynomial $Q$ and an integer $d$ such that
\[
 U:=QV\subseteq\mathcal P_d,\qquad \deg Q\leq d,
\]
where $\mathcal P_d$ is the space of polynomials of degree at most $d$.
For a basis $p_1,\ldots,p_n$ of $U$, we have
\begin{equation}\label{eq:all-V-gauged-Wronskian}
 \frac{D_Vh}{h}
 =\frac{W(p_1,\ldots,p_n,Qh)}
 {W(p_1,\ldots,p_n)Qh}.
\end{equation}
Divide $|z|=r$ into $N_j$ equal arcs.  On the $k$th arc put
\[
 z_{j,k}=re^{2\pi i k/N_j},
 \qquad
 \phi_{j,k}(x)=z_{j,k}\left(1+\frac{x}{N_j}\right).
\]
Since $D_z=(N_j/z_{j,k})D_x$, equation
\eqref{eq:all-V-gauged-Wronskian} gives
\begin{equation}\label{eq:all-V-scaled-Wronskian}
 \left(\frac{z_{j,k}}{N_j}\right)^n
 \frac{D_Vh}{h}\bigl(\phi_{j,k}(x)\bigr)
 =\frac{W_x(P_{1,j,k},\ldots,P_{n,j,k},G_{j,k})}
 {W_x(P_{1,j,k},\ldots,P_{n,j,k})G_{j,k}},
\end{equation}
where $P_{\ell,j,k}=p_\ell\circ\phi_{j,k}$ and
$G_{j,k}=(Qh)\circ\phi_{j,k}$.

To obtain a bound independent of the arc, consider arbitrary sequences
$j_\nu\to\infty$, $k_\nu$, and
$r_{j_\nu}\leq r_\nu<r_{j_\nu+1}$.  Write
$N_\nu=N_{j_\nu}$ and $\phi_\nu=\phi_{j_\nu,k_\nu}$.  After passing to a
subsequence,
\[
 r_\nu^{N_\nu}\longrightarrow q\in[a,a^{1/2}].
\]
Put
\[
 U_\nu=\{p\circ\phi_\nu:p\in U\}.
\]
By compactness of the Grassmannian $\operatorname{Gr}(n,\mathcal P_d)$,
the sequence $(U_\nu)$ has a convergent subsequence.  Choose bases
$P_{1,\nu},\ldots,P_{n,\nu}$ converging coefficientwise to a basis
$P_1,\ldots,P_n$ of the limit space $U_\infty$.  After normalizing by
nonzero scalars and passing to a further subsequence, we may assume that
\[
 \widehat Q_\nu=\sigma_\nu Q\circ\phi_\nu
 \longrightarrow Q_\infty\not\equiv0
\]
in $\mathcal P_d$.  These changes do not alter the quotient in
\eqref{eq:all-V-scaled-Wronskian}.

For $\tau$ sufficiently large, take
\[
 \Omega=\left\{x:-\frac\tau2-2<\Real{x}<2,\ 
 |\Imag{x}|<2\pi(d+3)\right\}.
\]
For all large $\nu$, $\phi_\nu(\Omega)\subset\D$.  Define
\begin{equation}\label{eq:all-V-active-factor}
 S_\nu(x)
 =\left(1-a^{-1}\phi_\nu(x)^{N_\nu}\right)
  \left(1-a^{-1}\phi_\nu(x)^{2N_\nu}\right).
\end{equation}
The two factors have disjoint simple zeros, and they are exactly the
possible zero divisors of $P\circ\phi_\nu$ in $\Omega$ for all large
$\nu$.  Indeed, the rescaled real coordinate of a zero from level
$j_\nu+m$ is
\[
 -N_\nu\log r_\nu-\frac{\tau}{2^m}+o(1).
\]
The factors of $S_\nu^M$ cancel the poles of $h\circ\phi_\nu$ corresponding
to $m=0,1$.  The zeros from levels $j_\nu+m$ with $m\geq2$ lie to the right of $\Omega$, while those with $m\leq-1$ lie to the left.  Moreover,
\[
 S_\nu\longrightarrow
 S_\infty(x)=(1-a^{-1}qe^x)\left(1-a^{-1}(qe^x)^2\right)
\]
locally uniformly.

Put $H_\nu^0=S_\nu^M(h\circ\phi_\nu)$.  These functions are holomorphic and
zero-free in $\Omega$.  Choose a point $x_*$ outside the zero sets of
$S_\infty$ and $Q_\infty$, and choose $\kappa_\nu\ne0$ so that
$\kappa_\nu H_\nu^0(x_*)=1$.  Set
\[
 H_\nu=\kappa_\nu H_\nu^0,
 \qquad
 G_\nu=\kappa_\nu\widehat Q_\nu(h\circ\phi_\nu).
\]
This scalar normalization does not change the quotient in
\eqref{eq:all-V-scaled-Wronskian}.  Put $q_\nu=r_\nu^{N_\nu}$.  For
$-j_\nu+1\leq m<+\infty$, define
\[
 w_{\nu,m}(x)=\phi_\nu(x)^{2^{j_\nu+m}}.
\]
After this cancellation, $H_\nu^0$ has the exact decomposition
\[
 H_\nu^0
 =
 \prod_{p=1}^{j_\nu-1}C(w_{\nu,-p})^{-M}
 \prod_{\mu=0}^{1}(1-aw_{\nu,\mu})^M
 \prod_{m=2}^{\infty}C(w_{\nu,m})^{-M}.
\]
We also have, for $p\geq1$ and $m\geq0$, respectively,
\[
 \begin{aligned}
 w_{\nu,-p}(x)
 &=
 \eta_{\nu,p}q_\nu^{2^{-p}}
 \left(1+\frac{x}{N_\nu}\right)^{N_\nu2^{-p}},
 &\eta_{\nu,p}&=\exp\left(\frac{2\pi i k_\nu}{2^p}\right),\\
 w_{\nu,m}(x)
 &=
 q_\nu^{2^m}
 \left(1+\frac{x}{N_\nu}\right)^{N_\nu2^m}.
 \end{aligned}
\]
After passing to a diagonal subsequence, we may assume that $\eta_{\nu,p}$
converges for every fixed $p$.  Thus every fixed factor in this product
converges locally uniformly together with all its derivatives.

We next control the two infinite tails.  Choose $\tau_0$ sufficiently large
so that, for every $\tau\geq\tau_0$, every compact $K\Subset\Omega$, and all
sufficiently large $\nu$, the defining inequalities for $\Omega$ give
\[
 \begin{aligned}
 a^{-1}|w_{\nu,-p}(x)|&\geq e^{\tau/8},
 &a|w_{\nu,-p}(x)|&\leq e^{-\tau/2},
 &&1\leq p\leq j_\nu-1,\\
 a^{-1}|w_{\nu,m}(x)|&\leq e^{-\tau2^m/8},
 &&&&m\geq2,
 \end{aligned}
 \qquad x\in K.
\]
Moreover,
\[
 \frac{w_{\nu,m}'(x)}{w_{\nu,m}(x)}
 =\frac{2^m}{1+x/N_\nu}
\]
for every integer $m\geq-j_\nu+1$, and
\[
 \frac{\dd}{\dd x}\log\bigl(C(w)^{-M}\bigr)
 =
 Mw'\left(
 \frac{a^{-1}}{1-a^{-1}w}
 -\frac{a}{1-aw}
 \right).
\]
It follows that there is a constant $C_K$ such that
\begin{align*}
 \left|
 \frac{\dd}{\dd x}\log C(w_{\nu,-p})^{-M}
 \right|
 &\leq C_K2^{-p},
 &&1\leq p\leq j_\nu-1,\\
 \left|
 \frac{\dd}{\dd x}\log C(w_{\nu,m})^{-M}
 \right|
 &\leq C_K2^m e^{-\tau2^m/8},
 &&m\geq2.
\end{align*}
The logarithmic derivatives of the two middle factors also converge
locally uniformly.

Since multiplication by $\kappa_\nu$ does not change a logarithmic
derivative, the two preceding estimates and the Weierstrass $M$-test show that
\[
 L_\nu:=\frac{H_\nu'}{H_\nu}
 \longrightarrow L_\infty
\]
locally uniformly in $\Omega$.  The rectangle $\Omega$ is simply connected and $H_\nu(x_*)=1$.  Hence
\[
 H_\nu(x)
 =\exp\left(\int_{x_*}^{x}L_\nu(\zeta)\dd\zeta\right)
 \longrightarrow
 H_\infty(x):=
 \exp\left(\int_{x_*}^{x}L_\infty(\zeta)\dd\zeta\right)
\]
locally uniformly in $\Omega$.  In particular,
$H_\infty(x_*)=1$ and $H_\infty$ is zero-free.  Cauchy's estimates also
give
\[
 H_\nu^{(s)}\longrightarrow H_\infty^{(s)}
\]
locally uniformly for every $s\geq0$.

The functions
\[
 \widetilde G_\nu=S_\nu^M G_\nu=\widehat Q_\nu H_\nu
\]
converge locally uniformly to
$\widetilde G_\infty=Q_\infty H_\infty\not\equiv0$.  Hence
$G_\infty=S_\infty^{-M}\widetilde G_\infty$ is the meromorphic limit of
$G_\nu$.  The first factor of $S_\infty$ vanishes at
\[
 x=\log(a/q)+2\pi i m,\qquad m\in\mathbb Z.
\]
The rectangle $\Omega$ contains more than $d$ of these points.  The other
circular factors and $H_\infty$ are nonzero there, while the polynomial
$Q_\infty$ can cancel at most $d$ of them.  Thus $G_\infty$ has a pole and does not belong to $U_\infty$.

Define holomorphic functions on $\Omega$ by
\begin{align*}
 \mathcal A_\nu
 &=S_\nu^{M+n}
 W_x(P_{1,\nu},\ldots,P_{n,\nu},G_\nu),\\
 \mathcal B_\nu
 &=S_\nu^n W_x(P_{1,\nu},\ldots,P_{n,\nu})
 \widetilde G_\nu.
\end{align*}
For $0\leq s\leq n$, the expression
\[
 S_\nu^{M+n}D_x^s
 \bigl(S_\nu^{-M}\widetilde G_\nu\bigr)
\]
is a holomorphic differential polynomial in the jets of $S_\nu$ and
$\widetilde G_\nu$.  Hence $\mathcal A_\nu$ and $\mathcal B_\nu$ converge
locally uniformly, and
\[
 \frac{W_x(P_{1,\nu},\ldots,P_{n,\nu},G_\nu)}
 {W_x(P_{1,\nu},\ldots,P_{n,\nu})G_\nu}
 =\frac{\mathcal A_\nu}{\mathcal B_\nu}.
\]
The Wronskian criterion, applied on $\Omega$, shows that the limit of
$\mathcal A_\nu$ is not identically zero because
$G_\infty\notin U_\infty$.  The limit of
$\mathcal B_\nu$ is not identically zero because the limiting polynomial
basis is independent and neither $S_\infty$ nor
$\widetilde G_\infty$ is the zero function.  The local estimate
\eqref{eq:local-log-fact} now bounds the integral of the absolute logarithm on the rescaled arc.

If \eqref{eq:all-V-DV-log-mean} had no uniform bound, we could choose a
sequence of arcs whose integrals tend to $+\infty$.  Applying the preceding
subsequence argument to this sequence would produce a subsequence with
bounded integrals, a contradiction.  On the $k$th arc the
original angular variable is represented by
\[
 x=\gamma_{N_j}(t)=N_j(e^{it/N_j}-1),
 \qquad 0\leq t\leq2\pi,
\]
and $\dd\theta=\dd t/N_j$.  Thus each arc contributes $O(1/N_j)$.
Summing over the $N_j$ arcs in
\eqref{eq:all-V-scaled-Wronskian}, and using
$\bigl|\log|z_{j,k}|\bigr|=O(1/N_j)$, proves
\eqref{eq:all-V-DV-log-mean}.

We now finish the calculation.  Equations
\eqref{eq:all-V-P-mean} and \eqref{eq:all-V-DV-log-mean} give
\begin{equation}\label{eq:all-V-DV-central}
 \frac1{2\pi}\int_0^{2\pi}
 \left|\log|D_Vh(re^{i\theta})|
 +M N_P(r)-n\log N_j
 \right|\dd\theta=O(1)
\end{equation}
uniformly for $r_j\leq r<r_{j+1}$.  By
\eqref{eq:all-V-regular-growth},
\[
 -M N_P(r)+n\log N_j
 =-j(M\tau-n\log2)+O(1)\longrightarrow-\infty.
\]
Therefore
\[
 m(r,\infty,D_Vh)=O(1).
\]
At all but finitely many zeros of $P$, $D_Vh$ has a pole of order $M+n$.
Hence
\[
 N(r,\infty,D_Vh)=(M+n)N_P(r)+O(1).
\]
Averaging the expression inside the absolute value in
\eqref{eq:all-V-DV-central} now gives
\[
 T(r,D_Vh)=(M+n)N_P(r)+O(1),
\qquad
 m(r,0,D_Vh)=M N_P(r)-n\log N_j+O(1).
\]
Together with \eqref{eq:all-V-regular-growth}, these formulas yield
\begin{equation}\label{eq:all-V-regular-deficiency}
 \delta(0,D_Vh)=\Delta(0,D_Vh)
 =\frac{M\tau-n\log2}{(M+n)\tau}.
\end{equation}

Set $f=a_0+h$.  Adding the rational function $a_0$ preserves $\rho$,
$\lambda$, $\alpha$, and $k$, and gives $\delta(a_0,f)=1$, while
$a_0\in V=\ker D_V$ gives $D_Vf=D_Vh$.  Hence
\eqref{eq:all-V-h-data} and \eqref{eq:all-V-regular-deficiency} prove
\eqref{eq:all-V-Valiron-data} and
\eqref{eq:all-V-Valiron-deficiency}.  The operator $D_Vf$ has infinitely
many poles and therefore has eventually positive characteristic.  Finally,
\[
 \Delta(0,D_Vf)\left(1+\frac nM\right)
 +\frac n{\alpha(f)}
 =\frac{M\tau-n\log2}{M\tau}
  +\frac{n\log2}{M\tau}=1.
\]
This proves equality in
\eqref{eq:valiron-rational-target-extension}.  Since $\rho(f)=0$ and
$\lambda(f)=\alpha(f)$, the same calculation proves equality in
\eqref{eq:rational-target-extension}.  Removing $n/\alpha(f)$ leaves the
strictly smaller value $1-n\log2/(M\tau)$.
\end{proof}


\begin{thebibliography}{99}

\bibitem{BrannanHayman1989}
D. A. Brannan and W. K. Hayman,
\emph{Research problems in complex analysis},
Bull. London Math. Soc. \textbf{21} (1989), no.~1, 1--35.

\bibitem{CherryYe2001}
W. Cherry and Z. Ye,
\emph{Nevanlinna's Theory of Value Distribution: The Second Main Theorem and Its Error Terms},
Springer Monographs in Mathematics, Springer, Berlin, 2001.

\bibitem{Ciechanowicz2016}
E. Ciechanowicz,
\emph{Defective functions of meromorphic functions in the unit disc},
J. Appl. Anal. \textbf{22} (2016), no.~1, 15--26.

\bibitem{FrankWeissenborn1986}
G. Frank and G. Weissenborn,
\emph{Rational deficient functions of meromorphic functions},
Bull. London Math. Soc. \textbf{18} (1986), no.~1, 29--33.

\bibitem{HaymanLingham2019}
W. K. Hayman and E. F. Lingham,
\emph{Research Problems in Function Theory: Fiftieth Anniversary Edition},
Problem Books in Mathematics, Springer, Cham, 2019.

\bibitem{Miles1992}
J. Miles,
\emph{A sharp form of the lemma on the logarithmic derivative},
J. London Math. Soc. (2) \textbf{45} (1992), no.~2, 243--254.

\bibitem{PolyaSzego1998}
G. P\'olya and G. Szeg\H{o},
\emph{Problems and Theorems in Analysis II: Theory of Functions, Zeros,
Polynomials, Determinants, Number Theory, Geometry},
translated by C. E. Billigheimer, Classics in Mathematics, Springer, Berlin,
1998, reprint of the 1976 edition.

\bibitem{SheaSons1986}
D. F. Shea and L. R. Sons,
\emph{Value distribution theory for meromorphic functions of slow growth in the disk},
Houston J. Math. \textbf{12} (1986), no.~2, 249--266.

\bibitem{Sons1983}
L. R. Sons,
\emph{Unbounded functions in the unit disc},
Internat. J. Math. Math. Sci. \textbf{6} (1983), no.~2, 201--242.

\bibitem{Steinmetz1986}
N. Steinmetz,
\emph{Eine Verallgemeinerung des zweiten Nevanlinnaschen Hauptsatzes},
J. Reine Angew. Math. \textbf{368} (1986), 134--141.

\bibitem{Tsuji1959}
M. Tsuji,
\emph{Potential Theory in Modern Function Theory},
Maruzen, Tokyo, 1959.

\bibitem{Ullrich1929}
E. Ullrich,
\emph{\"Uber die Ableitung einer meromorphen Funktion},
Sitzungsber. Preuss. Akad. Wiss. Phys.-Math. Kl. (1929), 592--608.

\bibitem{Yamanoi2004}
K. Yamanoi,
\emph{The second main theorem for small functions and related problems},
Acta Math. \textbf{192} (2004), no.~2, 225--294.

\bibitem{Yamanoi2005}
K. Yamanoi,
\emph{Defect relation for rational functions as targets},
Forum Math. \textbf{17} (2005), no.~2, 169--189.

\end{thebibliography}
\end{document}